\renewcommand{\thefootnote}{\fnsymbol{footnote}}
\newtheorem{theorem}{Theorem}[section]
\newtheorem{corollary}[theorem]{Corollary}
\newtheorem{lemma}[theorem]{Lemma}
\newtheorem{example}[theorem]{Example}
\newtheorem{remark}[theorem]{Remark}
\theoremstyle{definition}
\newtheorem{definition}[theorem]{Definition}
\newtheoremstyle{case}{}{}{}{}{}{:}{ }{}
\theoremstyle{case}
\DeclareMathOperator*{\Plim}{\mathbb{P}\text{-}lim}
\providecommand{\keywords}[1]
{
	\small	
	\textbf{\textit{Keywords---}} #1
}
\begin{document}

\title{It\^o Stochastic Differentials}
\author{
	John Armstrong\footnotemark[2] \and Andrei Ionescu\footnotemark[2]}
\date{}
\maketitle

\begin{NoHyper}
\footnotetext{\footnotemark[2]King's College London}
\end{NoHyper}

\begin{abstract}
	We give an infinitesimal meaning to the symbol $dX_t$ for a continuous semimartingale $X$ at an instant in time $t$. We define a vector space structure on the space of differentials at time $t$ and deduce key properties consistent with the classical It\^o integration theory. In particular, we link our notion of a differential with It\^o integration via a stochastic version of the Fundamental Theorem of Calculus. Our differentials obey a version of the chain rule, which is a local version of It\^o's lemma. We apply our results to financial mathematics to give a theory of portfolios at an instant in time. 
\end{abstract}

\keywords{It\^o integration; differentials in stochastic analysis; quadratic variation; continuous semimartingales.}


\renewcommand{\thefootnote}{\arabic{footnote}}

\section{Introduction} \label{section: introduction}
It\^o first defined a stochastic integral with respect to Brownian motion in 1944 \cite{ito1944}. This integral has since been extended to include a larger class of processes as integrators. Semimartingales now form the largest class of processes with respect to which the It\^o integral is defined (see for instance \cite{legall2016martingales,protter2005stochasticintegration}). The resulting body of research that uses the notion of this stochastic integral is called It\^o calculus.

Symbolically however, the majority of It\^o calculus uses \emph{differential} notation to study stochastic \emph{integral} equations. This convention has been so convenient in fact that it has become commonplace to simply call these \emph{stochastic differential equations} (SDEs). We do this and yet, there are no proper definitions of differentials in It\^o calculus. The absence of such a definition certainly did not hinder the development of It\^o calculus which has far-reaching applications to the fields of probability theory, stochastic analysis, differential geometry and mathematical finance. Nevertheless, the very same mathematicians that developed the notion of the It\^o integral to what it has become today have also pondered the existence of its differential counterpart. Laurent Schwartz admitted in \cite{schwartz1986grosProduitsTensoriels} that there is nothing "\emph{ponctuel}" (instantaneous) in the object we call $dX_t$. Schwartz developed a theory of SDEs on manifolds with the belief that the differential would one day be defined rigorously. This is the theory of second order tangent vectors and Schwartz morphisms \cite{emery1989,emery2007}. Michel Emery, who himself had been Paul-Andr{\'e} Meyer’s doctoral student, stated in \cite{emery1989} that the “existence of the [stochastic differential] is metaphysical and one is free not to believe in it.” 

In our work we present an explicit definition of the It\^o stochastic differential. Its existence is no longer metaphysical and one is now bound to believe it in as much as one believes in the It\^o integral. We denote our differential as $d^p(X)_t$ to distinguish it from the usual notation. We define this differential for a large subspace of continuous semimartingales, including It\^o processes, and do so in a local and intrinsic way. This differential is completely compatible with the theory of It\^o integration in much the same way that differentiation and (Riemann) integration are related via the Fundamental Theorem of Calculus. In fact, we prove a stochastic version of the FTC.

Naturally, one may wonder if an explicit definition of the differential has a consequence on the theory of It\^o calculus beyond just providing a sense of completeness. After all, the differential has been used \emph{implicitly} via the definition of It\^o integration. We demonstrate that our differential has applications which its integral counterpart cannot offer on its own. This is precisely because $d^p(X)_t$ has properties one should come to expect from a differential, one of which is that it must be defined locally in time. The It\^o integral, by its very nature, cannot be locally defined in time. Using a well-defined differential, we can now interpret SDEs at a single point in time, which we could not do using It\^o integration. Moreover, we define $d^p(X )_t$ intrinsically for $\mathbb{R}^n$-valued processes.

We outline financial applications using the probabilistic and \lq forward-looking\rq\space  perspective of our theory. More precisely, we give a rigorous interpretation of continuous-time financial concepts at an moment in time. For instance, the value of a portfolio $\{\Pi_t\}_{t\geq 0}$ will be \lq instantaneously hedged\rq\space at time $t$ if and only if it has the same dynamics as the risk-free bank account at that time. Equivalently in the language of differentials, $\{\Pi_t\}_{t\geq 0}$ must satisfy $d\Pi_t = r\Pi_t dt$. It is important to understand that this statement requires the definition of a differential. Then we define the concept of an instantaneous portfolio in a complete market solely using invariants of the latter, without the need to explicitly mention assets. We show how this framework gives rise to infinitesimal market portfolio theory and prove a one-mutual fund theorem.

To the extent of our knowledge, this is the first time a stochastic differential at an instant in time has been rigorously defined. There are however several notions of derivatives in stochastic analysis. The Malliavin calculus \cite{nualart2006malliavinCalculus} is used to take derivatives of random variables with respect to paths in the classical Wiener space and is used to give an explicit form to the $W$-derivative of a random variable in the Martingale Representation theorem. Likewise, Allouba and Fontes \cite{allouba2006differentiationtheory} provide a theory of pathwise derivatives of semimartingales with respect to Brownian motion. The main result of their work is a stochastic version of the Fundamental Theorem of Calculus however this is a derivative with respect to a Brownian motion and not with respect to time. In the theory of rough paths, one removes the probability measure from stochastic integration by focusing instead on the regularity properties of typical paths. The Gubinelli derivative \cite{gubinelli2003controlling} essentially allows one to find a first order Taylor approximation of $\alpha$-H\"older paths with respect to other $\alpha$-H\"older paths. This is close in spirit to what we have achieved but differs as we work in the probabilistic setting. Functional Itô calculus is a theory developed by Dupire \cite{dupire2009functionalitocalculus}, and later Cont and Fournié \cite{cont2013functionalitocalculus}, that extends It\^o calculus to functionals of Itô processes. Functional It\^o calculus introduces the Dupire horizontal and vertical derivatives of functionals which respectively act as time and space derivatives. It has links with Malliavin calculus and rough path theory (see \cite{cont2019pathwiseIntegration}). Functionals of paths consider the whole history of paths and as such are not a local notion which is why we do not consider them here. Differentiation techniques in Malliavin calculus have successfully been applied to stochastic differential equations driven by fractional noise in \cite{NUALART2009391}. We believe that the differential approach in this paper could adequately be adapted to also study stochastic dynamics with fractional noise.

We define $d^p(X)_t$ using convergence in probability. We also consider two alternative definitions for the stochastic differential: the first using convergence in expectation, and the second using almost-sure convergence. However, the results in both the latter approaches are less satisfactory and our definition using convergence in probability yields all the properties one hopes for from a stochastic differential. We still include the expectation and almost-sure convergence approaches here to compare them to the convergence in probability approach. The expectation approach is similar to Nelson's idea of a `mean-forward derivative' in \cite{nelson1967bm}.

This paper is set out as follows: after some preliminary definitions in Section \ref{section: preliminaries}, in Section \ref{section: stochastic differentials using convergence in probability} we introduce the infinitesimal differential $d^p(X)_t$ on $\mathbb{R}^n$ using convergence in probability and we state the main results. In particular we show that any stochastic differential equation, in the classical sense of the term, corresponds naturally with its analogue written using the $d^p(\cdot)$ formalism. In Section \ref{section: financial applications} we describe financial applications in continuous-time complete markets. In Section \ref{section: stochastic differentials using convergence in mean} we introduce a version of the differential using convergence in mean. The relationships with $d^p(X)_t$ are outlined and analogous results to Section \ref{section: stochastic differentials using convergence in probability} are presented. Finally in Section \ref{section: stochastic differentials using almost sure convergence} we give a third definition of the differential, $d^\text{a.s.}(X)_t$ using almost sure convergence and an analogue to the Lebesgue differentiation theorem is proved.

To clarify our contribution, we provide a reference to all definitions and results we use from other sources and remark where we could not find a proof in the literature, but do not believe the result is new. If we simply state a definition or a result, we believe it is our own.

\section{Preliminaries}\label{section: preliminaries}
\subsection{Asymptotic notation in probability}

Just as in deterministic calculus, we consider the limits of difference quotients to define $dX_t$. These limits are taken in probability and we now introduce some definitions relating asymptotic notation (also commonly known as ``little $o$ and big $\mathcal{O}$'' notation) to convergence in probability. All the definitions and results of this section, unless followed by a proof, are taken from \cite{bishop2007multivariateAnalysis} and \cite{kallenberg2001foundationsmodernprobability}.

\begin{definition}[Convergence in probability \cite{bishop2007multivariateAnalysis}]
Let $\{X_n\}_{n\in\mathds{N}}$ be a sequence of random variables. One writes $X_n = o_p(1_n)$ if $\{X_n\}_{n\in\mathds{N}}$ converges to $0$ in probability. If $\{a_n\}_{n\in\mathds{N}}$ is a sequence of real constants, ones writes $X_n = o_p(a_n) \Longleftrightarrow \frac{X_n}{a_n} =o_p(1_n)$.
\end{definition}

\begin{definition}[Stochastic boundedness \cite{bishop2007multivariateAnalysis}]
$\{X_n\}_{n\in\mathds{N}}$ is said to be \emph{bounded in probability} (or \emph{uniformly tight}) if $\forall \epsilon>0$, $\exists K>0$ and $\exists N\in\mathds{N}$ such that
\begin{equation*}
\mathds{P}\big(|X_n|>K\big)<\epsilon,~\forall n\geq N.
\end{equation*}
If this is the case, one writes $X_n = \mathcal{O}_p(1_n)$. If $\{a_n\}_{n\in\mathds{N}}$ is a sequence of real constants, ones writes $X_n = \mathcal{O}_p(a_n) \iff \frac{X_n}{a_n} = \mathcal{O}_p(1_n)$.
\end{definition}

We adapt the previous two definitions to continuous-time processe so that we can apply them to define our stochastic differentials later.
\begin{definition}[Asymptotic notation in probability for continuous-time processes]
	Let $\{X_t\}_{t\geq 0}$ be a continuous-time stochastic process and $f:\mathbb{R}_{\geq 0} \rightarrow \mathbb{R}_{\geq 0}$ be an increasing function. We write $X_h = o_p\left(f(h)\right)$ if for all $\epsilon,\delta >0$, there exists $\eta>0$ such that
	\begin{equation*}
		\mathds{P}\left(\left\lvert\frac{X_h}{f(h)}\right\rvert>\epsilon\right) < \delta,
	\end{equation*}
	for all $h\in(0,\eta)$. Similarly, we write that $X_h = \mathcal{O}_p\left(f(h)\right)$ if for all $\epsilon>0$, there exists $K>0$ and $\eta>0$ such that 
	\begin{equation*}
		\mathds{P}\left(\left\lvert\frac{X_h}{f(h)}\right\rvert>K\right)<\epsilon,
	\end{equation*}
	for all $h\in(0,\eta)$. If $\{X_t\}_{t\geq 0}$ depends on some other parameters then we do not require uniformity.
\end{definition}

\begin{lemma}\label{lemma: properties of little and big o nonation in probability} $o_p(\cdot)$ and $\mathcal{O}_p(\cdot)$ obey the same properties as do $o(\cdot)$ and $\mathcal{O}(\cdot)$. For sequences of reals $\{a_n\}$ and $\{b_n\}$,
	\begin{align*}
	o_p(a_n)+o_p(b_n) &= o_p\left(\max\{a_n,b_n\}\right)         &  o_p(1_n)+\mathcal{O}_p(1_n) &=\mathcal{O}_p(1_n)               \\
	o_p(a_n) o_p(b_n)&=o_p(a_nb_n)      &  o_p(a_n) \mathcal{O}_p(b_n) &=  o_p(a_nb_n)     
	\end{align*}
\end{lemma}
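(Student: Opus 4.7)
The plan is to verify each of the four identities by direct $\epsilon$-$\delta$ arguments from the definitions of convergence in probability and stochastic boundedness, using a union bound as the main combinatorial device. Since the statements are known for discrete-index sequences (as cited), the task is really to write out the basic template; the continuous-time version follows by replacing indices $n$ with positive reals $h$ tending to $0$.

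First, for $o_p(a_n)+o_p(b_n)=o_p(\max\{a_n,b_n\})$, write $X_n = o_p(a_n)$, $Y_n=o_p(b_n)$ and use the crude bound
\[
\left|\frac{X_n+Y_n}{\max\{a_n,b_n\}}\right| \;\leq\; \left|\frac{X_n}{a_n}\right| + \left|\frac{Y_n}{b_n}\right|.
\]
A union bound then gives
\[
\mathds{P}\!\left(\left|\tfrac{X_n+Y_n}{\max\{a_n,b_n\}}\right|>\epsilon\right) \leq \mathds{P}\!\left(\left|\tfrac{X_n}{a_n}\right|>\tfrac{\epsilon}{2}\right)+\mathds{P}\!\left(\left|\tfrac{Y_n}{b_n}\right|>\tfrac{\epsilon}{2}\right),
\]
and both terms vanish. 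An analogous bound handles $o_p(1_n)+\mathcal{O}_p(1_n)=\mathcal{O}_p(1_n)$: fix $\epsilon>0$, pick $K,N$ witnessing tightness of the $\mathcal{O}_p$-term, and use that the $o_p$-term is eventually less than $1$ in probability, so $|X_n+Y_n|\leq K+1$ with probability at least $1-\epsilon$.

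For the product statements, I would again use a union bound through a symmetric splitting. For $o_p(a_n)o_p(b_n)=o_p(a_nb_n)$, bound
\[
\mathds{P}\!\left(\left|\tfrac{X_nY_n}{a_nb_n}\right|>\epsilon\right) \leq \mathds{P}\!\left(\left|\tfrac{X_n}{a_n}\right|>\sqrt{\epsilon}\right)+\mathds{P}\!\left(\left|\tfrac{Y_n}{b_n}\right|>\sqrt{\epsilon}\right).
\]
For $o_p(a_n)\mathcal{O}_p(b_n)=o_p(a_nb_n)$, given $\epsilon,\delta>0$, use tightness to choose $K$ so that $\mathds{P}(|Y_n/b_n|>K)<\delta/2$ eventually, then
\[
\mathds{P}\!\left(\left|\tfrac{X_nY_n}{a_nb_n}\right|>\epsilon\right) \leq \mathds{P}\!\left(\left|\tfrac{X_n}{a_n}\right|>\tfrac{\epsilon}{K}\right)+\mathds{P}\!\left(\left|\tfrac{Y_n}{b_n}\right|>K\right),
\]
and the first summand is eventually below $\delta/2$ by the $o_p$-property.

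The only nontrivial adaptation is the passage to continuous time: one must verify that the definition (which quantifies over all $h$ in some neighbourhood $(0,\eta)$) still interacts cleanly with these union bounds. This is where I expect the mildest technical friction, since the intersection of the two \lq eventual\rq\ neighbourhoods must be taken; but because both definitions already provide a single $\eta$ for each $(\epsilon,\delta)$ pair, intersecting the two $\eta$'s and applying the same union bound pointwise in $h$ works verbatim. No other obstacle arises, which is why the authors are content to cite the discrete results and note that the proofs \lq still hold\rq\ after adaptation.
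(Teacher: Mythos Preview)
Your proposal is correct and is precisely the standard $\epsilon$--$\delta$/union-bound argument one finds in the references the paper cites; the paper itself does not give a proof of this lemma but simply refers to \cite{bishop2007multivariateAnalysis} and \cite{kallenberg2001foundationsmodernprobability} for the discrete case and remarks that the continuous-time versions follow by adapting those proofs. Your write-up supplies exactly that adaptation, so there is nothing to compare beyond noting that you have filled in what the authors left to citation.
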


The following facts will be used throughout proofs of our main results so we state them here and refer to them when needed.

\begin{lemma}
	Any $\mathds{R}^d$ valued random variable is bounded in probability. That is, if $Z:\Omega \rightarrow \mathds{R}^d$ is a random variable, then $Z = \mathcal{O}_p(1_n)$.
\end{lemma}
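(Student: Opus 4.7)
The plan is to reduce the claim to the standard fact that a finite-valued random variable on a probability space has a tight law. Since $Z:\Omega\to\mathds{R}^d$ takes values in $\mathds{R}^d$ (not the extended reals), we have $|Z|<\infty$ almost surely, so the events $A_K:=\{|Z|>K\}$ form a decreasing family in $K$ whose intersection $\bigcap_{K\in\mathds{N}}A_K=\{|Z|=\infty\}$ is a $\mathds{P}$-null set.

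First I would apply continuity of probability from above to this decreasing family, giving $\lim_{K\to\infty}\mathds{P}(|Z|>K)=\mathds{P}(|Z|=\infty)=0$. Then, given any $\epsilon>0$, I would choose $K=K(\epsilon)>0$ large enough that $\mathds{P}(|Z|>K)<\epsilon$. Interpreting $Z$ as the constant sequence $X_n\equiv Z$ (which is presumably the sense in which $Z=\mathcal{O}_p(1_n)$ is meant), this choice of $K$ works uniformly in $n$ with $N=1$, so the definition of stochastic boundedness is satisfied.

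There is essentially no obstacle here beyond unpacking the definition; the only mild subtlety is the slight notational clash between the random variable $Z$ and the sequence formalism of the definition of $\mathcal{O}_p(1_n)$, which is resolved by viewing $Z$ as a constant sequence. The same argument (indeed, word for word, replacing the integer index $n$ by a continuous parameter $h\searrow 0$) also shows that any $\mathds{R}^d$-valued process evaluated at a fixed time, or any family that is almost surely finite, is $\mathcal{O}_p$ in the continuous-time sense of the third definition above.
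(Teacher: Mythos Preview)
Your argument is correct. The paper itself does not supply a proof of this lemma; it is simply stated as a fact to be used later (the surrounding text says ``we state them here and refer to them when needed''), so there is nothing to compare against beyond noting that your proof is the standard tightness argument the authors presumably had in mind.
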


\begin{remark}
	Most of our definitions deal with increments of processes so to lighten the notation, we set $H_{t,s} \coloneqq H_s -H_t$ for any times $0\leq t\leq s$.
\end{remark}

\begin{lemma}\label{lemma: sup of continuous process is op1}
	If $H:[0,T]\times\Omega\rightarrow \mathds{R}^d$ is a stochastic process with sample paths that are continuous with probability $1$ at $t\in[0,T]$, then $\sup_{s\in(t,t+h)}\lVert H_{t,s}\rVert = o_p(1_h)$ and $\sup_{s\in(t,t+h)}\lVert H_s\rVert = \mathcal{O}_p(1_h)$.
\end{lemma}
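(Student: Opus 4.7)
The plan is to introduce the monotone auxiliary quantity $M_h(\omega) \coloneqq \sup_{s\in(t,t+h)}\lVert H_s(\omega)-H_t(\omega)\rVert$ and exploit its monotonicity in $h$ to translate the pathwise continuity hypothesis into the uniform-in-$h$ statement required by the definition of $o_p(1_h)$ for continuous-time processes.

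First I would observe that $M_h$ is nondecreasing in $h$: for $0<h_1<h_2$ the interval $(t,t+h_1)$ sits inside $(t,t+h_2)$, so $M_{h_1}\le M_{h_2}$. Next, for any $\omega$ at which the sample path of $H$ is continuous at $t$, the standard deterministic fact that continuous maps have oscillation tending to $0$ at a point yields $M_h(\omega)\searrow 0$ as $h\searrow 0$. Since this holds on a set of probability one, $M_h\to 0$ almost surely, hence $\{M_h>\epsilon\}$ decreases (in $h\searrow 0$) to a $\mathds{P}$-null event. By continuity from above of the probability measure, $\mathds{P}(M_h>\epsilon)\to 0$ as $h\searrow 0$, so for every $\epsilon,\delta>0$ there is $\eta>0$ with $\mathds{P}(M_\eta>\epsilon)<\delta$. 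Monotonicity in $h$ then gives $\mathds{P}(M_h>\epsilon)\le \mathds{P}(M_\eta>\epsilon)<\delta$ for all $h\in(0,\eta)$, which is exactly the definition of $M_h=o_p(1_h)$, i.e. $\sup_{s\in(t,t+h)}\lVert H_{t,s}\rVert=o_p(1_h)$.

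For the second assertion I would use the triangle inequality
\begin{equation*}
\sup_{s\in(t,t+h)}\lVert H_s\rVert \;\le\; \lVert H_t\rVert \;+\; \sup_{s\in(t,t+h)}\lVert H_s-H_t\rVert \;=\; \lVert H_t\rVert + M_h.
\end{equation*}
The random variable $\lVert H_t\rVert$ is bounded in probability by the preceding lemma, i.e.\ $\lVert H_t\rVert=\mathcal{O}_p(1_h)$ (trivially, with no $h$-dependence), and the first part of the lemma together with the fact that $o_p(1_h)\subset \mathcal{O}_p(1_h)$ gives $M_h=\mathcal{O}_p(1_h)$. Adding two $\mathcal{O}_p(1_h)$ quantities via the asymptotic arithmetic in Lemma \ref{lemma: properties of little and big o nonation in probability} yields $\sup_{s\in(t,t+h)}\lVert H_s\rVert=\mathcal{O}_p(1_h)$.

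The only mildly delicate step is the passage from "$M_h\to 0$ almost surely along the one-parameter family" to "$\mathds{P}(M_h>\epsilon)$ is small uniformly on $(0,\eta)$", because the asymptotic notation is formulated for continuous $h$ rather than along sequences. The monotonicity of $M_h$ in $h$ is what makes this automatic, so there is no genuine obstacle; the rest is bookkeeping with the asymptotic symbols.
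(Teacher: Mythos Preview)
Your proof is correct and follows essentially the same route as the paper: deduce almost sure convergence of the running oscillation from pathwise continuity, pass to convergence in probability, and then use the triangle inequality together with $\lVert H_t\rVert=\mathcal{O}_p(1_h)$ for the second claim. The only difference is that the paper compresses the first step into the single sentence ``almost sure convergence implies convergence in probability'', whereas you make the passage to the continuous-$h$ definition of $o_p(1_h)$ explicit via the monotonicity of $M_h$; this extra care is justified but does not change the substance of the argument.
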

Note that we have not specified the norm here. As all norms are equivalent on $\mathbb{R}^d$, this result holds for any arbitrary choice of norm. 
\begin{proof}
	By hypothesis we have
	\begin{equation*}
	\mathds{P}\Big(\big\{\omega\in\Omega: \lim\limits_{s\rightarrow t}\lVert H_{t,s}(\omega)\rVert=0 \big\} \Big)=1.
	\end{equation*}
	This implies that $\sup_{s\in(t,t+h)}\Vert H_{t,s}\rVert$ converges to $0$ on almost all sample paths as $h\rightarrow 0$. As almost sure convergence implies convergence in probability, $\sup_{s\in(t,t+h)}\lVert H_{t,s}\rVert=o_p(1_h)$. Then, since $H_t=  \mathcal{O}_p(1_h)$,
	\begin{equation*}
	\sup_{s\in(t,t+h)}\lVert H_s\rVert\leq \sup_{s\in(t,t+h)}\lVert H_{t,s}\rVert+\lVert H_t\rVert = o_p(1_h)+\mathcal{O}_p(1_h)=\mathcal{O}_p(1_h).
	\end{equation*}
\end{proof}

\section{It\^o stochastic differentials using convergence in probability} \label{section: stochastic differentials using convergence in probability}
\subsection{Motivation: visualisation of a differential for diffusion processes}
Ordinary differential equations may be written as integral equations, but the differential form is often more intuitively appealing as one can easily visualise a tangent vector. To visualise the differential of a process, consider 
\begin{equation}
	X_t = W_t^2 - t +t^2 \label{eq: numerical example},
\end{equation} 
which satisfies 
\begin{equation*}
	dX_t = 2W_tdW_t + 2tdt.
\end{equation*}
Since the coefficients of $dW_t$ and of $dt$ vanish at time $0$, we expect the differential $dX_t$ to vanish at time $0$.

In Figure \ref{fig: process} we plot a sample path (jagged blue), the mean (black) and a fan diagram (dotted green) showing the $5^\text{th}$ and $95^\text{th}$ percentiles of (\ref{eq: numerical example}). As a contrast, we plot similar features for the process $W_t$ in Figure \ref{fig: brownian motion} and for $W_t^2$ in Figure \ref{fig: squared brownian motion} which should not have a differential zero.

\begin{figure}[H]
	\centering
	\subfigure[\label{fig: process}$X_t = W_t^2 -t +t^2$]{\includegraphics[width=0.32\textwidth]{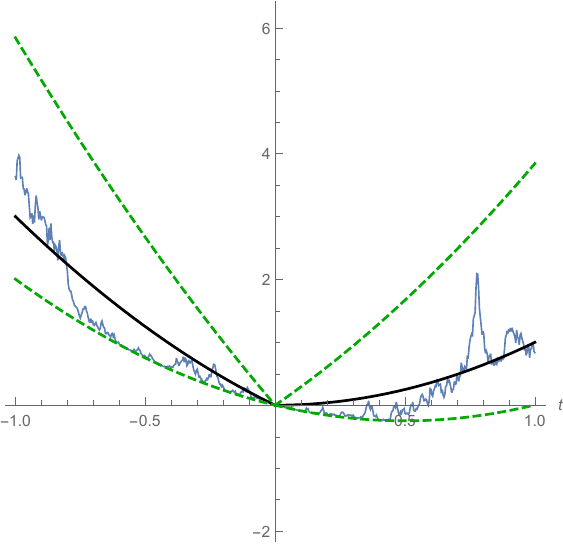}} 
	\subfigure[\label{fig: brownian motion}$W_t$]{\includegraphics[width=0.32\textwidth]{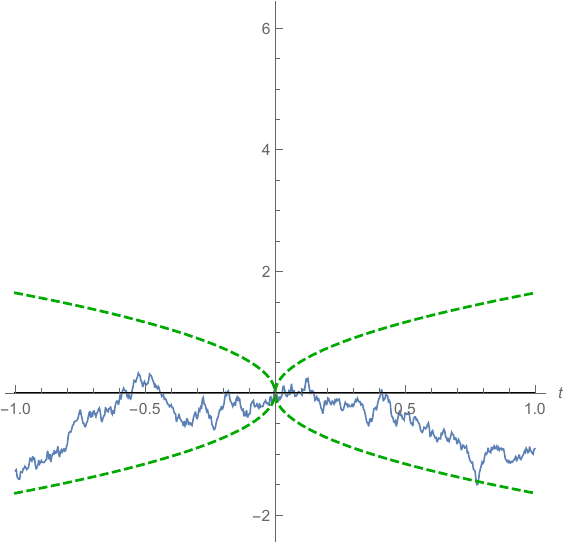}} 
	\subfigure[\label{fig: squared brownian motion}$W^2_t$]{\includegraphics[width=0.32\textwidth]{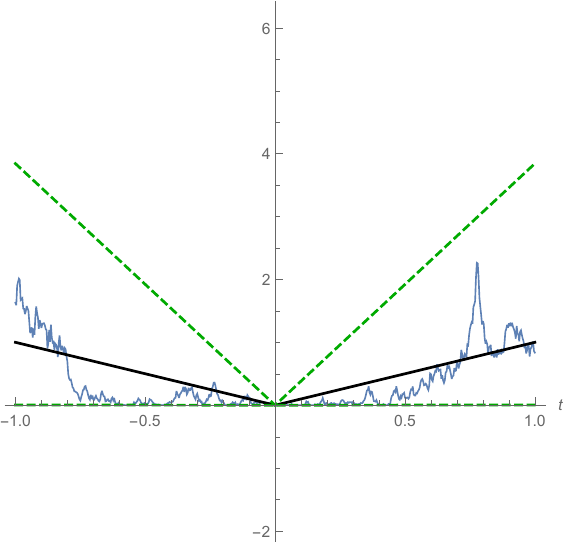}}
	\caption{Sample path, mean process, and $5^\text{th}$ and $95^\text{th}$ percentiles of three processes}
	\label{fig: all plots}
\end{figure}

The percentile curves in Figure \ref{fig: process} are not tangent to the $y$-axis, unlike the corresponding curves in Figure \ref{fig: brownian motion}; this shows that the martingale part of $X_t$ is in some sense small. The mean curve is tangent to the $x$-axis unlike the corresponding curve in Figure \ref{fig: squared brownian motion}; this shows that the finite variation part of $X_t$ is also in some sense small. By Theorem $3$ in \cite{armstrong2016coordinatefree}, these features ensure the coefficients of $dW_t$ and $dt$ in the SDE for $X_t$ vanish at time $0$.

Our aim is to develop a definition of a differential which can apply to more general processes than diffusion processes, but still admits this visual interpretation when applied to the latter. This requires us to precisely define what we mean by \lq in some sense small\rq.

We note the clear time asymmetry: having a differential of $0$ indicates that the change in $X_t$ is small for small positive $t$, but says nothing about negative values of $t$. In summary, the part of a fan diagram for the process in the direction of increasing time allows us to visually identify the differential.

\subsection{Chords and variations bounded in probability}

For a real-valued continuous-time stochastic process $\{X_t\}_{t\geq 0}$, and for any two times $0\leq t < s$, we denote the running maximum starting at $t$ by
\begin{equation*}
    X^*_{t,s} \coloneqq \sup_{u\in(t,s)}\lvert X_{t,u} \rvert \text{ and }X^*_s \coloneqq X^*_{0,s}.
\end{equation*}
For a $\mathbb{R}^m$-valued process, $X^*_{t,s}\coloneqq \max\limits_{1\leq i\leq m}\left(X^{(i)}\right)^*_{t,s}$.

\begin{definition}[Chords bounded in probability] \label{def: chords bounded in probability}
	Let $X$ be a continuous semimartingale with canonical decomposition $X=X_0+A+M$. Furthermore, let $A=A^1-A^2$ be the canonical decomposition of the finite variation part as a difference of non-decreasing processes. Then we say that $X$ has \emph{chords bounded in probability} (CBP) at time $t$ if
	\begin{equation*}
		A^1_{t,t+h} = \mathcal{O}_p(h),~A^2_{t,t+h}=\mathcal{O}_p(h) \text{ and } \left(M^*_{t,t+h}\right)^2 = \mathcal{O}_p(h).
	\end{equation*} 
We denote the set of all real-valued continuous semimartingales with CBP at time $t$ by $\mathcal{S}^\text{CBP}_t(\mathbb{R})$.
\end{definition}

The set of all processes with CBP at time $t$ is the space of processes for which we will define a differential. We avoid calling these processes `differentiable' at $t$. This is because in the deterministic case, having chords bounded in probability does not imply the differentiability of a function although the converse is true. For instance, the absolute value $\lvert t\rvert$ has chords bounded in probability by $1$ but is not differentiable at $t = 0$. 

\begin{definition}[Total variation and quadratic (co)variation \cite{legall2016martingales,protter2005stochasticintegration}]
Let $X$ be a continuous $\mathbb{R}^m$-valued semimartingale with canonical decomposition $X = X_0 + A + M$. The total variation of $A$ over $[t,t+h]$ is defined as 
\begin{equation*}
	TV(A)_{t,t+h} \coloneqq \Plim\limits_{\lVert \pi\rVert\rightarrow 0}\sum_{(t_i,t_{i+1})\in\pi}\lVert A_{t_i,t_{i+1}}\rVert_2,
\end{equation*}
and the quadratic variation of $M$ as
\begin{equation*}
	\lbrack M\rbrack_{t,t+h} \coloneqq \Plim\limits_{\lVert \pi\rVert\rightarrow 0}\sum_{(t_i,t_{i+1})\in\pi}\lVert M_{t_i,t_{i+1}}\rVert_2^2,
\end{equation*}
where the limit is taken in probability over all finite partitions of $[t,t+h]$ and where $\lVert\cdot \rVert_2$ is the standard Euclidian norm on $\mathbb{R}^m$. The bracket ignores finite variation processes so that $\lbrack X\rbrack_t = \lbrack M\rbrack_t$ for all $t\geq 0$. The quadratic covariation of two continuous semimartingales $X$ and $Y$ is defined using the polarisation identity $\lbrack X,Y\rbrack \coloneqq \frac{1}{2}\left(\lbrack X+Y\rbrack - \lbrack X\rbrack - \lbrack Y\rbrack\right)$.
\end{definition}

Since all norms on $\mathbb{R}^m$ are equivalent, the choice of the Euclidian norm above turns out to be arbitrary. Our results would still hold if we had chosen any other norm on $\mathbb{R}^m$. We refer to Lemma \ref{lemma: tv and qv definitions do not depend on choice of norm} in the Appendix for the formal justification of this statement.  

\begin{definition}[Variations bounded in probability]
We say that a continuous semimartingale $X$ with canonical decomposition $X=X_0+A+M$ has \emph{variations bounded in probability} (VBP) at time $t$ if $TV(A)_{t,t+h} = \mathcal{O}_p(h)$ and $\lbrack M\rbrack_{t,t+h} = \mathcal{O}_p(h)$. We denote the set of all real-valued continuous semimartingales with VBP at time $t$ by $\mathcal{S}^\text{VBP}_t(\mathbb{R})$.
\end{definition} 

\begin{remark}
	For the remainder of this section, and unless clearly stated by the context of the results, we consider $\mathbb{R}$-valued continuous semimartingales. The corresponding results for $\mathbb{R}^m$-valued processes are a consequence of the one-dimensional results applied component-wise and the equivalence of $p$-norms in Euclidean spaces. For a rigorous justification, we refer to Lemma \ref{lemma: all results are true iff they hold component wise}.
\end{remark}

The notion of VBP is equivalent to that of CBP for a general continuous semimartingale. We introduce both definitions for two reasons: the first is that the definition of CBP does not make any mention concept of variation (whether total or quadratic). The definition of variation makes use of the limit of Riemann-like sums which are analogous to integrals. We want to avoid using integral-like objects to define differential so as not to create tautological results relating our stochastic differentials and the usual stochastic integrals. On the other hand, we do introduce the definition of VBP as a technical tool. Using the properties of total and quadratic variation in our proofs makes these shorter and more direct.

We now show the equivalence between the two definitions. It is straightforward to prove for the finite variation part of the process. 
\begin{lemma}
	Let $A$ be a continuous finite variation process with $A_0 = 0$. Then $A\in\mathcal{S}^\text{CBP}_t \iff A\in\mathcal{S}^\text{VBP}_t$.
\end{lemma}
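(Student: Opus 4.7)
The plan is to observe that since $A$ is a continuous finite variation process with $A_0 = 0$, its canonical semimartingale decomposition is simply $A = 0 + A + 0$, so the martingale part $M$ is identically zero. Therefore the condition $(M^*_{t,t+h})^2 = \mathcal{O}_p(h)$ in CBP is automatic, and both CBP and VBP become statements only about the finite variation part.

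The key ingredient is the Jordan/Hahn decomposition: if $A = A^1 - A^2$ is the canonical decomposition as a difference of non-decreasing processes (with $A^1_0 = A^2_0 = 0$), then $A^1$ and $A^2$ are the positive and negative variation processes of $A$, and the total variation satisfies
\begin{equation*}
TV(A)_s = A^1_s + A^2_s, \qquad s \geq 0.
\end{equation*}
Since $A^1$ and $A^2$ are non-decreasing, this gives the increment identity $TV(A)_{t,t+h} = A^1_{t,t+h} + A^2_{t,t+h}$ with each summand non-negative.

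Now I would prove the two implications directly. For $(\Rightarrow)$: assuming $A^1_{t,t+h} = \mathcal{O}_p(h)$ and $A^2_{t,t+h} = \mathcal{O}_p(h)$, the sum rule for $\mathcal{O}_p$ in Lemma \ref{lemma: properties of little and big o nonation in probability} (adapted to continuous time as noted after that lemma) immediately yields $TV(A)_{t,t+h} = \mathcal{O}_p(h)$. For $(\Leftarrow)$: assuming $TV(A)_{t,t+h} = \mathcal{O}_p(h)$, the pointwise inequalities
\begin{equation*}
0 \leq A^1_{t,t+h} \leq TV(A)_{t,t+h}, \qquad 0 \leq A^2_{t,t+h} \leq TV(A)_{t,t+h},
\end{equation*}
transfer the stochastic boundedness: for any $\epsilon > 0$, choosing $K$ and $\eta$ witnessing $\mathcal{O}_p(h)$ for $TV(A)_{t,t+h}/h$ gives the same witnesses for $A^1_{t,t+h}/h$ and $A^2_{t,t+h}/h$.

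There is no real obstacle here; the statement is essentially a repackaging of the Jordan decomposition. The only point that deserves care is justifying that the canonical decomposition $A = A^1 - A^2$ mentioned in Definition \ref{def: chords bounded in probability} really coincides with the positive/negative variation processes, so that $TV(A) = A^1 + A^2$ holds as an increment identity and not merely up to an additive constant. This is standard but should be cited or spelled out in one line.
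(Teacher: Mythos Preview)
Your proposal is correct and follows essentially the same approach as the paper: both use the Jordan decomposition $A = A^1 - A^2$ with $TV(A) = A^1 + A^2$, and the forward direction is identical. For the converse the paper writes $A^1_{t,t+h} = \tfrac{1}{2}\bigl(TV(A)_{t,t+h} + A_{t,t+h}\bigr)$ explicitly and bounds via $|A_{t,t+h}| \leq TV(A)_{t,t+h}$, whereas you use the slightly more direct inequality $0 \leq A^i_{t,t+h} \leq TV(A)_{t,t+h}$; these are minor variants of the same argument.
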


\begin{proof}
The unique splitting of $A$ as the difference of two continuous non-decreasing processes beginning at zero is 
	\begin{equation*}
		A_t = \underbrace{\frac{1}{2}\left(TV(A)_t+A_t\right)}_{A^1_t} - \underbrace{\frac{1}{2}\left(TV(A)_t-A_t\right)}_{A^2_t}.
	\end{equation*} 
Suppose $A^1_{t,t+h} = \mathcal{O}_p(h)$ and $A^2_{t,t+h} = \mathcal{O}_p(h)$. Then $TV(A)_{t,t+h} = A^1_{t,t+h} + A^2_{t,t+h} = \mathcal{O}_p(h)\Rightarrow A\in\mathcal{S}^\text{VBP}_t$. Conversely, if $TV(A)_{t,t+h}=\mathcal{O}_p(h)$ then $A_{t,t+h} = \mathcal{O}_p(h)$ since $\lvert A_{t,t+h}\rvert \leq TV(A)_{t,t+h}$. Hence $\frac{1}{2}\left(TV(A)_{t,t+h}\pm A_{t,t+h}\right) = \mathcal{O}_p(h) \Rightarrow A\in\mathcal{S}^\text{CBP}_t$.
\end{proof}

To show the equivalence for local martingales we need a preliminary lemma relating the joint density of their running supremum and their quadratic variation. The Burkholder--Davis--Gundy inequality is the classical result which provides this sort of relation and in our case, we only need a piece of the proof of the BDG inequality, namely `good $\lambda$ inequalities'.

\begin{definition}[Good $\lambda$ inequality \cite{cohen2015}]
	The pair of non-negative random variables $(X,Y)$ is said to satisfy a \emph{good $\lambda$ inequality} for a constant $\beta>1$ and $\psi:\mathbb{R}_+\rightarrow\mathbb{R}_+$ satisfying $\psi(\delta)\rightarrow 0$ as $\delta\rightarrow 0$ such that
	\begin{equation*}
		\mathds{P}\left(X>\beta\lambda, Y<\delta\lambda\right) \leq \psi(\delta)\mathds{P}\left(X\geq \lambda\right),~\forall\delta,\lambda>0.
	\end{equation*}
\end{definition}

\begin{lemma}
	Let $M$ be a continuous local martingale with $M_0 = 0$. Then
	\begin{enumerate}[label=(\roman*), itemsep=0pt, topsep=0pt]
		\item $\left(M^*_\infty,\lbrack M\rbrack_\infty^{\nicefrac{1}{2}}\right)$ satisfies a good $\lambda$ inequality for any $\beta>1$ with $\psi(\delta) = \frac{\delta^2}{(\beta-1)^2}$, and
		\item $\left(\lbrack M\rbrack_\infty^{\nicefrac{1}{2}},M^*_\infty\right)$ satisfies a good $\lambda$ inequality for any $\beta>1$ with $\psi(\delta) = \frac{4\delta^2}{\beta^2-1}$.
	\end{enumerate}
\end{lemma}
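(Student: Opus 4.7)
The plan is to build, for each of the two inequalities, an auxiliary continuous local martingale $N$ obtained by chopping $M$ between two hitting times, chosen so that on the ``bad event'' one of the summands ($N_\infty^2$ or $[N]_\infty$) can be bounded below by a multiple of $\lambda^2$ and the other can be bounded above by a multiple of $\delta^2 \lambda^2$. Chebyshev's inequality combined with the $L^2$-isometry $\mathbb{E}[N_\infty^2]=\mathbb{E}[[N]_\infty]$ will then deliver the required estimate.

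For the first inequality, introduce the stopping times
\[
S = \inf\{t\geq 0 : |M_t|\geq \lambda\},\quad T = \inf\{t\geq 0 : |M_t|\geq \beta\lambda\},\quad R = \inf\{t\geq 0 : [M]_t \geq \delta^2\lambda^2\},
\]
and set $N = (M^T - M^S)^R$. A direct computation of the bracket of a difference of stopped martingales, using $\langle M^T, M^S\rangle = \langle M\rangle^S$ for $S \leq T$, gives $[N]_\infty = [M]_{T\wedge R} - [M]_{S\wedge R}$. Checking cases according to the order of $S,T,R$ shows $[N]_\infty \leq \delta^2\lambda^2$ everywhere, and $[N]_\infty = 0$ on $\{S=\infty\}$. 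Hence $N$ is a bounded-$L^2$ true martingale. On the event $\{M^*_\infty > \beta\lambda,\, [M]^{1/2}_\infty < \delta\lambda\}$ we have $R=\infty$ and $S<T<\infty$, so by continuity $|M_S|=\lambda$ and $|M_T|=\beta\lambda$, giving $|N_\infty|=|M_T - M_S| \geq (\beta-1)\lambda$. Chebyshev then yields
\[
\mathbb{P}(M^*_\infty > \beta\lambda,\, [M]^{1/2}_\infty < \delta\lambda) \leq \frac{\mathbb{E}[N_\infty^2]}{(\beta-1)^2\lambda^2} = \frac{\mathbb{E}[[N]_\infty \mathbf{1}_{S<\infty}]}{(\beta-1)^2\lambda^2} \leq \frac{\delta^2}{(\beta-1)^2}\,\mathbb{P}(M^*_\infty \geq \lambda),
\]
which is the desired good $\lambda$ inequality.

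For the second inequality, swap the roles: set
\[
S = \inf\{t\geq 0 : [M]_t \geq \lambda^2\},\quad T = \inf\{t\geq 0 : [M]_t \geq \beta^2\lambda^2\},\quad R = \inf\{t\geq 0 : |M_t| \geq \delta\lambda\},
\]
and again let $N = (M^T - M^S)^R$. This time the useful bound comes from the path rather than from the bracket: by definition of $R$ and continuity, $|M^R_t|\leq \delta\lambda$ everywhere, so $N^*_\infty \leq 2\delta\lambda$, and $N$ vanishes identically on $\{S=\infty\}$. In particular $N$ is a bounded martingale with $\mathbb{E}[N_\infty^2] \leq 4\delta^2\lambda^2\,\mathbb{P}(S<\infty)$. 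On the event $\{[M]^{1/2}_\infty > \beta\lambda,\, M^*_\infty < \delta\lambda\}$ we have $R=\infty$ and $S<T<\infty$, so $[N]_\infty = [M]_T - [M]_S = (\beta^2-1)\lambda^2$. Markov's inequality applied to $[N]_\infty$, together with $\mathbb{E}[[N]_\infty]=\mathbb{E}[N_\infty^2]$, gives the bound $\frac{4\delta^2}{\beta^2-1}\,\mathbb{P}([M]^{1/2}_\infty \geq \lambda)$.

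The main technical obstacle is the careful bracket computation and the case analysis that ensures $[N]_\infty$ (resp.\ $N^*_\infty$) has exactly the right a.s.\ upper bound, together with the crucial observation that $N\equiv 0$ on the complement of $\{S<\infty\}$; this last point is what promotes the naive bounds $\delta^2/(\beta-1)^2$ and $4\delta^2/(\beta^2-1)$ into \emph{good} $\lambda$ inequalities with the factor $\mathbb{P}(X\geq \lambda)$ on the right-hand side. Verifying that $N$ is genuinely a martingale (as opposed to only a local martingale) is immediate in both cases because $N$ is $L^2$-bounded: by $[N]_\infty$ in the first part, by $N^*_\infty$ in the second.
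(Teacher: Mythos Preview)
Your proof is correct. The paper itself does not give a proof of this lemma at all; it simply cites Lemma~11.5.3 of Cohen and Elliott. What you have written is exactly the standard stopping-time argument that underlies the cited result: chop $M$ between the two level-hitting times, cap with a third stopping time that forces an a.s.\ bound on whichever of $[N]_\infty$ or $N^*_\infty$ you need, then apply Chebyshev together with the $L^2$-isometry $\mathbb{E}[N_\infty^2]=\mathbb{E}[[N]_\infty]$. Your observation that $N\equiv 0$ on $\{S=\infty\}$ is precisely what produces the factor $\mathbb{P}(X\geq\lambda)$ on the right-hand side, and your justification that $N$ is a genuine (not merely local) martingale in each case is sound. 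In short, you have supplied the argument the paper only points to, and done so cleanly.
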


\begin{proof}
	See Lemma $11.5.3.$ of \cite{cohen2015}.
\end{proof}

\begin{corollary}\label{cor: good lambda inequality}
	Let $N$ be a continous local martingale. Then for any $t\geq  0$ and $h>0$,
	\begin{enumerate}[label=(\roman*), itemsep=0pt, topsep=0pt]
		\item $\left(N^*_{t,t+h},\lbrack N\rbrack_{t,t+h}^{\nicefrac{1}{2}}\right)$ satisfies a good $\lambda$ inequality for any $\beta>1$ with $\psi(\delta) = \frac{\delta^2}{(\beta-1)^2}$, and
		\item $\left(\lbrack N\rbrack_{t,t+h}^{\nicefrac{1}{2}},N^*_{t,t+h}\right)$ satisfies a good $\lambda$ inequality for any $\beta>1$ with $\psi(\delta) = \frac{4\delta^2}{\beta^2-1}$.
	\end{enumerate}
\end{corollary}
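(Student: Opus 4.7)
The plan is to deduce the corollary from the preceding lemma by a standard shift-and-stop argument, reducing the increment of $N$ over the interval $[t,t+h]$ to the time-zero-to-infinity behaviour of an auxiliary continuous local martingale starting at $0$.

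First I would define the auxiliary process $\tilde{N}_s := N_{(t+s)\wedge(t+h)} - N_t$ for $s\geq 0$, with respect to the shifted filtration $\tilde{\mathcal{F}}_s := \mathcal{F}_{(t+s)\wedge(t+h)}$. Since $N$ is a continuous local martingale, the stopped process $N^{t+h}$ is also a continuous local martingale; shifting the filtration by $t$ and subtracting the $\mathcal{F}_t$-measurable constant $N_t$ preserves the local martingale property, so $\tilde{N}$ is a continuous local martingale with $\tilde{N}_0 = 0$. This brings us into the hypotheses of the previous lemma.

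Next I would identify the two relevant quantities for $\tilde{N}$ with those for $N$ over $[t,t+h]$. By construction,
\begin{equation*}
\tilde{N}^*_\infty = \sup_{s\geq 0}|\tilde{N}_s| = \sup_{u\in[t,t+h]}|N_u - N_t| = N^*_{t,t+h}.
\end{equation*}
For the quadratic variation, stopping commutes with $[\cdot]$ in the sense that $[N^{t+h}]_s = [N]_{s\wedge(t+h)}$, and subtracting $N_t$ leaves the quadratic variation unchanged, so $[\tilde{N}]_\infty = [N]_{t+h} - [N]_t = [N]_{t,t+h}$.

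Applying the previous lemma to $\tilde{N}$ now yields, for every $\beta>1$ and every $\delta,\lambda>0$,
\begin{equation*}
\mathds{P}\!\left(N^*_{t,t+h} > \beta\lambda,\ [N]^{\nicefrac{1}{2}}_{t,t+h} < \delta\lambda\right) \leq \frac{\delta^2}{(\beta-1)^2}\,\mathds{P}\!\left(N^*_{t,t+h}\geq \lambda\right),
\end{equation*}
and the symmetric inequality with the roles reversed and $\psi(\delta) = 4\delta^2/(\beta^2-1)$. These are exactly the two good $\lambda$ inequalities claimed. I do not expect any genuine obstacle here; the only point that requires a moment's care is the verification that the shifted-and-stopped process is a local martingale with the claimed running maximum and quadratic variation, and this is an entirely standard manipulation.
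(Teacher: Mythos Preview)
Your proposal is correct and follows essentially the same approach as the paper: define the shifted, stopped local martingale $N_{\cdot}-N_t$ starting at $0$, invoke the preceding lemma, and use $[N^\tau]=[N]^\tau$ to identify the quadratic variation over $[t,t+h]$. The only cosmetic difference is that you re-index time to start at $s=0$ whereas the paper keeps the filtration $\{\mathcal{F}_s:s\geq t\}$, but the argument is the same.
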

\begin{proof}
	We could not find a proof of this result in the literature, but we do not claim it is new. The proof is a simple application of the previous result so we prove it here. The process $M_s=N_s - N_t$, $s\geq t$ is a martingale with respect to the filtration $\left\{\mathcal{F}_s:s\geq t\right\}$ and it `starts' at $M_t=0$ and so does $\left\{\lbrack N\rbrack_{t,s}:s\geq t\right\}$. We stop the martingale $N$ in the lemma above at $\tau = t+h$ and use that $\lbrack N^\tau\rbrack = \lbrack N \rbrack^\tau$ for any stopping time $\tau$.
\end{proof}

\begin{lemma}\label{lemma: CBP is equivalent to vbp for local martingales}
	Let $M$ be a continuous local martingale. Then
	\begin{equation*}
		(M^*_{t,t+h})^2 = \mathcal{O}_p(h) \iff \lbrack M\rbrack_{t,t+h} = \mathcal{O}_p(h).
	\end{equation*}
That is, $$M\in\mathcal{S}^\text{CBP}_t \iff M\in\mathcal{S}^\text{VBP}_t.$$
\end{lemma}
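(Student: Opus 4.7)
The plan is to obtain both implications from the good $\lambda$ inequalities in Corollary \ref{cor: good lambda inequality}. The two directions use the same template, so I describe only the forward direction $(M^*_{t,t+h})^2 = \mathcal{O}_p(h) \Longrightarrow [M]_{t,t+h} = \mathcal{O}_p(h)$; the converse is identical after swapping the roles of $M^*_{t,t+h}$ and $[M]^{1/2}_{t,t+h}$ and invoking the first bullet of the corollary.

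Starting from the second bullet applied with $\beta = 2$, and decomposing the event $\{[M]^{1/2}_{t,t+h} > 2\lambda\}$ according to whether $M^*_{t,t+h} < \delta\lambda$ or not, one obtains
$$\mathbb{P}\bigl([M]^{1/2}_{t,t+h} > 2\lambda\bigr) \leq \tfrac{4\delta^2}{3}\,\mathbb{P}\bigl([M]^{1/2}_{t,t+h} \geq \lambda\bigr) + \mathbb{P}\bigl(M^*_{t,t+h} \geq \delta\lambda\bigr).$$
The delicate point is that the right-hand side still features the very process we are trying to control. For the usual moment-estimate application of good $\lambda$ inequalities one would iterate this bound, but here, because we only seek tightness, it will suffice to estimate the first probability by the trivial bound $1$ and to render the prefactor $4\delta^2/3$ arbitrarily small.

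Given $\epsilon > 0$, I would first pick $\delta > 0$ small enough that $4\delta^2/3 < \epsilon/2$. With $\delta$ now fixed, the hypothesis $M^*_{t,t+h} = \mathcal{O}_p(\sqrt{h})$ supplies constants $K_0 > 0$ and $\eta > 0$ with $\mathbb{P}(M^*_{t,t+h} \geq K_0\sqrt{h}) < \epsilon/2$ for all $h \in (0,\eta)$. Setting $\lambda := K_0\sqrt{h}/\delta$ and combining the two estimates then yields
$$\mathbb{P}\bigl([M]^{1/2}_{t,t+h} > 2K_0\sqrt{h}/\delta\bigr) < \epsilon \qquad \text{for all } h \in (0,\eta),$$
which is precisely the statement $[M]^{1/2}_{t,t+h} = \mathcal{O}_p(\sqrt{h})$, i.e.\ $[M]_{t,t+h} = \mathcal{O}_p(h)$. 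The reverse direction runs verbatim with the first bullet of Corollary \ref{cor: good lambda inequality} and $\psi(\delta) = \delta^2/(\beta - 1)^2$; the only conceptual hurdle in the whole argument is the realisation that the self-reference on the right-hand side of the good $\lambda$ inequality is harmless when aiming for $\mathcal{O}_p$ rather than $L^p$ bounds.
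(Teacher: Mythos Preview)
Your proof is correct and follows essentially the same route as the paper: both exploit Corollary \ref{cor: good lambda inequality}, split the tail event according to the size of the companion process, and bound the self-referential probability on the right by $1$. The only cosmetic difference is in how the prefactor $\psi(\delta)$ is driven to zero: you fix $\beta = 2$ and send $\delta \to 0$, whereas the paper fixes $\delta = 1$ and sends $\beta \to \infty$; either choice works since the good $\lambda$ inequality has two free parameters.
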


\begin{proof}
	For the `$\Rightarrow$' part, let $\lambda = \sqrt{Kh}$ and $\delta = 1$ in Corollary \ref{cor: good lambda inequality}, then
	\begin{equation*}
		\mathds{P}\left(\lbrack M\rbrack_{t,t+h}>\beta^2Kh, \left(M^*_{t,t+h}\right)^2<Kh\right) \leq \frac{4}{\beta^2-1}\mathds{P}\left(\lbrack M\rbrack_{t,t+h}\geq Kh\right),
	\end{equation*}
for any $\beta > 1$ and $K,h>0$. Hence,
\begin{multline*}
	\mathds{P}\left(\lbrack M\rbrack_{t,t+h}>\beta^2 Kh\right) 
	 = \mathds{P}\left(\lbrack M\rbrack_{t,t+h}>\beta^2 Kh , \left(M^*_{t,t+h}\right)^2\geq Kh \right) \\
	 + \mathds{P}\left(\lbrack M\rbrack_{t,t+h}>\beta^2 Kh, \left(M^*_{t,t+h}\right)^2 < Kh \right) \leq \mathds{P}\left(\left(M^*_{t,t+h}\right)^2\geq Kh\right) + \frac{4}{\beta^2-1}.
\end{multline*}
For any $\epsilon>0$, we can pick $K>0$ and $\beta>1$ large enough and $\eta >0$ such that
\begin{equation*}
	\mathds{P}\left(\left(M^*_{t,t+h}\right)^2 \geq Kh \right) < \frac{\epsilon}{2},~\forall h\in(0,\eta) \text{ and }\frac{4}{\beta^2-1} < \frac{\epsilon}{2}.
\end{equation*}
Hence,
\begin{equation*}
	\mathds{P}\left(\lbrack M\rbrack_{t,t+h} > \beta^2Kh\right) < \epsilon,~\forall h\in(0,\eta).
\end{equation*}
For the `$\Leftarrow$' part, we proceed similarly. Using Corollary \ref{cor: good lambda inequality} we also have 
\begin{equation*}
	\mathds{P}\left(\left(M^*_{t,t+h}\right)^2>\beta^2 Kh, \lbrack M\rbrack_{t,t+h} < Kh \right) \leq \frac{1}{(\beta-1)^2}\mathds{P}\left(\left(M^*_{t,t+h}\right)^2> Kh\right),
\end{equation*}
for all $\beta >1$. Hence,
\begin{equation*}
	\mathds{P}\left(\left(M^*_{t,t+h}\right)^2>\beta^2Kh\right) \leq \frac{1}{(\beta-1)^2} + \mathds{P}\left(\lbrack M\rbrack_{t,t+h}\geq Kh\right).
\end{equation*}
For any $\epsilon>0$, we can pick $K>0$ and $\eta>0$ such that
\begin{equation*}
	\mathds{P}\left(\lbrack M\rbrack_{t,t+h}\geq Kh\right) <\frac{\epsilon}{2},~\forall h\in(0,\eta).
\end{equation*}
We can pick $\beta >1$ large enough such that $\frac{1}{(\beta-1)^2}<\frac{\epsilon}{2}$. Thus
\begin{equation*}
	\mathds{P}\left(\left(M^*_{t,t+h}\right)^2 >\beta^2Kh\right) <\epsilon,~\forall h\in(0,\eta).
\end{equation*}
\end{proof}

\begin{corollary} \label{lemma: VBP is equivalent to CBP}
	$$\mathcal{S}^\text{CBP}_t = \mathcal{S}^\text{VBP}_t$$
\end{corollary}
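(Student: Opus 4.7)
The plan is to read this corollary as the direct combination of the two preceding equivalences, one for the finite variation part and one for the local martingale part of the canonical decomposition, with essentially no new content to add. I would write three short steps.

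First, I would unpack the two definitions against a fixed canonical decomposition $X = X_0 + A + M$. Since $X_0$ is constant in time (and in particular $\mathcal{O}_p(h)$ trivially), membership of $X$ in $\mathcal{S}^\text{CBP}_t$ depends only on the three conditions $A^1_{t,t+h}=\mathcal{O}_p(h)$, $A^2_{t,t+h}=\mathcal{O}_p(h)$, and $(M^*_{t,t+h})^2=\mathcal{O}_p(h)$, while membership in $\mathcal{S}^\text{VBP}_t$ depends only on $TV(A)_{t,t+h}=\mathcal{O}_p(h)$ and $[M]_{t,t+h}=\mathcal{O}_p(h)$. So the task is to prove that the first triple is equivalent to the second pair.

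Next, I would invoke the preceding lemma on finite variation processes applied to $A$ (which is continuous, of finite variation, and can be shifted so that the relevant increments start at zero) to conclude that $A^1_{t,t+h}=\mathcal{O}_p(h)$ and $A^2_{t,t+h}=\mathcal{O}_p(h)$ together are equivalent to $TV(A)_{t,t+h}=\mathcal{O}_p(h)$; and I would invoke Lemma \ref{lemma: CBP is equivalent to vbp for local martingales} applied to the continuous local martingale $M$ to conclude that $(M^*_{t,t+h})^2=\mathcal{O}_p(h)$ is equivalent to $[M]_{t,t+h}=\mathcal{O}_p(h)$.

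Finally, I would combine these two equivalences conjunctively. Since the finite variation and martingale parts are independent components of the canonical decomposition and the CBP/VBP conditions on them are stated separately, the joint equivalence holds, and this is precisely the statement $\mathcal{S}^\text{CBP}_t=\mathcal{S}^\text{VBP}_t$. There is no real obstacle here: all the work has already been done in the two preceding lemmas, and the only mildly delicate point is to make sure the definitions are being matched component-by-component against the canonical decomposition, which the stated lemmas are tailored to handle.
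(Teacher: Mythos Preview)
Your proposal is correct and matches the paper's approach exactly: the corollary is stated there without proof because it is the immediate conjunction of the two preceding lemmas, one handling the finite variation part and one (Lemma \ref{lemma: CBP is equivalent to vbp for local martingales}) handling the local martingale part of the canonical decomposition. There is nothing to add.
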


\begin{lemma}
	 $\mathcal{S}^\text{CBP}_t$is a vector subspace of the space of continuous semimartingales for any $t\geq 0$.
\end{lemma}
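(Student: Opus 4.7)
The plan is to reduce everything to the condition of variations bounded in probability, since $\mathcal{S}^{\text{VBP}}_t = \mathcal{S}^{\text{CBP}}_t$ by Corollary \ref{lemma: VBP is equivalent to CBP}, and VBP is defined in terms of the finite-variation quantity $TV(A)$ and the quadratic variation $[M]$, both of which behave well under sums and scalar multiples. Containing $0$ is immediate since the zero process has trivial decomposition $0 = 0 + 0$ with $TV(0)_{t,t+h}\equiv 0$ and $[0]_{t,t+h}\equiv 0$, which are vacuously $\mathcal{O}_p(h)$.

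For scalar multiplication, I would take $X \in \mathcal{S}^{\text{VBP}}_t$ with canonical decomposition $X=X_0+A+M$, and observe that by uniqueness of the semimartingale decomposition the canonical decomposition of $\lambda X$ is $\lambda X_0 + \lambda A + \lambda M$. Then $TV(\lambda A)_{t,t+h} = |\lambda|\, TV(A)_{t,t+h} = \mathcal{O}_p(h)$ and $[\lambda M]_{t,t+h} = \lambda^2 [M]_{t,t+h} = \mathcal{O}_p(h)$, so $\lambda X \in \mathcal{S}^{\text{VBP}}_t$.

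For closure under addition, let $X = X_0+A+M$ and $Y = Y_0+B+N$ both lie in $\mathcal{S}^{\text{VBP}}_t$. Uniqueness of the canonical decomposition again gives $X+Y = (X_0+Y_0) + (A+B) + (M+N)$. Subadditivity of total variation yields $TV(A+B)_{t,t+h} \leq TV(A)_{t,t+h} + TV(B)_{t,t+h} = \mathcal{O}_p(h)$ via Lemma \ref{lemma: properties of little and big o nonation in probability}. The main obstacle is the martingale part, where $[M+N]=[M]+2[M,N]+[N]$ introduces the cross-term. I would control it either directly from the limiting Riemann sums using $(a+b)^2 \leq 2a^2 + 2b^2$, yielding the pathwise bound
\begin{equation*}
[M+N]_{t,t+h} \leq 2\,[M]_{t,t+h} + 2\,[N]_{t,t+h},
\end{equation*}
or equivalently via the Kunita--Watanabe inequality combined with AM--GM:
\begin{equation*}
\bigl|2[M,N]_{t,t+h}\bigr| \leq 2\sqrt{[M]_{t,t+h}\,[N]_{t,t+h}} \leq [M]_{t,t+h} + [N]_{t,t+h}.
\end{equation*}
In either case the right-hand side is a sum of two $\mathcal{O}_p(h)$ terms, hence itself $\mathcal{O}_p(h)$, so $[M+N]_{t,t+h}=\mathcal{O}_p(h)$ and $X+Y \in \mathcal{S}^{\text{VBP}}_t$. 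Invoking Corollary \ref{lemma: VBP is equivalent to CBP} to return to CBP completes the argument.
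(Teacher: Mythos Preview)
Your proposal is correct and follows essentially the same route as the paper: reduce to VBP via Corollary~\ref{lemma: VBP is equivalent to CBP}, handle the finite-variation part by subadditivity of $TV$, and handle the local-martingale part via a pathwise bound on $[M+N]$. Your inequality $[M+N]_{t,t+h}\le 2[M]_{t,t+h}+2[N]_{t,t+h}$ is in fact the correct one (the paper's stated bound $2[X+Y]\le[X]+[Y]$ is a typo), and your alternative Kunita--Watanabe justification is a nice touch.
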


\begin{proof}
	If $X\in\mathcal{S}^\text{CBP}_t$, it is easy to check that $a X\in\mathcal{S}^\text{CBP}_t$ for any $a\in\mathbb{R}$. It remains to show that the sets are closed under addition. Let $X$ and $Y\in\mathcal{S}^\text{VBP}_t$ with canonical decompositions $X= X_0+A+M$ and $Y=Y_0+B+N$. Naturally, their sum has canonical decomposition $X+Y=(X_0+Y_0)+(A+B)+(M+N)$. Let $\epsilon>0$. By definition, we can find some $K>0$ large enough and some $\eta>0$ such that
	\begin{equation*}
		\min\left\{\mathbb{P}\left(TV(A)_{t,t+h}>\frac{Kh}{2}\right),\mathbb{P}\left(TV(B)_{t,t+h}>\frac{Kh}{2}\right)\right\} < \frac{\epsilon}{2},~\forall h\in(0,\eta).
	\end{equation*}
The total variation is sub-additive so we deduce that
\begin{equation*}
	\mathbb{P}\left(TV(A+B)_{t,t+h}>Kh\right) < \epsilon,~\forall h\in(0,\eta).
\end{equation*}
As $\epsilon$ was arbitrarily chosen, this implies that $TV(A+B)_{t,t+h} = \mathcal{O}_p(h)$. One can check that $2\lbrack X+Y\rbrack_{t,t+h} \leq \lbrack X\rbrack_{t,t+h} + \lbrack Y\rbrack_{t,t+h}$ and use similar steps to deduce that $\lbrack X+Y\rbrack_{t,t+h} = \mathcal{O}_p(h)$.
\end{proof}

$\mathcal{S}^\text{CBP}_t$ is a proper subspace of the space of continuous semimartingales, see Example \ref{ex: not all martingales have CBP} below. Nevertheless, many processes of interest do lie in it. In particular, all It\^o processes with reasonably well-behaved coefficients have chords bounded in probability. This is Example \ref{ex: ito processes have CBP}.

\begin{example}[Not all martingales have chords bounded in probability] \label{ex: not all martingales have CBP}
	For a Brownian motion $\{W_t\}_{t\geq 0}$, let $M_t = W_{\sqrt{t}}$, for each $t\geq 0$. Then $M$ does not have chords bounded in probability at $t=0$. Indeed, $\lbrack M\rbrack_h = \sqrt{h} \neq \mathcal{O}(h)$ and so by Lemma \ref{lemma: CBP is equivalent to vbp for local martingales}, we see that $M\notin \mathcal{S}^\text{CBP}_t$.
\end{example}

\begin{example}[Reasonably well-behaved It\^o processes have chords bounded in probability] \label{ex: ito processes have CBP}
	Let $X$ be an It\^o process of the form $X = X_0 + \int_{0}^{\cdot}\mu_u du + \int_{0}^{\cdot}\sigma_u dW_u$. Suppose that $\mu$ is continuous with probability one or $L^1$-continuous at $t$. Similarly, suppose that $\sigma$ is continuous with probability one or $L^2$-continuous at $t$. Then $X\in\mathcal{S}^\text{CBP}_t$.
\end{example}

\begin{proof}
	See Appendix \ref{appendix: proof of results}.
\end{proof}

\subsection{The zero differential}

\begin{definition}[Zero differential] \label{def: zero differential d^p}
Let $X$ be a continuous semimartingale with decomposition $X=X_0 + A +M$. Furthermore, let $A=A^1-A^2$ be the canonical decomposition of the finite variation part as a difference of non-decreasing processes. We say that $X$ has \emph{differential zero} at time $t$ if
\begin{equation*}
	A^1_{t,t+h} = o_p(h),~ A^2_{t,t+h} = o_p(h) \text{ and }(M^*_{t,t+h})^2 = o_p(h).
\end{equation*}
Whenever $X$ has differential zero, we write $d^p(X)_t = 0$. 
\end{definition}

\begin{lemma} \label{lemma: A has differential 0 iff total variation is little op h}
	Let $A$ be a continuous finite variation process with $A_0 = 0$. Then 
	\begin{equation*}
	    d^p(A)_t = 0 \iff TV(A)_{t,t+h} = o_p(h).
	\end{equation*}
\end{lemma}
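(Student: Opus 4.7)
The plan is to mimic the short argument already used for the CBP version of the same statement, simply replacing $\mathcal{O}_p(h)$ by $o_p(h)$ throughout. The key input is the explicit formula for the canonical decomposition of a continuous finite variation process starting at $0$ as a difference of two non-decreasing processes: if we set
\begin{equation*}
A^1_s \coloneqq \tfrac{1}{2}\bigl(TV(A)_s + A_s\bigr), \qquad A^2_s \coloneqq \tfrac{1}{2}\bigl(TV(A)_s - A_s\bigr),
\end{equation*}
then $A = A^1 - A^2$, both $A^1$ and $A^2$ are continuous and non-decreasing with $A^1_0 = A^2_0 = 0$, and taking increments over $[t,t+h]$ gives
\begin{equation*}
A^1_{t,t+h} + A^2_{t,t+h} = TV(A)_{t,t+h}, \qquad A^1_{t,t+h} - A^2_{t,t+h} = A_{t,t+h}.
\end{equation*}

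For the forward implication, I would assume $d^p(A)_t = 0$. Since $A$ is of finite variation, the martingale part in the canonical semimartingale decomposition vanishes, so the definition of $d^p(A)_t = 0$ reduces exactly to $A^1_{t,t+h} = o_p(h)$ and $A^2_{t,t+h} = o_p(h)$. Adding these and applying the sum rule $o_p(h) + o_p(h) = o_p(h)$ from Lemma \ref{lemma: properties of little and big o nonation in probability} yields $TV(A)_{t,t+h} = o_p(h)$ immediately.

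For the reverse implication, I would assume $TV(A)_{t,t+h} = o_p(h)$ and use the monotonicity of $A^1, A^2$ together with $|A_{t,t+h}| \leq TV(A)_{t,t+h}$. This gives the pointwise bound
\begin{equation*}
0 \leq A^i_{t,t+h} \leq TV(A)_{t,t+h}, \qquad i = 1,2,
\end{equation*}
so dividing by $h$ and passing to probabilities shows that $A^i_{t,t+h} = o_p(h)$ for $i=1,2$. Since the martingale part is zero, this is exactly the condition $d^p(A)_t = 0$.

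There is no real obstacle here; the only point that requires a moment of thought is confirming that the splitting $A^1, A^2$ written above is indeed the canonical one appearing in Definition \ref{def: zero differential d^p} (uniqueness of the decomposition of a continuous finite variation process as a difference of non-decreasing processes started at $0$), and checking that the $o_p$ arithmetic used in each direction is a direct instance of Lemma \ref{lemma: properties of little and big o nonation in probability}. Both are essentially the same steps already used in the proof that $\mathcal{S}^{\text{CBP}}_t$ and $\mathcal{S}^{\text{VBP}}_t$ coincide for finite variation processes, with the asymptotic symbol downgraded from $\mathcal{O}_p$ to $o_p$.
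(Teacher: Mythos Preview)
Your proposal is correct and is essentially identical to the paper's own proof: the paper uses the same canonical splitting $A^1 = \tfrac{1}{2}(TV(A)+A)$, $A^2 = \tfrac{1}{2}(TV(A)-A)$, adds the two increments for the forward direction, and for the converse bounds $A_{t,t+h}$ by $TV(A)_{t,t+h}$ before concluding $\tfrac{1}{2}(TV(A)_{t,t+h}\pm A_{t,t+h}) = o_p(h)$. Your only cosmetic difference is bounding $A^i_{t,t+h}$ directly by $TV(A)_{t,t+h}$ via nonnegativity rather than passing through $A_{t,t+h}$ first, which is an equivalent one-line observation.
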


\begin{proof}
	The unique splitting of $A$ as the difference of two continuous non-decreasing processes beginning at zero is 
	\begin{equation*}
		A_t = \underbrace{\frac{1}{2}\left(TV(A)_t+A_t\right)}_{A^1_t} - \underbrace{\frac{1}{2}\left(TV(A)_t-A_t\right)}_{A^2_t}.
	\end{equation*} 
	Suppose $d^p(A)_t = 0$, i.e. $A^1_{t,t+h} = o_p(h)$ and $A^2_{t,t+h} = o_p(h)$. Then $TV(A)_{t,t+h} = A^1_{t,t+h} + A^2_{t,t+h} = o_p(h)$. Conversely, if $TV(A)_{t,t+h}=o_p(h)$ then $A_{t,t+h} = o_p(h)$ since $\lvert A_{t,t+h}\rvert \leq TV(A)_{t,t+h}$. Hence $\frac{1}{2}\left(TV(A)_{t,t+h}\pm A_{t,t+h}\right) = o_p(h) \Rightarrow d^p(A)_t = 0$.
\end{proof}

\begin{remark}
If a differentiable deterministic function $f$ has $d^p(f)_t = 0$ then $f'(t)=0$. This is because 
\begin{equation*}
	\lim\limits_{h\rightarrow 0^+}\frac{\lvert f(t+h)-f(t)\rvert}{h} \leq \lim\limits_{h\rightarrow 0^+}\frac{TV(f)_{t,t+h}}{h} = 0 \Rightarrow f'(t)=0.
\end{equation*}
The converse however need not to be true. See for instance Example \ref{ex: pathological example} in Appendix \ref{appendix: useful examples}. This means that our differential zero is not a perfect generalisation of the differential zero of the deterministic case. One might instead propose the alternative definition for the differential zero where we only need the finite variation part of the process to satisfy $A_{t,t+h} = o_p(h)$. This would generalise the differential zero from the deterministic case. However, we could find examples of processes that have a differential zero but do not have chords bounded in probability. This suggests that the stochastic versions of either the Fundamental Theorem of Calculus or the chain rule would break down in with this alternative definition.
\end{remark}

\begin{lemma} \label{lemma: M has differential zero iff its QV has differential zero}
	Let $M$ be a continuous local martingale. Then
	\begin{equation*}
		(M^*_{t,t+h})^2 = o_p(h) \iff \lbrack M\rbrack_{t,t+h} = o_p(h).
	\end{equation*}
	That is, $$d^p(M)_t = 0 \iff d^p\left(\lbrack M\rbrack\right)_t = 0.$$
\end{lemma}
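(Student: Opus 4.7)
The plan is to mirror the proof of Lemma \ref{lemma: CBP is equivalent to vbp for local martingales} and apply the good $\lambda$ inequalities of Corollary \ref{cor: good lambda inequality}, with one crucial refinement: we exploit the freedom in the internal parameter of each good $\lambda$ inequality to make the coefficient $\psi(\delta)$ \emph{arbitrarily small} rather than merely finite. In the $\mathcal{O}_p$ version it sufficed to ensure that $\psi(\delta)$ fell below some prescribed level; in the $o_p$ setting we must be able to drive it below any prescribed $\delta > 0$.

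For the forward implication, assume $(M^*_{t,t+h})^2 = o_p(h)$ and fix arbitrary $\epsilon,\delta > 0$. Apply the good $\lambda$ inequality satisfied by $\bigl(\lbrack M\rbrack_{t,t+h}^{\nicefrac{1}{2}},\,M^*_{t,t+h}\bigr)$ with $\lambda = \sqrt{\epsilon h}/\beta$ and internal parameter $\delta' = \beta\sqrt{\gamma/\epsilon}$, so that $\beta\lambda = \sqrt{\epsilon h}$ and $\delta'\lambda = \sqrt{\gamma h}$ for some $\gamma > 0$ to be chosen. Splitting the event $\{\lbrack M\rbrack_{t,t+h} > \epsilon h\}$ according to whether $(M^*_{t,t+h})^2 < \gamma h$ or not, and bounding the trailing probability on the right-hand side by $1$, gives
\begin{equation*}
\mathds{P}\bigl(\lbrack M\rbrack_{t,t+h} > \epsilon h\bigr) \;\leq\; \mathds{P}\bigl((M^*_{t,t+h})^2 \geq \gamma h\bigr) + \frac{4\beta^2\gamma}{(\beta^2-1)\epsilon}.
\end{equation*}
Fix $\beta > 1$, pick $\gamma$ small enough that the second term is below $\delta/2$, and then invoke the hypothesis $(M^*_{t,t+h})^2 = o_p(h)$ to obtain an $\eta > 0$ making the first term less than $\delta/2$ for every $h \in (0,\eta)$.

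The reverse implication is symmetric, using the good $\lambda$ inequality for $\bigl(M^*_{t,t+h},\,\lbrack M\rbrack_{t,t+h}^{\nicefrac{1}{2}}\bigr)$ with analogous parameter choices; the coefficient in front of the residual probability becomes $\beta^2\gamma/((\beta-1)^2\epsilon)$, which is again tamed by shrinking $\gamma$. The main subtlety throughout is the nearly self-referential appearance of $\mathds{P}\bigl(\lbrack M\rbrack_{t,t+h} \geq \epsilon h/\beta^2\bigr)$, and its $M^*$-counterpart, on the right-hand side of the good $\lambda$ inequalities: these probabilities are not assumed to vanish in $h$, so the only route to $o_p$-level control is to ensure that their prefactors are made arbitrarily small, and this is exactly what the $\gamma$-tuning accomplishes. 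Once that observation is in place, the argument reduces to picking the constants $\beta$, $\gamma$, $\eta$ in the correct order.
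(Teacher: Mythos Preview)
Your proposal is correct and follows essentially the same approach as the paper's own proof: both apply Corollary \ref{cor: good lambda inequality} with $\beta\lambda = \sqrt{\epsilon h}$ and an auxiliary threshold ($\gamma$ in your notation, $\tilde{\epsilon}$ in the paper's) chosen small enough to make the $\psi$-coefficient fall below $\delta/2$, then invoke the $o_p$ hypothesis at that auxiliary level. The only cosmetic difference is that you explicitly bound the residual probability $\mathds{P}\bigl(\lbrack M\rbrack_{t,t+h} \geq \epsilon h/\beta^2\bigr)$ by $1$, whereas the paper leaves it in place but implicitly uses the same bound when arguing that a small coefficient suffices.
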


\begin{proof}
	We start with the `$\Rightarrow$' part. Pick some $\epsilon, \tilde{\delta}>0$ and let $h>0$. Set $\beta^2\lambda^2 = \epsilon h$ and $\delta^2 \lambda^2 = \tilde{\epsilon} h$ in Corollary \ref{cor: good lambda inequality}. Then $\delta^2 = \frac{\beta^2 \tilde{\epsilon}}{\epsilon}$ and 
	\begin{equation*}
		\mathds{P}\left(\lbrack M\rbrack_{t,t+h}>\epsilon h,\left(M^*_{t,t+h}\right)^2<\tilde{\epsilon}h\right) \leq 4\frac{\tilde{\epsilon}}{\epsilon}\left(1+\frac{1}{\beta^2-1}\right)\mathds{P}\left(\lbrack M\rbrack_{t,t+h} \geq \frac{\epsilon h}{\beta^2}\right).
	\end{equation*}
	We can pick $\tilde{\epsilon}$ and $\beta$ such that $4\frac{\tilde{\epsilon}}{\epsilon}\left(1+\frac{1}{\beta^2-1}\right) <\frac{\tilde{\delta}}{2}$. For such $\tilde{\epsilon}$, there exists $\eta>0$ such that
	\begin{equation*}
		\mathds{P}\left(\left(M^*_{t,t+h}\right)^2>\tilde{\epsilon}h\right) < \frac{\tilde{\delta}}{2},~\forall h\in(0,\eta).
	\end{equation*}
	Hence,
	\begin{equation*}
		\mathds{P}\left(\lbrack M\rbrack_{t,t+h} > \epsilon h\right) < \tilde{\delta},~\forall h\in(0,\eta).
	\end{equation*}
	For the `$\Leftarrow$' part, we proceed in a similar fashion. Applying the Corollary \ref{cor: good lambda inequality} again, we get
	\begin{equation*}
		\mathds{P}\left(\left(M^*_{t,t+h}\right)^2>\epsilon h, \lbrack M\rbrack_{t,t+h} < \tilde{\epsilon}h \right) \leq \frac{\tilde{\epsilon}}{\epsilon}\left(1+\frac{1}{(\beta-1)}\right)^2\mathds{P}\left(\left(M^*_{t,t+h}\right)^2 \geq \frac{\epsilon h}{\beta^2}\right).
	\end{equation*}
	We can pick $\tilde{\epsilon}$ and $\beta$ such that $\frac{\tilde{\epsilon}}{\epsilon}\left(1+\frac{1}{(\beta-1)}\right)^2<\frac{\tilde{\delta}}{2}$. For such $\tilde{\epsilon}$, there exists $\eta>0$ such that
	\begin{equation*}
		\mathds{P}\left(\lbrack M\rbrack_{t,t+h} > \tilde{\epsilon}h\right) < \frac{\tilde{\delta}}{2},~\forall h\in(0,\eta).
	\end{equation*}
	Hence,
	\begin{equation*}
		\mathds{P}\left(\left(M^*_{t,t+h}\right)^2>\epsilon h\right)<\tilde{\delta},~\forall h\in(0,\eta).
	\end{equation*}
\end{proof}

\begin{lemma}[Alternative characterisation of the differential zero] \label{lemma: alternative characterisation of the differential zero}
	\begin{equation*}
		d^p(X)_t \iff TV(A)_{t,t+h} = o_p(h) \text{ and } \lbrack M\rbrack_{t,t+h} = o_p(h).
	\end{equation*}
\end{lemma}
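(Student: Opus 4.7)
The claim is essentially a direct consequence of the two preceding lemmas (Lemma \ref{lemma: A has differential 0 iff total variation is little op h} and Lemma \ref{lemma: M has differential zero iff its QV has differential zero}), and the plan is simply to unpack the definition of $d^p(X)_t = 0$ and apply each of them to the appropriate piece of the canonical decomposition. Since the canonical decomposition $X = X_0 + A + M$ splits the semimartingale into its finite variation and local martingale parts separately, the conditions in Definition \ref{def: zero differential d^p} decouple accordingly.

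First I would observe that the definition of $d^p(X)_t = 0$ is nothing more than the conjunction of $d^p(A)_t = 0$ (interpreted via the Jordan decomposition $A = A^1 - A^2$, i.e., $A^1_{t,t+h} = o_p(h)$ and $A^2_{t,t+h} = o_p(h)$) and $d^p(M)_t = 0$ (i.e., $(M^*_{t,t+h})^2 = o_p(h)$). Thus the task reduces to showing separately that $d^p(A)_t = 0 \iff TV(A)_{t,t+h} = o_p(h)$ and that $d^p(M)_t = 0 \iff [M]_{t,t+h} = o_p(h)$.

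The first equivalence is Lemma \ref{lemma: A has differential 0 iff total variation is little op h} applied to the continuous finite variation process $A$ (noting that we may assume $A_0 = 0$ by absorbing any constant into $X_0$). The second equivalence is Lemma \ref{lemma: M has differential zero iff its QV has differential zero} applied to the continuous local martingale $M$. Combining these, the three conditions from Definition \ref{def: zero differential d^p} are equivalent to the two conditions $TV(A)_{t,t+h} = o_p(h)$ and $[M]_{t,t+h} = o_p(h)$, which is exactly the claim.

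I expect no real obstacle here — the lemma is a packaging result whose content has already been established. The only minor point to be careful about is the statement of the lemma itself (the displayed equivalence is written with a bare $d^p(X)_t$ rather than $d^p(X)_t = 0$, but this appears to be a typographical oversight, and the intended content is clear from context). One may also wish to note explicitly that neither direction requires any interaction between $A$ and $M$; the decoupling is automatic because the canonical decomposition is unique and $[X] = [M]$, so quadratic variation never sees $A$ while total variation never sees $M$ in the limiting sense relevant to $o_p(h)$.
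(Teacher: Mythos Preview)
Your proposal is correct and matches the paper's own proof, which simply states that the result is a direct application of Lemmas \ref{lemma: A has differential 0 iff total variation is little op h} and \ref{lemma: M has differential zero iff its QV has differential zero}. Your additional remarks about the decoupling of $A$ and $M$ and the typographical slip in the statement are accurate but go slightly beyond what the paper records.
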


\begin{proof}
This is a direct application of Lemmas \ref{lemma: A has differential 0 iff total variation is little op h} and \ref{lemma: M has differential zero iff its QV has differential zero}.
\end{proof}

\begin{corollary}
	\begin{equation*}
	    d^p(X)_t = 0 \Rightarrow X \in\mathcal{S}^\text{CBP}_t.
	\end{equation*}
\end{corollary}

\begin{proof}
This follows from Lemma \ref{lemma: VBP is equivalent to CBP}, Lemma \ref{lemma: alternative characterisation of the differential zero} above and the fact that being $o_p(h)$ implies being $\mathcal{O}_p(h)$.
\end{proof}

\begin{remark}
Our choice of definitions for $\mathcal{S}^\text{CBP}_t$ and the zero differential aim to define the stochastic differential for as large a subclass of continuous semimartingales while keeping certain properties true. One of these is that Lemma \ref{lemma: uniqueness of solutions} should hold. The other is that stochastic versions of the Fundamental Theorem of Calculus and of the chain rule should hold too. In particular, if we take a finite variation process $A$ which is `differentiable' according to our definitions, then $f(A)$ should also be differentiable for any continuously differentiable function $f:\mathbb{R}\rightarrow \mathbb{R}$. Comparing Definitions \ref{def: chords bounded in probability} and \ref{def: zero differential d^p}, one might be tempted to say $A$ is `differentiable' if $A^*_{t,t+h} = \mathcal{O}_p(h)$. This indeed would generalise Definition \ref{def: chords bounded in probability} but would fail to be a closed space under composition with $\mathcal{C}^1$ functions. We give an explicit example in Appendix \ref{appendix: useful examples} of a deterministic function $f$ with $f^*_{t,t+h}=\mathcal{O}_p(h)$ which is not in $\mathcal{S}^\text{CBP}_t$. We also show that it fails to keep this property under composition with $\mathcal{C}^1$ functions in general.
\end{remark}

\subsection{The differential as an equivalence relation}

We say that two continuous semimartingales with CBP at time $t$ have the same differential at $t$ if the process given by their difference has differential zero at $t$. This defines a binary relation on the space $\mathcal{S}^\text{CBP}_t$. We show this is an equivalence relation which allows us to formally define differentials as the equivalence classes of $\mathcal{S}^\text{CBP}_t$.

\begin{definition} \label{def: equivalence relation for d^p}
	Let $X$ and $Y\in\mathcal{S}^\text{CBP}_t$. We write that $d^p(X)_t = d^p(Y)_t \iff d^p(X-Y)_t = 0$.
\end{definition}

\begin{remark}
	In ordinary calculus this implies that $|(f-g)'(t)|=0$ from which we deduce that $f'(t)=g'(t)$.
\end{remark}

\begin{lemma} \label{lemma: tilde t is an equivalence relation}
	The binary relation `$d^p(X)_t = d^p(Y)_t$' defines an equivalence relation on $\mathcal{S}^\text{CBP}_t$.
\end{lemma}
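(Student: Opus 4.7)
The plan is to verify the three defining properties of an equivalence relation in turn. Reflexivity is immediate: $X - X$ is the identically zero process, whose canonical decomposition has every part equal to zero, so trivially $d^p(X-X)_t = 0$.

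For symmetry, I would take $X, Y \in \mathcal{S}^\text{CBP}_t$ with $d^p(X-Y)_t = 0$, and write the canonical decomposition $X - Y = (X-Y)_0 + A + M$, with $A = A^1 - A^2$. The canonical decomposition of $Y - X$ is then $-(X-Y)_0 + (-A) + (-M)$, and the unique representation of $-A$ as a difference of non-decreasing processes starting at zero simply swaps the roles of $A^1$ and $A^2$. Since $(-M)^*_{t,t+h} = M^*_{t,t+h}$, every hypothesis of Definition \ref{def: zero differential d^p} is preserved, giving $d^p(Y-X)_t = 0$.

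The main work is transitivity, and here I would lean on the alternative characterisation just proved, which says $d^p(X)_t = 0$ is equivalent to $TV(A)_{t,t+h} = o_p(h)$ and $[M]_{t,t+h} = o_p(h)$. Suppose $d^p(X-Y)_t = 0$ and $d^p(Y-Z)_t = 0$, with canonical decompositions $X-Y = (X-Y)_0 + A_1 + M_1$ and $Y-Z = (Y-Z)_0 + A_2 + M_2$. The canonical decomposition of $X-Z = (X-Y) + (Y-Z)$ has finite-variation part $A_1+A_2$ and local martingale part $M_1+M_2$. Sub-additivity of total variation gives
\begin{equation*}
TV(A_1+A_2)_{t,t+h} \leq TV(A_1)_{t,t+h} + TV(A_2)_{t,t+h} = o_p(h) + o_p(h) = o_p(h),
\end{equation*}
and bilinearity of the bracket together with the Kunita--Watanabe inequality $|[M_1,M_2]_{t,t+h}| \leq [M_1]_{t,t+h}^{1/2}[M_2]_{t,t+h}^{1/2}$ yields the elementary bound $[M_1+M_2]_{t,t+h} \leq 2[M_1]_{t,t+h} + 2[M_2]_{t,t+h} = o_p(h)$. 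Applying the alternative characterisation in reverse gives $d^p(X-Z)_t = 0$, completing transitivity.

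I do not expect a serious obstacle; the only subtle point is that one should not try to verify transitivity by combining the $A^1$, $A^2$ and $M^*$ conditions directly, because the canonical splitting $A_1+A_2 = (A_1+A_2)^1 - (A_1+A_2)^2$ is not simply the sum of the splittings of $A_1$ and $A_2$. Passing through the alternative characterisation in terms of $TV(A)$ and $[M]$ sidesteps this entirely. An equivalent route would be to imitate the proof that $\mathcal{S}^\text{CBP}_t$ is a vector subspace, replacing every $\mathcal{O}_p(h)$ by $o_p(h)$, which shows that the set $\{X \in \mathcal{S}^\text{CBP}_t : d^p(X)_t = 0\}$ is itself a vector subspace, and transitivity follows at once.
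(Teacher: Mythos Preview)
Your proof is correct and follows essentially the same route as the paper. The only cosmetic difference is in the martingale part of transitivity: the paper works directly with the running maximum, using $\left((M+N)^*_{t,t+h}\right)^2 \leq 2\left(M^*_{t,t+h}\right)^2 + 2\left(N^*_{t,t+h}\right)^2$, whereas you pass fully to the alternative characterisation and bound $[M_1+M_2]_{t,t+h}$ via Kunita--Watanabe; since Lemma~\ref{lemma: M has differential zero iff its QV has differential zero} makes these two conditions equivalent, the arguments are interchangeable.
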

\begin{proof}
	Symmetry and reflexivity are clear. Let $X$, $Y$ and $Z\in\mathcal{S}^\text{CBP}_t$. Suppose $d^p(X)_t = d^p(Y)_t$ and $d^p(Y)_t = d^p(Z)_t$. This means $d^p(X-Y)\sim_t 0$ and $d^p(Y-Z) \sim_t 0$. Suppose the processes $(X-Y)$ and $(Y-Z)$ have canonical decompositions given by $(X-Y) = (X-Y)_0 + A + M$ and $(Y-Z) = (Y-Z)_0 + B + N$ respectively. By definition of the differential zero, we have that $TV(A)_{t,t+h}, TV(B)_{t,t+h} = o_p(h)$ and $(M^*_{t,t+h})^2, (N^*_{t,t+h})^2 = o_p(h)$. Using Lemma \ref{lemma: properties of little and big o nonation in probability} and that 
	\begin{align*}
		TV(A+B)_{t,t+h} &\leq TV(A)_{t,t+h}+TV(B)_{t,t+h}, \\
		\left((M+N)^*_{t,t+h}\right)^2 &\leq 2\left(M^*_{t,t+h}\right)^2+2\left(N^*_{t,t+h}\right)^2,
	\end{align*}
	we deduce that $d^p(X-Z)_t= 0$. Thus $d^p(X)_t = d^p(Z)_t$. That is, the binary relation is transitive.
\end{proof}

\subsection{The space of It\^o stochastic differentials I}

We quotient out the space of continuous semimartingales with chords bounded in probability  by the equivalence relation of Definition \ref{def: equivalence relation for d^p}, call it $\sim_t$ for short, to define our notion of stochastic differentials.
\begin{definition}[Space of stochastic differentials]
	Let the \emph{space of stochastic differentials of $\mathbb{R}^m$-valued processes at }$t$ be 
	\begin{equation*}
		\mathds{D}_t\left(\mathbb{R}^m\right) \coloneqq  \sfrac{\mathcal{S}^\text{CBP}_t\left(\mathbb{R}^m\right)}{\sim_t}.
	\end{equation*}
	For a process $X\in\mathcal{S}^\text{CBP}_t\left(\mathbb{R}^m\right)$, we denote its corresponding element in $\mathds{D}_t\left(\mathbb{R}^m\right)$ by $d^p(X)_t$ just as we have done so far. Note that in Section \ref{section: financial applications}, we also denote elements of $\mathds{D}_t\left(\mathbb{R}^m\right)$ by greek letters like $\eta$. When $m=1$, we simply write $\mathds{D}_t$ for $\mathds{D}_t(\mathbb{R})$.
\end{definition}
Differentials of processes at time $t$ are equivalence classes of the space of processes with chords bounded in probability at time $t$. We now define operations on $\mathds{D}_t$ consistent with the ones that the usual differential $dX_t$ obeys and similar to those considered by It\^o in \cite{ito1975stochasticdifferentials}. However note that It\^o considered differentials as random valued functions of time intervals and not of time instants as we do. This difference is important because it makes our notion of the differential truly infinitesimal at an instant in time.

We write $H\in\mathcal{F}_t\left(\mathbb{R}\right)$ to mean that $H$ is a $\mathbb{R}$-valued, $\mathcal{F}_t$-measurable random variable.

\begin{definition} \label{def: operations on differentials} Let $d^p(X)_t$, $d^p(Y)_t \in\mathds{D}_t\left(\mathbb{R}^m\right)$ and $H\in\mathcal{F}_t(\mathbb{R})$. We define the operations of \emph{addition} (A), \emph{product} (P) and \emph{multiplication by} $\mathcal{F}_t$-\emph{measurable} (M) as
	\begin{align*}
		d^p(X)_t \oplus d^p(Y)_t &\coloneqq d^p(X+Y)_t, \tag{A}\\
		d^p(X)_t \star d^p(Y)_t &\coloneqq d^p\left(\lbrack X,Y\rbrack\right)_t, \tag{P}\\
		H d^p(X)_t &\coloneqq d^p(H X)_t, \tag{M}
	\end{align*}
\end{definition}
In actual fact, we can think of (A) just as the usual sum of sets.
\begin{lemma} \label{lemma: linearity of differentials}
	For any $d^p(X)_t$, $d^p(Y)_t\in\mathds{D}_t\left(\mathbb{R}^m\right)$, $d^p(X)_t+ d^p(Y)_t=d^p(X+Y)_t=d^p(X)_t\oplus d^p(Y)_t$.
\end{lemma}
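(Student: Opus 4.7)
The plan is to show that $d^p(X)_t + d^p(Y)_t$, understood as the Minkowski sum of the two equivalence classes, coincides as a set with $d^p(X+Y)_t$. The second equality $d^p(X+Y)_t = d^p(X)_t \oplus d^p(Y)_t$ is just the definition (A), so the content lies entirely in the first equality. I would set up the proof by unwinding the definition: a process $Z$ lies in $d^p(X)_t$ iff $Z\in\mathcal{S}^\text{CBP}_t$ and $d^p(Z-X)_t=0$, and analogously for $Y$. The Minkowski sum is then
\begin{equation*}
d^p(X)_t + d^p(Y)_t = \{Z+W : Z\sim_t X,\ W\sim_t Y\}.
\end{equation*}

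For the inclusion $(\subseteq)$, I pick any $Z\sim_t X$ and $W\sim_t Y$ and check that $(Z+W)\sim_t (X+Y)$, i.e.\ that $(Z-X)+(W-Y)$ has differential zero at $t$. Since both summands have differential zero by hypothesis, this reduces to showing that the zero-differential property is closed under addition. This is the same computation carried out inside the proof of Lemma \ref{lemma: tilde t is an equivalence relation}: if the finite variation parts satisfy $TV(A)_{t,t+h}=o_p(h)$ and $TV(B)_{t,t+h}=o_p(h)$, subadditivity of total variation gives $TV(A+B)_{t,t+h}=o_p(h)$; and if the martingale parts satisfy $(M^*_{t,t+h})^2=o_p(h)$ and $(N^*_{t,t+h})^2=o_p(h)$, then the elementary bound $((M+N)^*_{t,t+h})^2\leq 2(M^*_{t,t+h})^2+2(N^*_{t,t+h})^2$ together with Lemma \ref{lemma: properties of little and big o nonation in probability} gives $((M+N)^*_{t,t+h})^2=o_p(h)$. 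Hence $(Z-X)+(W-Y)$ has differential zero, so $Z+W\in d^p(X+Y)_t$.

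For the inclusion $(\supseteq)$, given any $U\in d^p(X+Y)_t$, I want to exhibit a decomposition $U=Z+W$ with $Z\sim_t X$ and $W\sim_t Y$. The natural choice is $Z:=X$ and $W:=U-X$. Since $\mathcal{S}^\text{CBP}_t$ is a vector subspace of the continuous semimartingales (proved earlier), $W=U-X$ lies in $\mathcal{S}^\text{CBP}_t$, and trivially $Z=X\sim_t X$. Finally, $W-Y=U-(X+Y)$ has differential zero at $t$ by hypothesis on $U$, so $W\sim_t Y$. This produces the required decomposition.

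There is no real obstacle here: once the zero class is known to be closed under addition (which is immediate from Lemma \ref{lemma: properties of little and big o nonation in probability} and the subadditivity estimates above) and $\mathcal{S}^\text{CBP}_t$ is known to be a vector space, the statement is an instance of the general linear-algebra fact that if the kernel of an equivalence relation on a vector space is a subspace, then the Minkowski sum of two cosets is the coset of their sum. The only mild subtlety to be careful about is that the product relation $\star$ and multiplication $H\,d^p(X)_t$ defined in (P) and (M) are not at issue — we are only verifying compatibility of the additive structure, and this reduces cleanly to what was already shown in Lemma \ref{lemma: tilde t is an equivalence relation}.
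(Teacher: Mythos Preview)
Your proof is correct. The forward inclusion $(\subseteq)$ matches the paper's argument essentially verbatim: both reduce to the closure of the zero differential under addition, via subadditivity of total variation and the running-maximum estimate, exactly as in the proof of Lemma \ref{lemma: tilde t is an equivalence relation}.

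For the reverse inclusion you take a slightly different, more direct route. The paper argues abstractly: it shows that the Minkowski sum $d^p(X)_t+d^p(Y)_t$ is itself a full equivalence class (any two of its elements are $\sim_t$-equivalent), and then invokes the fact that distinct equivalence classes are disjoint, so a class contained in another must equal it. You instead produce an explicit decomposition: given $U\sim_t X+Y$, write $U=X+(U-X)$ and check $U-X\sim_t Y$. Your approach is the standard linear-algebra coset argument and is arguably cleaner, since it avoids the detour through ``the sum is an equivalence class''; the paper's approach has the minor advantage that it never needs to name a specific splitting. Both rely on the same ingredients (closure of the zero class under addition and $\mathcal{S}^\text{CBP}_t$ being a vector space), so there is no substantive difference in depth.
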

For the remainder of this subsection, all capital letters refer to processes and not random variables. We drop the time index for brevity.
\begin{proof}
  Let $X\in d^p(X)_t$ and $Y\in d^p(Y)_t$. By definition, this means $d^p(X-\tilde{X})_t= 0$ and $d^p(Y-\tilde{Y})_t= 0$. By transitivity, we deduce that 
	\begin{equation*}
		d^p\left((X+Y)-(\tilde{X}+\tilde{Y})\right)_t = 0.
	\end{equation*}
	That is $X+Y\in d^p(X+Y)_t \Rightarrow d^p(X)+d^p(Y)_t \subseteq d^p(X+Y)_t$. For the reverse inclusion, we prove that $d^p(X)_t+d^p(Y)_t$ is an equivalence class itself. This suffices because if an equivalence class is contained in another, they must be one and the same. Let $\tilde{Z}$ and $\hat{Z}$ be arbitrary elements in $d^p(X)_t +d^p(Y)_t$. Then there exist $\tilde{X}$, $\hat{X}\in d^p(X)_t$, and $\tilde{Y}$, $\hat{Y}\in d^p(Y)_t$ such that $\tilde{Z} = \tilde{X}+\tilde{Y}$ and $\hat{Z}=\hat{X}+\hat{Y}$. Then $d^p(\tilde{Z}-\hat{Z})_t=d^p\left((\tilde{X}-\hat{X})+(\tilde{Y}-\hat{Y})\right)_t = 0$ since $d^p(\tilde{X}-\hat{X})_t = 0$ and $d^p(\tilde{Y}-\hat{Y})_t = 0$. Hence $d^p(\tilde{Z})_t = d^p(\hat{Z})_t = 0$.
\end{proof}

\begin{lemma} \label{lemma: the zero differential is absorbing under operation star}
	\begin{equation*}
		\eta \star 0 = 0 = 0 \star \eta,~\forall \eta \in\mathds{D}_t\left(\mathbb{R}^m\right),
	\end{equation*}
	where $0$ is understood to be the zero differential in $\mathds{D}_t\left(\mathbb{R}^m\right)$.
\end{lemma}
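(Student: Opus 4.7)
The plan is to reduce $\eta \star 0 = 0$ to a statement about the total variation of a quadratic covariation process, and to deduce $0 \star \eta = 0$ from the symmetry $[X,Y] = [Y,X]$ of the quadratic covariation. Concretely, I pick representatives $X \in \eta$ with canonical decomposition $X = X_0 + A + M$ and $Y$ with $d^p(Y)_t = 0$ and canonical decomposition $Y = Y_0 + B + N$. By definition of $\star$, I need to show $d^p([X,Y])_t = 0$. Since the quadratic covariation ignores finite variation parts, $[X,Y] = [M,N]$, which is a continuous finite variation process. By Lemma \ref{lemma: A has differential 0 iff total variation is little op h}, it therefore suffices to establish
\begin{equation*}
TV([M,N])_{t,t+h} = o_p(h).
\end{equation*}

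The key tool will be the Kunita--Watanabe inequality in its total variation form: for any $0 \leq s \leq u$,
\begin{equation*}
TV([M,N])_{s,u} \;\leq\; \sqrt{[M]_{s,u}} \cdot \sqrt{[N]_{s,u}}.
\end{equation*}
This follows from the pointwise Cauchy--Schwarz estimate $\sum_\pi |[M,N]_{t_i, t_{i+1}}| \leq \sum_\pi \sqrt{[M]_{t_i, t_{i+1}}}\sqrt{[N]_{t_i, t_{i+1}}}$ applied to any partition $\pi$, followed by a discrete Cauchy--Schwarz in the partition sum.

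Once this inequality is in hand, the conclusion is immediate from the asymptotic calculus already developed. Since $X \in \mathcal{S}^\text{CBP}_t = \mathcal{S}^\text{VBP}_t$ (Corollary \ref{lemma: VBP is equivalent to CBP}), we have $[M]_{t,t+h} = \mathcal{O}_p(h)$. Since $d^p(Y)_t = 0$, Lemma \ref{lemma: M has differential zero iff its QV has differential zero} gives $[N]_{t,t+h} = o_p(h)$. Combining via Lemma \ref{lemma: properties of little and big o nonation in probability}, $[M]_{t,t+h} \cdot [N]_{t,t+h} = \mathcal{O}_p(h) \cdot o_p(h) = o_p(h^2)$, and taking square roots yields $TV([M,N])_{t,t+h} = o_p(h)$. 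By Lemma \ref{lemma: A has differential 0 iff total variation is little op h} this means $d^p([M,N])_t = 0$, so $\eta \star 0 = d^p([X,Y])_t = 0$. Finally, $0 \star \eta = d^p([Y,X])_t = d^p([X,Y])_t = 0$ by symmetry of the covariation.

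The only real obstacle is the Kunita--Watanabe step: one must be careful to cite (or verify in the appendix) the total variation form of the inequality rather than just the more familiar pointwise bound $|[M,N]_{s,u}| \leq \sqrt{[M]_{s,u}\,[N]_{s,u}}$, since what is needed is control of the total variation, not the terminal value. Everything else is a book-keeping application of the asymptotic rules already proved in Lemma \ref{lemma: properties of little and big o nonation in probability}. Implicitly this computation also shows that $\star$ is well-defined on equivalence classes containing $0$: replacing $Y$ by $Y + Z$ with $d^p(Z)_t = 0$ changes $[X,Y]$ by $[X,Z]$, which by the same argument has differential zero.
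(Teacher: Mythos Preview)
Your proof is correct and follows essentially the same route as the paper: pick representatives, reduce to showing $d^p([X,Y])_t=0$, and control the covariation via the Kunita--Watanabe/Cauchy--Schwarz bound combined with $[X]_{t,t+h}=\mathcal{O}_p(h)$ and $[Y]_{t,t+h}=o_p(h)$. In fact you are more careful than the paper on one point: the paper only cites the pointwise bound $|[X,Y]_{t,u}|\leq\sqrt{[X]_{t,u}[Y]_{t,u}}$ and concludes $[X,Y]_{t,t+h}=o_p(h)$, whereas the zero-differential criterion for the finite variation process $[X,Y]$ (Lemma \ref{lemma: A has differential 0 iff total variation is little op h}) genuinely requires the total variation form $TV([X,Y])_{t,t+h}\leq\sqrt{[X]_{t,t+h}[Y]_{t,t+h}}$, which you correctly invoke.
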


\begin{proof}
	Suppose $\eta \in \mathds{D}_t\left(\mathbb{R}^m\right)$. Then there exists $Y\in \mathcal{S}^\text{CBP}_t\left(\mathbb{R}^m\right)$ such that $\eta = d^p(Y)_t$. There is also some $X\in\mathcal{S}^\text{CBP}_t\left(\mathbb{R}^m\right)$ such that $d^p(X)_t = 0$. We want to show that $d^p(X)_t \star d^p(Y)_t = 0$ which by definition means that we want to show $d^p\lbrack X,Y\rbrack_t = 0$. It is known that for any $0\leq t\leq u$, we have
	\begin{equation*}
		\left \lvert \lbrack X,Y\rbrack_{t,u} \right \rvert \leq \left(\lbrack X,X\rbrack_{t,u}\lbrack Y,Y\rbrack_{t,u}\right)^{\frac{1}{2}}.
	\end{equation*}
	Hence, $\left \lvert \lbrack X,Y\rbrack_{t,u} \right \rvert \leq \left(o_p(\lvert u-t\rvert )\mathcal{O}_p(\lvert u-t\rvert )\right)^{\frac{1}{2}} = o_p(\lvert u-t\rvert)$ by Lemma \ref{lemma: properties of little and big o nonation in probability}. Thus,
	$\lbrack X,Y\rbrack_{t,t+h} \leq o_p(h)$.
\end{proof}

\subsection{It\^o stochastic differential equations}

We now show that if a semimartingale has a differential of zero over the time interval $[0,T]$ then it is constant in time. Note that it does not imply that it is a constant, but rather that it is equal to its initial value at time $0$, which might be a random variable.

\begin{lemma}[Uniqueness of solutions] \label{lemma: uniqueness of solutions}
	If $d^p(X)_t = 0 $ for all $t\in[0,T]$, then $X_t=X_0$, for all $t\in[0,T]$. 
\end{lemma}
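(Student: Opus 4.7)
The plan is to exploit the alternative characterisation that $d^p(X)_t = 0$ is equivalent to $TV(A)_{t,t+h} = o_p(h)$ together with $[M]_{t,t+h} = o_p(h)$, where $X = X_0 + A + M$ is the canonical decomposition. By uniqueness of the canonical decomposition of $X - X_0$ it then suffices to show, separately, that $A \equiv 0$ and $M \equiv 0$ on $[0,T]$, and both reduce to a common claim: a continuous non-decreasing adapted process $V$ with $V_0 = 0$ satisfying $V_{t,t+h} = o_p(h)$ for every $t \in [0,T]$ is indistinguishable from zero. For the finite-variation part I would apply the common claim to the Jordan pieces $V = A^1$ and $V = A^2$ (using that their sum is $TV(A)$). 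For the local-martingale part I would localise by $\tau_n = \inf\{s : |M_s| \geq n\} \wedge T$ so that $M^{\tau_n}$ is a bounded martingale satisfying $\mathbb{E}\bigl[(M_{T\wedge\tau_n} - M_0)^2\bigr] = \mathbb{E}\bigl[[M]_{T\wedge\tau_n}\bigr]$, apply the common claim to $V = [M]$ to obtain $[M]_{T \wedge \tau_n} = 0$ almost surely, conclude $M_{T \wedge \tau_n} = M_0$, and let $n \to \infty$.

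To prove the common claim I would first kill the absolutely continuous part of the Lebesgue--Stieltjes measure $dV(\omega)$ by a standard Fubini argument. Since $V$ is continuous and non-decreasing, almost every sample path $s \mapsto V_s(\omega)$ is differentiable Lebesgue-almost-everywhere. For each fixed $t$, passing to an almost-surely convergent subsequence of the in-probability convergence $V_{t,t+h}/h \to 0$ forces the pathwise derivative $V'(t, \omega)$ to equal $0$ almost surely at every $t$ where it exists. Applying Fubini on $[0,T] \times \Omega$ gives: for almost every $\omega$, $V'(t,\omega) = 0$ for Lebesgue-a.e.\ $t$, so the absolutely continuous part of $dV(\omega)$ vanishes almost surely.

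The main obstacle is ruling out a singular component of $dV(\omega)$, since a random singular mass can be supported on a Lebesgue-null set avoided by every deterministic $t$, so the pointwise-in-$t$ hypothesis does not obviously prevent it. To eliminate it I would attempt a Heine--Borel/Vitali-style covering: fix $\epsilon > 0$; at each $t \in [0,T]$ the $o_p(h)$ hypothesis yields $\eta_t > 0$ and $\delta_t > 0$ (as small as desired) with $\mathbb{P}(V_{t,t+h} > \epsilon h) < \delta_t$ for all $h \in (0, \eta_t]$; extracting a finite subcover produces a partition $0 = t_0 < t_1 < \cdots < t_n = T$ with $t_{i+1} - t_i \leq \eta_{t_i}$, so that the subadditivity $V_T \leq \sum_i V_{t_i, t_{i+1}}$ gives $V_T \leq \epsilon T$ on the intersection of the good events, whose complement has total probability at most $\sum_i \delta_{t_i}$. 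The delicate step is choosing the per-interval tolerances $\delta_{t_i}$ small enough to dominate the a priori uncontrolled cardinality $n$ of the cover while remaining compatible with the sizes $\eta_{t_i}$; this balancing is the principal technical difficulty I anticipate, and the point at which additional regularity on $X$ (such as the It\^o-process setting of Lemma \ref{lemma: ito processes have CBP}) is likely to make the argument go through cleanly.
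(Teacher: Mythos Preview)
Your reduction to a common claim about a non-decreasing process $V$ with $V_{t,t+h}=o_p(h)$ matches the paper's structure, and the obstacle you flag is real: working in probability, a Heine--Borel cover of $[0,T]$ gives no a~priori bound on the number $n$ of intervals, so you cannot make $\sum_i\delta_{t_i}$ small while keeping the $\eta_{t_i}$ compatible with the cover. However, no extra regularity on $X$ is needed to close this; the missing idea is to pass from convergence in probability to almost-sure convergence \emph{before} covering, and then argue pathwise.

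Concretely, the paper proceeds as follows. For each fixed $t$, extract a subsequence $\{n_k(t)\}$ along which $n_k(t)\cdot V_{t,\,t+1/n_k(t)}(\omega)\to 0$ for almost every $\omega$. Now fix $\omega$ and $\epsilon>0$: for every $t$ there is some $k$ with $V_{t,\,t+1/n_k(t)}(\omega)<\tfrac{\epsilon}{3T}\cdot\tfrac{1}{n_k(t)}$, a bound that \emph{scales with the interval length}. Cover $[0,T]$ by such intervals and extract a finite subcover $\{(t_i,s_i)\}$ by Heine--Borel; after discarding nested intervals the total length $\sum_i(s_i-t_i)$ is bounded by $3T$, so $V_{0,T}(\omega)\le\sum_i V_{t_i,s_i}(\omega)<\tfrac{\epsilon}{3T}\sum_i(s_i-t_i)\le\epsilon$. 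The cardinality of the cover is now irrelevant because each term contributes proportionally to its length: this is precisely what replaces your union bound in probability by a pathwise estimate that is automatically summable. Your Fubini step for the absolutely continuous part is then unnecessary---the paper does not split $dV$ into a.c.\ and singular pieces at all.
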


\begin{proof}
	Let $X$ have canonical decomposition $X_t = X_0 + A_t+M_t$ as before. Then $TV(A)_{t,t+h}=o_p(h)$ for all $t\in[0,T]$. In particular, this implies that the sequence $\left\{n\cdot TV(A)_{t,t+\frac{1}{n}}\right\}_{n\in\mathbb{N}}$ converges to zero in probability. Convergence in probability implies almost sure convergence on a subsequence. Hence, for each $t\in[0,T]$, there exists a subsequence $\left\{n_k(t)\right\}_{k\geq 1}$ such that
	\begin{equation*}
		n_k(t)\cdot TV(A)_{t,t+\frac{1}{n_k(t)}}(\omega)\rightarrow 0 \text{, as }k\rightarrow\infty \text{ and for a.a. }\omega\in\Omega.
	\end{equation*}
	That is, for almost all $\omega\in\Omega$ and for each $\epsilon>0$, there exists a large enough $K=K(\omega,\epsilon,t)\in\mathds{N}$ such that
	\begin{equation}
		n_k(t)\cdot TV(A)_{t,t+\frac{1}{n_k(t)}}(\omega)<\frac{\epsilon}{3T},~\forall k\geq K(\omega,\epsilon,t) \label{eq: almost sure bound on chords of A uniqueness lemma}
	\end{equation}
	For arbitrarily small $\delta>0$, we have
	\begin{equation*}
		\underset{t\in[0,T]}{\bigcup}\left(t,t+n^{-1}_{K(\omega,\epsilon,t)}(t)\right)\supseteq [\delta,T].
	\end{equation*}
	By the Heine--Borel theorem, a finite number of such open intervals cover $[\delta,T]$, call them $\left\{(t_i,s_i)\right\}_{i=0}^m$ where $t_0<\delta$ and $s_m>T$. Without loss of generality, we can assume that none of these open interval is a subinterval of any other (or else we simply remove them). Then, $\sum_{i=1}^{m}(s_i-t_i)< 3T$. Assume also that we chose $\delta\in(0,n^{-1}_{K(\omega,\epsilon,0)}(0))$ so that $\bigcup_{i=1}^m\left(t_i,s_i\right) \supseteq (0,T]$. Applying (\ref{eq: almost sure bound on chords of A uniqueness lemma}), we obtain that
	\begin{equation*}
		TV(A)_{0,T}(\omega) \leq \sum_{i=1}^{m}TV(A)_{t_i,s_i}(\omega) < \frac{\epsilon}{3T}\sum_{i=1}^{m}(s_i-t_i) < \epsilon.
	\end{equation*}
	The choice of $\epsilon>0$ was arbitrary so $A^*_{0,T}(\omega)=0$ which implies that $A_t(\omega) = A_0(\omega)$ for all $t\in[0,T]$. This is true for almost all $\omega\in\Omega$ so $A_t=0$ almost surely, for all $t\in[0,T]$. Using Lemma \ref{lemma: M has differential zero iff its QV has differential zero} and following similar steps we can show that $\lbrack M\rbrack_{0,T} = \lbrack X\rbrack_{0,T}=0$ almost surely. By Lemma $5.14$ of \cite{legall2016martingales}, this implies that $M_t=0$ for all $t\in[0,T]$ which completes the proof.
\end{proof}

We prove a stochastic version of Fundamental Theorem of Calculus but first we need a preliminary lemma to compute the total variation of integrals with respect to processes of finite variation. This result is most likely not new but we prove it here for completeness.

\begin{lemma}
	Let $H$ be a continuous and adapted process and $A$ be a continuous process of finite variation beginning at zero. Then
	\begin{equation*}
		TV\left(\int_{0}^{\cdot}H_udA_u\right) = \int_{0}^{t}\lvert H_u\rvert dTV(A)_u.
	\end{equation*}
\end{lemma}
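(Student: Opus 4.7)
The statement is essentially a stochastic restatement of a classical measure-theoretic identity: for a signed Radon measure $\nu$ and a $\nu$-integrable function $f$, the total variation of the signed measure $f\cdot\nu$ equals $|f|\cdot|\nu|$. The plan is to prove both inequalities separately. Since sample paths of $A$ have finite variation, they define, pathwise, a signed Radon measure $\mu_A$ on $[0,T]$ with $|\mu_A|$ given precisely by the (pathwise) Lebesgue--Stieltjes measure associated to $TV(A)$, and the process $I_t \coloneqq \int_0^t H_u\, dA_u$ is the distribution function of $H\cdot \mu_A$. Everything is carried out $\omega$-by-$\omega$.

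For the upper bound, fix $t\in[0,T]$ and a partition $\pi=\{0=t_0<t_1<\dots<t_n=t\}$. On each subinterval,
\[
|I_{t_{i+1}}-I_{t_i}| = \left|\int_{t_i}^{t_{i+1}} H_u\, dA_u\right| \leq \int_{t_i}^{t_{i+1}} |H_u|\, dTV(A)_u,
\]
by the standard bound of a Lebesgue--Stieltjes integral against a signed measure by the integral of the modulus against the total variation. Summing over the partition yields $\sum_i |I_{t_{i+1}}-I_{t_i}| \leq \int_0^t |H_u|\, dTV(A)_u$; taking the supremum over partitions gives $TV(I)_t \leq \int_0^t |H_u|\, dTV(A)_u$.

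For the reverse inequality, use the Hahn decomposition $[0,T]=P\sqcup N$ of $\mu_A$ (so that $dA^1 = \mathds{1}_P\, d\mu_A$ and $dA^2 = -\mathds{1}_N\, d\mu_A$ are mutually singular, giving the canonical decomposition $A=A^1-A^2$), together with the pointwise splitting $H=H^+-H^-$. Expanding the integral and regrouping,
\[
I_t = \underbrace{\int_0^t H^+ dA^1 + \int_0^t H^- dA^2}_{B^1_t} \;-\; \underbrace{\int_0^t H^- dA^1 + \int_0^t H^+ dA^2}_{B^2_t},
\]
with $B^1,B^2$ continuous, non-decreasing, and starting at zero. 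The measures $dB^1$ and $dB^2$ live respectively on $(P\cap\{H\ge 0\})\cup(N\cap\{H\le 0\})$ and $(P\cap\{H\le 0\})\cup(N\cap\{H\ge 0\})$, which are disjoint up to $|\mu_A|$-null sets; therefore $I=B^1-B^2$ is the canonical decomposition of $I$, and
\[
TV(I)_t = B^1_t + B^2_t = \int_0^t (H^+{+}H^-)\,(dA^1+dA^2) = \int_0^t |H_u|\, dTV(A)_u.
\]

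The main delicate point will be the mutual singularity argument in the lower bound: one must justify that the two non-decreasing integrals just constructed really give the canonical Jordan decomposition of $I$, so that their sum computes $TV(I)$ exactly rather than an upper estimate. This reduces to observing that $\{H>0\}$, $\{H<0\}$ are disjoint and that $P$, $N$ are $\mu_A$-disjoint by Hahn, so the four pieces assembling $B^1$ and $B^2$ live on pairwise disjoint sets.
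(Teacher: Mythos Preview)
Your argument is correct and follows essentially the same route as the paper: both decompose $A=A^1-A^2$ into its Jordan components and $H=H^+-H^-$, then write $I=B^1-B^2$ with $B^1=\int H^+dA^1+\int H^-dA^2$ and $B^2=\int H^-dA^1+\int H^+dA^2$, and conclude $TV(I)=B^1+B^2=\int|H|\,dTV(A)$. Your additional partition-based upper bound is redundant once the decomposition step yields equality, but your explicit verification that $dB^1\perp dB^2$ (via the Hahn sets $P,N$ and $\{H>0\},\{H<0\}$) is a point the paper's proof takes for granted---it simply asserts that summing the two non-decreasing pieces gives the total variation without checking that the splitting is the canonical one.
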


\begin{proof}
	We use the notation $x^+ \coloneqq \max(0,x)$ and $x^- \coloneqq -\min(0,x)$. $A$ can uniquely be split as a difference of non-decreasing processes. Explicitly the splitting is given by
	\begin{equation*}
		A_t = \underbrace{\frac{1}{2}\left(TV(A)_t + A_t\right)}_{=A^1_t} - \underbrace{\frac{1}{2}\left(TV(A)_t - A_t\right)}_{=A^2_t}.
	\end{equation*}
Consequently its total variation is given by the sum of these processes. All we need to do is split the integral $\int_{0}^{t}H_udA_u$ as a difference of non-decreasing processes and sum those to get the result. We have
\begin{align*}
	\int_{0}^{t}H_udA_u &= \int_{0}^{t}\left(H^+_u-H^-u\right)d\left(A^1-A^2\right)_u \\ 
	&= \int_{0}^{t}\left(H^+_udA^1_u + H^-_udA^2_u\right) -  \int_{0}^{t}\left(H^-dA^1_u + H^+dA^2_u\right), \intertext{and so}
	TV\left(\int_{0}^{\cdot}H_udA_u\right)_t &= \int_{0}^{t}\left(H^+_udA^1_u + H^-_udA^2_u\right)  +  \int_{0}^{t}\left(H^-dA^1_u + H^+dA^2_u\right) \\
	&= \int_{0}^{t}\lvert H_u\rvert d\left(A^1+A^2\right)_u = \int_{0}^{t}\lvert H_u\rvert dTV(A)_u,
\end{align*}
where we use that $x^+ + x^- = \lvert x\rvert$.
\end{proof}

\begin{theorem}[Stochastic Fundamental Theorem of Calculus] \label{thm: stochastic version of ftc}
	Let $X\in\mathcal{S}^\text{CBP}_t$ and $H$ be an adapted, sample continuous process. Then the stochastic integral $\int_{0}^{\cdot}H_udX_u \in\mathcal{S}^\text{CBP}_t$ and its differential satisfies
	\begin{equation*}
		d^p\left(\int_{0}^{\cdot}H_udX_u\right) = H_t d^p(X)_t.
	\end{equation*}
\end{theorem}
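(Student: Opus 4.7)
The plan is to set $Y_s \coloneqq \int_0^s H_u\, dX_u$, whose canonical decomposition is $Y = \int_0^\cdot H_u\, dA_u + \int_0^\cdot H_u\, dM_u$ since $H$ is continuous and adapted. By operation (M) combined with Definition~\ref{def: equivalence relation for d^p}, the assertion $d^p(Y)_t = H_t\, d^p(X)_t$ is equivalent to $d^p(Y - H_t X)_t = 0$. The key identity is the telescoping
\begin{equation*}
 Y_s - Y_t - H_t\bigl(X_s - X_t\bigr) = \int_t^s (H_u - H_t)\, dA_u + \int_t^s (H_u - H_t)\, dM_u, \qquad s\geq t,
\end{equation*}
which exhibits the difference on $[t,t+h]$ as a finite variation process plus a local martingale, each vanishing at time $t$. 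I can then verify the zero differential via the alternative characterisation provided by Lemmas~\ref{lemma: A has differential 0 iff total variation is little op h} and~\ref{lemma: M has differential zero iff its QV has differential zero}.

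For the finite variation piece, applying the preceding lemma on $[0,t+h]$ and on $[0,t]$ and subtracting yields
\begin{equation*}
 TV\!\left(\int_0^\cdot (H_u - H_t)\, dA_u\right)_{\!t,t+h} = \int_t^{t+h} |H_u - H_t|\, dTV(A)_u \leq \Bigl(\sup_{u\in(t,t+h)}|H_u - H_t|\Bigr)\cdot TV(A)_{t,t+h}.
\end{equation*}
Sample-path continuity of $H$ at $t$ together with Lemma~\ref{lemma: sup of continuous process is op1} makes the supremum $o_p(1_h)$, while $X\in\mathcal{S}^\text{CBP}_t = \mathcal{S}^\text{VBP}_t$ (Corollary~\ref{lemma: VBP is equivalent to CBP}) forces $TV(A)_{t,t+h} = \mathcal{O}_p(h)$. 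The product is $o_p(h)$ by Lemma~\ref{lemma: properties of little and big o nonation in probability}. The martingale piece is analogous: setting $N_s \coloneqq \int_t^s (H_u - H_t)\, dM_u$,
\begin{equation*}
 [N]_{t,t+h} = \int_t^{t+h} (H_u - H_t)^2\, d[M]_u \leq \Bigl(\sup_{u\in(t,t+h)}|H_u - H_t|\Bigr)^{\!2}\cdot [M]_{t,t+h} = o_p(1_h)\cdot \mathcal{O}_p(h) = o_p(h),
\end{equation*}
so that $d^p(Y - H_t X)_t = 0$.

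To close the argument I would verify $Y\in\mathcal{S}^\text{CBP}_t$ by writing $Y = (Y - H_t X) + H_t X$. The first summand has differential zero and therefore lies in $\mathcal{S}^\text{CBP}_t$; for the second, $H_t$ is $\mathcal{F}_t$-measurable, so $|H_t| = \mathcal{O}_p(1_h)$, whence $TV(H_t A)_{t,t+h} = |H_t|\, TV(A)_{t,t+h} = \mathcal{O}_p(h)$ and $[H_t M]_{t,t+h} = H_t^2\, [M]_{t,t+h} = \mathcal{O}_p(h)$. Linearity of $\mathcal{S}^\text{CBP}_t$ delivers the conclusion. The main subtlety is that $H_t M$ is only a local martingale with respect to the shifted filtration $\{\mathcal{F}_s : s\geq t\}$ rather than the original one, but since $d^p(\cdot)_t$ only probes behaviour on $(t,t+h)$, the decomposition of $H_t X$ in that window is indeed $H_t A + H_t M$, and all the variation estimates above are invariant under this shift.
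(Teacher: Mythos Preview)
Your proposal is correct and follows essentially the same route as the paper: both reduce the claim to showing $d^p\bigl(\int_t^{\cdot}(H_u-H_t)\,dX_u\bigr)_t=0$, bound the total variation of the finite-variation part by $H^*_{t,t+h}\cdot TV(A)_{t,t+h}=o_p(1_h)\cdot\mathcal{O}_p(h)$ via the preceding lemma, and bound the quadratic variation of the martingale part analogously. If anything, your argument is slightly more thorough than the paper's in explicitly verifying $\int_0^\cdot H_u\,dX_u\in\mathcal{S}^\text{CBP}_t$ and flagging the shifted-filtration issue for $H_tM$.
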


\begin{proof}
	Let $X=X_0+A+M$ have its usual decomposition. The fact that $\int_{0}^{\cdot}H_udX_u$ is a continuous semimartingale over $[0,T]$ follows from the definition of the It\^o integral. The process $\left\{H_tX_{t,s}:s\geq t\right\}\in\mathcal{S}^\text{CBP}_t$ since $X\in\mathcal{S}^\text{CBP}_t$. Thus, it is necessary and sufficient to prove that the process $\left\{Y_s \coloneqq \int_{t}^{s}H_udX_u -  H_tX_{t,s}\right\}_{s\geq t}$  satisfies $d^p(Y)_t = 0$. This process has canonical decomposition $Y_s = Y_t+Y_s^1+Y_s^2$ where $Y_s^1 = \int_{t}^{s}H_{t,u}dA_u$ and $Y_s^2 = \int_{t}^{s}H_{t,u}dM_u$ are its finite variation and martingale parts respectively. We have that
	\begin{equation*}
		 TV\left(Y^1\right)_{t,t+h}  =  \int_{t}^{t+h}\lvert H_{t,u}\rvert d\left(TV(A)\right)_u \leq H^*_{t,t+h}\cdot TV(A)_{t,t+h} = o_p(h),
	\end{equation*} 
	using that $TV(A)_{t,t+h} = \mathcal{O}_p(h)$ from Lemma \ref{lemma: VBP is equivalent to CBP}, $H^*_{t,t+h} = o_p(1_h)$ from Lemma \ref{lemma: sup of continuous process is op1} and finally Lemma \ref{lemma: properties of little and big o nonation in probability}. For the martingale part, we use that $\lbrack Y^2\rbrack_{t,t+h}=\int_{t}^{t+h}(H_{t,u})^2d\lbrack M\rbrack_u$ and similar steps,to find that $\lbrack Y^2\rbrack_{t,t+h} = o_p(h)$. Hence $d^p(Y)_t = 0$.
\end{proof}

We require that the integrator has variations bounded in probability and this condition may not be weakened. Here is an example where the theorem fails if this condition is not met, even for a differentiable deterministic function.

\begin{example}
	Consider $H_t = \sqrt{t}$ and $X_t = \sqrt{t}$. Then 
	\begin{equation*}
		\int_{0}^{t}H_u dX_u = \int_{0}^{t}\sqrt{u}d\left(\sqrt{u}\right) = \frac{1}{2}\int_{0}^{t}du = \frac{t}{2}. 
	\end{equation*}
In this case,
\begin{equation*}
	d^p\left(\int_{0}^{t}H_u dX_u\right)_0 = \frac{1}{2}d^p(t)_0 \neq 0 
\end{equation*}
even though $H_0 = 0$. Thus the Fundamental Theorem of Calculus does not hold in this case. While we can still define $d^p(X)_t$ even if $X$ is not differentiable, it will not obey the FTC. The differential $d^p\left(\int_{0}^{\cdot}H_udX_u\right)_0$ depends upon information of $H$ in a neighbourhood of $t=0$, not just at $t=0$ if $H$ is not differentiable.
\end{example}

\begin{theorem}[Chain rule] \label{thm: stochastic chain rule}
	Let $f\in\mathcal{C}^2$ and $X\in\mathcal{S}^\text{CBP}_t$. Then $f(X) \in \mathcal{S}^\text{CBP}_t$ and 
	\begin{equation*}
		d^p\left(f(X)\right)_t = f'(X_t)d^p(X)_t + \frac{1}{2}f''(X_t)d^p\left(\lbrack X\rbrack\right)_t.
	\end{equation*}
\end{theorem}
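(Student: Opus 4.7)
The plan is to reduce the statement to an application of the classical It\^o formula together with the stochastic Fundamental Theorem of Calculus (Theorem \ref{thm: stochastic version of ftc}) and linearity of the differential. Specifically, for $f \in \mathcal{C}^2$ and any continuous semimartingale $X$, the classical It\^o formula gives
\begin{equation*}
f(X_s) = f(X_0) + \int_0^s f'(X_u)\, dX_u + \tfrac{1}{2}\int_0^s f''(X_u)\, d[X]_u,
\end{equation*}
for every $s \geq 0$. The program is then to verify that the right-hand side lies in $\mathcal{S}^{\text{CBP}}_t$ and to compute its differential termwise.

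First I would check that both integrals qualify as inputs to the stochastic FTC. The hypothesis $X \in \mathcal{S}^{\text{CBP}}_t = \mathcal{S}^{\text{VBP}}_t$ (Corollary \ref{lemma: VBP is equivalent to CBP}) gives $[X]_{t,t+h} = \mathcal{O}_p(h)$; since $[X]$ is non-decreasing and continuous we have $TV([X])_{t,t+h} = [X]_{t,t+h} = \mathcal{O}_p(h)$ and zero martingale part, so $[X] \in \mathcal{S}^{\text{CBP}}_t$. The integrands $f'(X)$ and $f''(X)$ are adapted and sample-continuous because $X$ is a continuous semimartingale and $f \in \mathcal{C}^2$. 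Therefore Theorem \ref{thm: stochastic version of ftc} applies to both integrals, yielding that each is in $\mathcal{S}^{\text{CBP}}_t$ with
\begin{equation*}
d^p\!\left(\int_0^\cdot f'(X_u)\, dX_u\right)_{\!t} = f'(X_t)\, d^p(X)_t, \qquad d^p\!\left(\int_0^\cdot f''(X_u)\, d[X]_u\right)_{\!t} = f''(X_t)\, d^p([X])_t.
\end{equation*}

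Since $\mathcal{S}^{\text{CBP}}_t$ is a vector subspace and $f(X_0)$ is a time-constant process, the representation above shows that $f(X) \in \mathcal{S}^{\text{CBP}}_t$. Applying the addition operation (A) on differentials and the multiplication operation (M) by the $\mathcal{F}_t$-measurable random variables $f'(X_t)$ and $\tfrac{1}{2}f''(X_t)$, we obtain the claimed identity
\begin{equation*}
d^p(f(X))_t = f'(X_t)\, d^p(X)_t + \tfrac{1}{2}\, f''(X_t)\, d^p([X])_t.
\end{equation*}

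The main obstacle I anticipate is a bookkeeping one rather than a deep one: one must be careful that the equivalence-class arithmetic on $\mathds{D}_t$ really does correspond to the pointwise sum of processes above, which is why the preceding lemma identifying $d^p(X)_t \oplus d^p(Y)_t$ with the usual sum of sets, together with Lemma \ref{lemma: uniqueness of solutions} (to rule out spurious non-zero differentials), is essential. If one instead wished to avoid invoking the classical It\^o formula, the main technical difficulty would be controlling the Taylor remainder $f(X_{t+h}) - f(X_t) - f'(X_t)X_{t,t+h} - \tfrac{1}{2} f''(X_t) (X_{t,t+h})^2$ in the $o_p(h)$ sense via a partition argument, which is precisely the content of the classical proof; it seems cleaner to invoke it as a black box here.
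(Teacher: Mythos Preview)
Your proof is correct and follows essentially the same route as the paper: both invoke the classical It\^o formula and reduce to the estimates underlying Theorem~\ref{thm: stochastic version of ftc}. The only cosmetic difference is that the paper defines $Y_s \coloneqq f(X_s) - f'(X_t)X_s - \tfrac12 f''(X_t)[X]_s$ and shows $d^p(Y)_t=0$ by repeating the FTC estimates directly, whereas you apply Theorem~\ref{thm: stochastic version of ftc} as a black box to each integral and then add; these are the same argument. One small remark: your aside that Lemma~\ref{lemma: uniqueness of solutions} is ``essential'' is not right here---that lemma concerns processes with vanishing differential on a whole interval and plays no role in this pointwise statement; only linearity of $d^p(\cdot)_t$ (transitivity of the equivalence relation) is needed.
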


\begin{proof}
	Define the process $Y_s \coloneqq f(X_s) - f'(X_t)X_s - \frac{1}{2}f''(X_t)\lbrack X\rbrack_s$ for $s\geq t$. As $X\in\mathcal{S}^\text{CBP}_t$, we know that $\lbrack X\rbrack \in\mathcal{S}^\text{CBP}_t$ by Lemma \ref{lemma: CBP is equivalent to vbp for local martingales}. Thus, it is sufficient to show that $d^p(Y)_t = 0$. Pick some $s>t$. By the classical It\^o formula, 
	\begin{multline*}
		Y_s = Y_t + \underbrace{\left\{\int_{t}^{s}\left(f'(X_u)-f'(X_t)\right)dA_u + \frac{1}{2}\int_{t}^{s}\left(f''(X_u)-f''(X_t)\right)d\lbrack X\rbrack_u\right\}}_{\text{finite variation part}} \\
		+ \underbrace{\int_{t}^{s}\left(f'(X_u)-f'(X_t)\right)dM_u}_{\text{local martingale part}}. 
	\end{multline*}
	By continuity of $f'$ and $f''$ and using the same steps as in the proof of Theorem \ref{thm: stochastic version of ftc}, we have the result.
\end{proof}

\begin{remark}
	This proof uses the classical It\^o formula as we require the canonical splitting of $f(X)$ to even be able to apply our definition of the differential zero. This is given by the classical It\^o formula. Our result is nevertheless new as it provides a `true' chain rule which applies at a point in time and not over an interval. 
\end{remark}



\begin{corollary}[Leibniz/product rule for differentials]
    Let $M$ and $N$ be continuous local martingales with $M$, $N\in\mathcal{S}^{\text{CBP}}_t$. Then $MN\in\mathcal{S}^{\text{CBP}}_t$ and
    \begin{equation*}
        d^p\left(MN\right)_t = M_td^p(N)_t + N_td^p(M)_t + d^p\left(\lbrack M,N\rbrack \right)_t.
    \end{equation*}
\end{corollary}
\begin{proof}
    By linearity, $M+N\in\mathcal{S}^{\text{CBP}}_t$. By Lemma \ref{lemma: linearity of differentials}, the operation (M) of Definition \ref{def: operations on differentials} and the stochastic chain rule with $f(x)=x^2$, we have
    \begin{align*}
        &d^p(MN)_t \\
        &= \frac{1}{2}\left(d^p\left((M+N)^2\right)_t-d^p(M^2)_t-d^p(N^2)\right) \\
        &= d^p\left((M_t+N_t)(M+N)-M_tM-N_tN\right)_t + \frac{1}{2}d^p\left(\lbrack M+N\rbrack -\lbrack M\rbrack - \lbrack N\rbrack \right)_t \\
        &= M_td^p(N)_t + N_td^p(M)_t + d^p\left(\lbrack M,N\rbrack \right)_t,
    \end{align*}
    where we have used that $\lbrack M,N\rbrack \coloneqq \frac{1}{2}\left(\lbrack M+N\rbrack - \lbrack M\rbrack -\lbrack N\rbrack\right)$ in the last line.
\end{proof}
\begin{corollary} \label{cor: corollary to the ito product rule for differentials}
	For any $d^p(X)_t$ and $d^p(Y)_t\in\mathds{D}_t$, 
	\begin{equation*}
		d^p(X)_t\star d^p(Y)_t = d^p(XY)_t-X_t d^p(Y)_t-Y_t  d^p(X)_t.
	\end{equation*}
\end{corollary}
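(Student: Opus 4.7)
The plan is to rearrange the preceding Itô product rule at a point and then invoke the definition of the $\star$ operation; no new analytic content is needed. First I would pick arbitrary representatives $X$ and $Y$ of the equivalence classes $d^p(X)_t, d^p(Y)_t \in \mathds{D}_t$; these lie in $\mathcal{S}^\text{CBP}_t$ by construction, so their product $XY$ is in $\mathcal{S}^\text{CBP}_t$ by the previous corollary, and $[X,Y]$ is of finite variation with $d^p([X,Y])_t \in \mathds{D}_t$.

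Second, I would apply the Itô product rule at a point, which reads
\begin{equation*}
d^p(XY)_t \;=\; X_t\, d^p(Y)_t + Y_t\, d^p(X)_t + d^p\big(\lbrack X,Y\rbrack\big)_t,
\end{equation*}
as an identity in $\mathds{D}_t$. Using that $\mathds{D}_t$ is a vector space and that the operation (M) makes sense for any $\mathcal{F}_t$-measurable multiplier, I would subtract $X_t\, d^p(Y)_t + Y_t\, d^p(X)_t$ from both sides to isolate the quadratic-covariation differential:
\begin{equation*}
d^p\big(\lbrack X,Y\rbrack\big)_t \;=\; d^p(XY)_t - X_t\, d^p(Y)_t - Y_t\, d^p(X)_t.
\end{equation*}

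Finally, the defining identity (P) of the product gives $d^p(X)_t \star d^p(Y)_t = d^p\big(\lbrack X,Y\rbrack\big)_t$, and substituting this on the left-hand side produces the claimed formula. There is no real obstacle here: the result is a bookkeeping consequence of the product rule together with the definition of $\star$. The only point deserving a sentence of justification is that the rearrangement is legitimate in the quotient space $\mathds{D}_t$, which is immediate from the vector-space structure on $\mathds{D}_t$ and the fact (established by Lemma \ref{lemma: the zero differential is absorbing under operation star} and the preceding lemmas) that all the terms in the identity are well-defined elements of $\mathds{D}_t$ independently of the chosen representatives.
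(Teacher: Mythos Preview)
Your proposal is correct and matches the paper's approach exactly: the paper states this corollary without proof, treating it as an immediate rearrangement of the preceding It\^o product rule at a point together with the definition (P) of $\star$, which is precisely what you have written out.
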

\begin{proof}
    This follows directly from the definition of operation (P) in Definition \ref{def: operations on differentials}.
\end{proof}

We now show that we may use $d^p(X)_t$ instead of $dX_t$ for SDEs with continuous coefficients.
\begin{theorem}\label{theorem: dX_t and dpX_t are equivalent}
	The following are equivalent for any continuous semimartingale in $\mathcal{S}^\text{CBP}_{[0,T]}\big(\mathbb{R}\big)$:
	\begin{enumerate}[label=(\roman*), itemsep=0pt, topsep=0pt]
		\item The process $\{Y_t\}_{t\in[0,T]}$ satisfies $Y_0 = y$ and the It\^o stochastic integral equation
		\begin{equation*} 
			dY_t = f(t,X_t) dX_t,~ \forall t\in[0,T].
		\end{equation*}
		\item The process $\{Y_t\}_{t\in[0,T]}$ satisfies $Y_0 = y$ and the It\^o stochastic differential equation
		\begin{equation*} 
			d^p(Y)_t = f(t,X_t) d^p(X)_t,~\forall t\in[0,T].
		\end{equation*}
	\end{enumerate}
\end{theorem}

\begin{proof}
	We show $(ii) \Rightarrow (i)$ first. By transitivity of $d^p(\cdot)_t$ and the Fundamental Theorem of Stochastic Calculus, get that
	\begin{equation*}
		d^p\big(Y - \int_{0}^{\cdot}f(s,X_s) dX_s\big)_t = 0,~\forall t\in[0,T].
	\end{equation*}
	By Lemma \ref{lemma: uniqueness of solutions}, we deduce that
	\begin{equation*}
		Y_t - \int_{0}^{t}f(s,X_s) dX_s = Y_0 = y,~\forall t\in[0,T].
	\end{equation*}
	Hence $Y$ satisfies $(i)$. The reverse implication is similar. If $Y$ satisfies the above, then it is clear we have
	\begin{equation*}
		d^p\Big(Y - \int_{0}^{\cdot}f(s,X_s) dX_s\Big)_t = 0,~\forall t\in[0,T].
	\end{equation*} 
	By transitivity and the Fundamental Theorem of Stochastic Calculus, we find that
	\begin{equation*}
		d^p(Y)_t = d^p\Big(\int_{0}^{\cdot}f(s,X_s) dX_s\Big)_t = f(t,X_t) d^p(X)_t,~\forall t\in[0,T],
	\end{equation*}
	which completes the proof.
\end{proof}

\begin{corollary}[Existence and uniqueness of solutions of It\^o stochastic differential equations]
	Let $\{W_t\}$ be a Brownian motion on a filtered probability space $(\Omega,\mathcal{A},\{\mathcal{F}_t\},\mathds{P})$. Suppose $\mu(t,X_t),\sigma(t,X_t)$ are measurable functions which satisfy linear growth and Lipschitz conditions and let $x$ be a random variable with finite second moment. Then, the It\^o stochastic differential equation
	\begin{equation*}\label{eq: true sde uniqueness of solutions}
	\left\{
                \begin{array}{rcl}
                  d^p(X)_t &= &\mu(t,X_t) d^p\big(\lbrack W,W\rbrack\big)_t + \sigma(t,X_t) d^p(W)_t,~\forall t\in[0,T],\\
                  X_0 &= &x,
                \end{array}
              \right.
    \end{equation*}
	has as a $\mathds{P}$-almost surely unique sample continuous 
	solution. 
\end{corollary}
\begin{proof}
	This follows from the result above and the usual Theorem on existence and uniqueness of solutions of stochastic integral equations (see \cite{oksendal2013sdes}).
\end{proof}

\subsection{The space of It\^o stochastic differentials II}
We use the properties of It\^o stochastic differential equations proved in the previous section to deduce properties of the space of It\^o differentials. In particular, we show that the dimension of the space of differentials of martingales encodes the Martingale Representation Theorem.

\begin{definition}
	Define the following subspaces of $\mathds{D}_t$ :
	\begin{align*}
		\mathds{D}^A_t &\coloneqq  \Big\{d^p(A)_t\in\mathds{D}_t : \{A_s\}_{s\geq t} \text{ is a continuous finite variation process}\Big\}, \intertext{and}
		\mathds{D}^M_t &\coloneqq  \Big\{d^p(M)_t\in\mathds{D}_t : \{M_s\}_{s\geq t} \text{ is a continuous local martingale}\Big\}. 
	\end{align*}
\end{definition}

\begin{corollary}
	$(\mathds{D}_t,+,\star)$ is a commutative ring and $\mathds{D}^A_t$ a subring of $\mathds{D}_t$. The space $(\mathds{D}_t,+)$ is a commutative $\mathcal{F}_t$-module and $\mathds{D}^M_t$ an $\mathcal{F}_t$-submodule of $\mathds{D}_t$.
	Furthermore, the products of the rings satisfy
	\begin{equation*}
		\mathds{D}_t\star \mathds{D}_t \subset \mathds{D}^A_t,~\mathds{D}^A_t\star \mathds{D}_t = 0 \text{, and }\mathds{D}_t\star\mathds{D}_t\star\mathds{D}_t = 0.
	\end{equation*}
\end{corollary}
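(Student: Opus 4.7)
The plan is to view this corollary as an organizational consolidation of the algebraic structure implied by previous results, rather than a computation requiring new technical ideas. The key observations driving everything are: (i) the quadratic covariation $[X,Y]$ is a continuous finite variation process, (ii) $[\cdot,\cdot]$ is bilinear and symmetric, (iii) the bracket of a finite variation process with any semimartingale is zero, and (iv) Lemma \ref{lemma: the zero differential is absorbing under operation star} ensures that pairing the zero differential with anything under $\star$ yields zero.

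First I would verify the ring and module axioms. The additive commutative group structure of $(\mathds{D}_t,+)$ is already recorded via the vector space structure of $\mathcal{S}^\text{CBP}_t$. Well-definedness of $\star$ on equivalence classes follows from the bilinear decomposition $[X,Y] - [\tilde X,\tilde Y] = [X-\tilde X,\tilde Y] + [X, Y-\tilde Y]$ combined with the absorbing lemma. Closure of $\star$ requires $[X,Y]\in\mathcal{S}^\text{CBP}_t$, which I would derive from Kunita--Watanabe: $TV([X,Y])_{t,t+h}\leq \sqrt{[X]_{t,t+h}}\sqrt{[Y]_{t,t+h}} = \mathcal{O}_p(h^{1/2})\mathcal{O}_p(h^{1/2})=\mathcal{O}_p(h)$, so $[X,Y]\in\mathcal{S}^\text{VBP}_t = \mathcal{S}^\text{CBP}_t$ by Corollary \ref{lemma: VBP is equivalent to CBP}. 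Commutativity and distributivity of $\star$ inherit directly from the symmetry and bilinearity of $[\cdot,\cdot]$. Associativity is the pleasantly degenerate case: since $[X,Y]$ is of finite variation, $[[X,Y],Z]=0$ and symmetrically $[X,[Y,Z]]=0$, so both sides of the associativity identity equal the zero differential. The $\mathcal{F}_t$-module axioms are checked analogously; well-definedness of scalar multiplication reduces to showing $d^p(HZ)_t = 0$ whenever $d^p(Z)_t=0$, which is immediate because $H$ is $\mathcal{O}_p(1_h)$ and the operator $Z\mapsto HZ$ scales total variation and quadratic variation on $[t,t+h]$ by $|H|$ and $H^2$ respectively. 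For the submodule $\mathds{D}^M_t$, the only point is that $\{HM_s\}_{s\geq t}$ is a local martingale whenever $H\in\mathcal{F}_t$, which is immediate since $H$ acts as a constant on $[t,\infty)$.

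For the subring $\mathds{D}^A_t$, closure under addition is clear and closure under $\star$ is trivial because $[A,B]=0$ for finite variation processes. The direct product relations then fall out as immediate consequences of the main observations: $\mathds{D}_t \star \mathds{D}_t\subset \mathds{D}^A_t$ because $[X,Y]$ is finite variation; $\mathds{D}^A_t \star \mathds{D}_t = 0$ because the bracket annihilates finite variation processes; and the triple identity $\mathds{D}_t\star\mathds{D}_t\star\mathds{D}_t=0$ is then obtained by composing the two, since $(\eta_1\star\eta_2)\star\eta_3\in\mathds{D}^A_t\star\mathds{D}_t = 0$. The only real obstacle is verifying $[X,Y]\in\mathcal{S}^\text{CBP}_t$ via Kunita--Watanabe, together with the bookkeeping to ensure each axiom is correctly interpreted at the level of equivalence classes rather than representatives; once these are in place, every claim reduces to a short manipulation using Lemma \ref{lemma: the zero differential is absorbing under operation star} and the standard algebraic properties of quadratic covariation.
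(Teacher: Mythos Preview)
Your proposal is correct and follows essentially the same approach as the paper: both treat the corollary as an organizational summary, with the ring and module axioms reduced to standard properties of quadratic covariation and the direct product relations following from the fact that $[X,Y]$ is of finite variation and that the bracket annihilates finite variation processes. The only minor difference is that the paper explicitly invokes Corollary~\ref{cor: corollary to the ito product rule for differentials} (the It\^o product rule $d^p(X)_t\star d^p(Y)_t = d^p(XY)_t - X_t\, d^p(Y)_t - Y_t\, d^p(X)_t$) as the source of well-definedness and closure for $\star$, whereas you derive these directly from the bilinearity of $[\cdot,\cdot]$, Lemma~\ref{lemma: the zero differential is absorbing under operation star}, and a Kunita--Watanabe estimate; both routes are valid and amount to the same content.
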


\begin{proof}
	The first part follows from Corollary \ref{cor: corollary to the ito product rule for differentials} and the axioms of rings and modules. The second part follows from that facts that the quadratic co-variation of continuous semimartingales is a continuous finite variation process and that the quadratic co-variation of finite variation processes vanishes.
\end{proof}

\begin{remark}
	Under certain circumstances, it is convenient to consider $\mathds{D}_t$ as a vector space although we must normally say it is an $\mathcal{F}_t$-module. Given the information up to time $t$, any $\mathcal{F}_t$-measurable random variable is just a real number. Therefore, $\mathds{D}_t\rvert_{\mathcal{F}_t}$ can then be interpreted as an ordinary real vector space.
\end{remark}

\begin{lemma}
	Let $\mathds{D}^\text{It\^o}_t$ denote the space of differentials at time $t$ of It\^o processes of the form of Example \ref{ex: ito processes have CBP} which are generated by a $d$-dimensional Brownian motion $W=(W^1,\dots ,W^d)$. Then the $\mathcal{F}_t$-module $\mathds{D}^\text{It\^o}_t$ is finitely generated and has dimension $d+1$.
\end{lemma}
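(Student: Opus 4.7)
The plan is to exhibit the explicit set $\{d^p(t)_t, d^p(W^1)_t, \ldots, d^p(W^d)_t\}$ as a basis for $\mathds{D}^\text{It\^o}_t$ as an $\mathcal{F}_t$-module. This requires two steps: showing it spans (which delivers finite generation) and showing it is $\mathcal{F}_t$-linearly independent.

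For spanning, I would take any $\xi \in \mathds{D}^\text{It\^o}_t$, represented by an It\^o process $X = X_0 + \int_0^\cdot \mu_u\,du + \sum_{i=1}^d \int_0^\cdot \sigma^i_u\,dW^i_u$ whose coefficients satisfy the continuity hypotheses of Lemma \ref{lemma: ito processes have CBP}. Applying the stochastic Fundamental Theorem of Calculus (Theorem \ref{thm: stochastic version of ftc}) to each integral, together with the additivity of $d^p$, gives
\begin{equation*}
d^p(X)_t = \mu_t\,d^p(t)_t + \sum_{i=1}^d \sigma^i_t\,d^p(W^i)_t,
\end{equation*}
where the coefficients $\mu_t, \sigma^i_t$ are $\mathcal{F}_t$-measurable. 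This exhibits $\xi$ as an $\mathcal{F}_t$-linear combination of the proposed $d+1$ generators.

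For linear independence, I would suppose $a_0, a_1, \ldots, a_d$ are $\mathcal{F}_t$-measurable random variables with $a_0\,d^p(t)_t + \sum_{i=1}^d a_i\,d^p(W^i)_t = 0$. Unfolding the module action and the additivity of $d^p$, this says that the process $Y_s := a_0(s-t) + \sum_i a_i(W^i_s - W^i_t)$, viewed on $[t, \infty)$, satisfies $d^p(Y)_t = 0$. Its canonical decomposition on that interval has finite variation part $A_s = a_0(s-t)$ and local martingale part $M_s = \sum_i a_i(W^i_s - W^i_t)$. By the alternative characterisation of the zero differential we then need $TV(A)_{t,t+h} = |a_0|h = o_p(h)$ and, using $[W^i,W^j]_s = \delta_{ij}s$ together with the bilinearity of the bracket, $[M]_{t,t+h} = \bigl(\sum_i a_i^2\bigr)h = o_p(h)$. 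These conditions force $a_0 = 0$ and $\sum_i a_i^2 = 0$ almost surely, hence $a_i = 0$ a.s.\ for every $i$.

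The main obstacle is the careful bookkeeping in the independence step: interpreting $a_i\,d^p(W^i)_t$ when $a_i$ is an $\mathcal{F}_t$-measurable (hence random) scalar, and confirming that the representative $Y$ has the canonical decomposition claimed on $[t, \infty)$, given that its values for $s < t$ are immaterial to the differential at $t$. Once this is set up, the remainder is a direct quadratic-variation computation that leverages the orthogonality of the independent Brownian components $W^1, \ldots, W^d$.
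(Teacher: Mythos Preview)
Your argument is correct. The spanning step matches the paper's, which simply asserts that $\{dt,d^p(W^1)_t,\dots,d^p(W^d)_t\}$ generates $\mathds{D}^\text{It\^o}_t$ ``by the definition of It\^o processes''; you make this explicit via the stochastic Fundamental Theorem of Calculus, which is exactly the content behind that assertion.

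For linear independence, however, you take a genuinely different route from the paper. The paper argues algebraically: given a relation $H^0\,dt+\sum_i H^i\,d^p(W^i)_t=0$, it applies the $\star$-product with $d^p(W^j)_t$ and uses $d^p(W^i)_t\star d^p(W^j)_t=\delta_{ij}\,dt$ together with $dt\star d^p(W^j)_t=0$ to isolate $H^j\,dt=0$, whence $H^j=0$ and then $H^0=0$. You instead pass to a concrete representative $Y_s=a_0(s-t)+\sum_i a_i(W^i_s-W^i_t)$, read off its canonical decomposition on $[t,\infty)$, and compute $TV(A)_{t,t+h}=|a_0|h$ and $[M]_{t,t+h}=(\sum_i a_i^2)h$ directly, forcing the coefficients to vanish via the $o_p(h)$ conditions. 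Your approach is more elementary in that it never invokes the ring structure on $\mathds{D}_t$ or the absorbing property of the $\star$-product (Lemma~\ref{lemma: the zero differential is absorbing under operation star}); it relies only on the definition of the zero differential and the bracket of independent Brownian motions. The paper's approach, by contrast, is cleaner once that ring structure is in place and highlights that the independence is really encoded in the multiplication table of the generators. Both are valid; yours is self-contained, the paper's is structurally more informative.
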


\begin{proof}
	By the definition of It\^o processes, $\big\{dt,d^p(W^1)_t,\dots,d^p(W^d)_t\big\}$ is a generating set for $\mathds{D}^\text{It\^o}_t$. Note that $dt$ can be understood as $d^p\big([W^i,W^i]\big)_t$ for any of the $i\in\{1,\dots,d\}$. To show linear independence, suppose that there exists $\mathcal{F}_t$-measurable random variables $H^0,H^1,\dots,H^d$ such that 
	$H^0 dt+H^1 d^p(W^1)_t +\dots + H^d d^p(W^d)_t = 0$. Multiplying by $d^p(W^i)_t$ for some $i= 1,\dots, d$, we have $H^i  dt=0$ since $d^p(W^i)_t \star d^p(W^j)_t = \delta_{i,j} dt$ and $d^p(W^i)_t \star dt = 0$. Hence $H^i = 0$, $\forall i=1,\dots, d$. It follows also that $H^0=0$.
\end{proof}

\begin{lemma}
	Let $W=(W^1,\dots ,W^d)$ be a $d$-dimensional Brownian motion and let $\{\mathcal{G}_t\} = \{\mathcal{F}^W_t\}$ be its natural filtration. Let $\mathds{D}^{M;W}_{t}$ denote the space of differentials of continuous $\mathcal{G}_t$-local martingales at time $t$. Then the $\mathcal{G}_t$-module $\mathds{D}^{M;W}_t$ is finitely generated and has dimension $d$.
\end{lemma}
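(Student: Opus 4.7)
The plan is to imitate the proof of the previous lemma and show that $\{d^p(W^1)_t,\ldots,d^p(W^d)_t\}$ forms a $\mathcal{G}_t$-module basis of $\mathds{D}^{M;W}_t$. Linear independence follows from the $\star$-product argument applied to the Brownian co-variation structure, while generation rests on the Martingale Representation Theorem (MRT) for the Brownian filtration combined with the equivalence between $d^p(M)_t=0$ and $[M]_{t,t+h}=o_p(h)$ supplied by Lemma \ref{lemma: M has differential zero iff its QV has differential zero}.

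For linear independence, suppose that $\mathcal{G}_t$-measurable random variables $H^1,\ldots,H^d$ satisfy $\sum_{i=1}^d H^i\,d^p(W^i)_t=0$. I would take the $\star$-product with $d^p(W^j)_t$: since $[W^i,W^j]_s=\delta_{ij}s$, we have $d^p(W^i)_t\star d^p(W^j)_t=\delta_{ij}\,dt$, and Lemma \ref{lemma: the zero differential is absorbing under operation star} sends the right-hand side to $0$, so the relation collapses to $H^j\,dt=0$ in $\mathds{D}_t$. Interpreting this as $d^p(H^j\cdot s)_t=0$, whose total variation on $[t,t+h]$ equals $|H^j|h$, one concludes $|H^j|h=o_p(h)$, which since $H^j$ does not depend on $h$ forces $H^j=0$ almost surely.

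For generation, let $M$ be a continuous $\mathcal{G}_t$-local martingale with $M\in\mathcal{S}^\text{CBP}_t$; without loss of generality $M_0=0$. By MRT there exist predictable integrands $H^1,\ldots,H^d$ with $M_s=\sum_i\int_0^s H^i_u\,dW^i_u$, and by Corollary \ref{lemma: VBP is equivalent to CBP} the CBP hypothesis is equivalent to $[M]_{t,t+h}=\int_t^{t+h}\sum_i(H^i_u)^2\,du=\mathcal{O}_p(h)$. The aim is then to produce $\mathcal{G}_t$-measurable coefficients $c^1,\ldots,c^d$ so that the residual local martingale $R_s:=M_s-\sum_i c^i(W^i_s-W^i_t)$ satisfies $[R]_{t,t+h}=\int_t^{t+h}\sum_i(H^i_u-c^i)^2\,du=o_p(h)$. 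Once this is achieved, Lemma \ref{lemma: M has differential zero iff its QV has differential zero} yields $d^p(R)_t=0$, whence $d^p(M)_t=\sum_i c^i\,d^p(W^i)_t$ in $\mathds{D}^{M;W}_t$.

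The main obstacle is producing the $c^i$, since the MRT integrands are only predictable and a priori have no pointwise trace at the fixed deterministic $t$. My plan is to appeal to an $L^2$-Lebesgue differentiation argument: the CBP hypothesis already guarantees that $\frac{1}{h}\int_t^{t+h}|H_u|^2\,du$ is bounded in probability, and one would like to choose $c^i$ as an $L^2$-Lebesgue value of $H^i$ at $t$, so that $\frac{1}{h}\int_t^{t+h}(H^i_u-c^i)^2\,du\to 0$ in probability. The delicate point is justifying the existence of such a value at our specific $t$; this may be handled by restricting to martingales for which the MRT integrand enjoys the continuity hypotheses of Lemma \ref{lemma: ito processes have CBP} (in the spirit of the previous lemma's restriction to well-behaved It\^o processes), or more intrinsically by invoking the Clark--Ocone formula to express $H^i_t$ as a conditional expectation of a Malliavin derivative evaluated at $t$, which yields the desired $\mathcal{G}_t$-measurable coefficients automatically.
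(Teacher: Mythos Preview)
Your approach mirrors the paper's exactly: linear independence via the $\star$-product against each $d^p(W^j)_t$ (this is precisely the ``same argument as the previous Lemma'' the paper invokes), and generation via the Martingale Representation Theorem. The paper's own proof is a two-line sketch that simply asserts ``By the Martingale Representation Theorem, $\{d^p(W^1)_t,\dots,d^p(W^d)_t\}$ is a generating set'' without further justification.

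What is notable is that you have put your finger on a genuine subtlety the paper glosses over. The Stochastic FTC (Theorem~\ref{thm: stochastic version of ftc}) requires a \emph{sample continuous} integrand, whereas MRT delivers only a predictable one; so there is no automatic route from $M=\sum_i\int_0^\cdot H^i_u\,dW^i_u$ to $d^p(M)_t=\sum_i H^i_t\,d^p(W^i)_t$ at a prescribed deterministic $t$, and the existence of $\mathcal{G}_t$-measurable coefficients $c^i$ with $\int_t^{t+h}\sum_i(H^i_u-c^i)^2\,du=o_p(h)$ is not guaranteed by CBP alone. Your two suggested repairs---restricting to martingales whose MRT integrands satisfy the continuity hypotheses of Lemma~\ref{lemma: ito processes have CBP}, or invoking Clark--Ocone---are both sensible, but the first narrows the lemma's scope (effectively collapsing $\mathds{D}^{M;W}_t$ back to the martingale part of $\mathds{D}^{\text{It\^o}}_t$ already treated) and the second imposes Malliavin regularity not assumed in the statement. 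In short, your proof is at least as complete as the paper's, and you have correctly diagnosed where the real work would lie; a fully rigorous version of the lemma as stated would need either an additional hypothesis on the integrands or a sharper Lebesgue-point argument for predictable processes than is supplied here.
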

\begin{proof}
	By the Martingale Representation Theorem, $\big\{d^p(W^1)_t,\dots,d^p(W^d)_t\big\}$ is a generating set for $\mathds{D}^{M;W}_t$. To show linear independence, we use the same argument as the previous Lemma.
\end{proof}

\begin{corollary}
	The $\mathcal{G}_t$-module of differentials of continuous $\mathcal{G}_t$-semimartingales with absolutely continuous finite variation part has dimension $d+1$. 
\end{corollary}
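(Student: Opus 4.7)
The plan is to combine the preceding two lemmas; most of the work has already been done. Given a continuous $\mathcal{G}_t$-semimartingale $X$ with absolutely continuous finite variation part, I would write
$$X_s = X_0 + \int_0^s \mu_u \, du + M_s,$$
where $M$ is a continuous $\mathcal{G}_t$-local martingale. Applying linearity of $d^p(\cdot)_t$ together with the Stochastic Fundamental Theorem of Calculus (Theorem \ref{thm: stochastic version of ftc}) to the drift integral yields
$$d^p(X)_t = \mu_t \cdot dt + d^p(M)_t.$$
The immediately preceding lemma then produces $\mathcal{G}_t$-measurable random variables $H^1_t, \dots, H^d_t$, coming from the Martingale Representation Theorem, such that $d^p(M)_t = \sum_{i=1}^d H^i_t \, d^p(W^i)_t$. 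Assembling the two identities shows that $\{dt, d^p(W^1)_t, \dots, d^p(W^d)_t\}$ is a generating set of size $d+1$.

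For linear independence I would replay the argument used in the two previous lemmas verbatim. Suppose $H^0 \, dt + \sum_{i=1}^d H^i \, d^p(W^i)_t = 0$. Starring each side with $d^p(W^j)_t$ and invoking the identities $d^p(W^i)_t \star d^p(W^j)_t = \delta_{ij} \, dt$ (since $[W^i,W^j]_s = \delta_{ij} s$) together with $dt \star d^p(W^j)_t = 0$ (the bracket of a finite variation process with any semimartingale vanishes, by Lemma \ref{lemma: the zero differential is absorbing under operation star}) forces $H^j \, dt = 0$ and hence $H^j = 0$ for each $j$. Substituting back gives $H^0 \, dt = 0$, so $H^0 = 0$. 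This establishes that the $d+1$ generators are $\mathcal{G}_t$-linearly independent.

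The only genuinely delicate point is ensuring that the FTC is actually applicable to the drift term, which requires enough sample continuity of $\mu$ near $s = t$. This is however the same hypothesis already embedded in Lemma \ref{lemma: ito processes have CBP} that makes the process belong to $\mathcal{S}^\text{CBP}_t$ in the first place, so no new work is required. In summary, the heavy machinery (existence of the CBP decomposition, martingale representation, the FTC, and the multiplication table for $\star$ on the Brownian generators) has all been proved previously, and this corollary is essentially an assembly of those pieces.
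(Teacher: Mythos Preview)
Your proposal is correct and follows essentially the same approach as the paper: the paper's proof is a one-line sketch invoking the Martingale Representation Theorem for the local martingale part and the absolute continuity assumption for the finite variation part to obtain the generating set $\{dt,d^p(W^1)_t,\dots,d^p(W^d)_t\}$, then defers linear independence to the star-multiplication argument of the preceding lemmas. You have simply spelled out those details, including the use of the Fundamental Theorem of Calculus on the drift integral, and your caveat about the continuity of $\mu$ is an honest observation that the paper likewise leaves implicit.
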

\begin{proof}
	By the Martingale Representation Theorem and the assumption on the finite variation part, $\big\{dt,d^p(W^1)_t,\dots,d^p(W^d)_t\big\}$ is a generating set and linear independence is proved as above.
\end{proof}

At this point we can not say much more about the space of differentials of semimartingales whose finite variation part is not absolutely continuous. To see this consider the case of the deterministic and singular Cantor function $c:[0,1]\rightarrow [0,1]$. It satisfies the condition $c^*_{t,t+h}=o(h)$ at almost every $t\in[0,1]$ (because it has derivative zero almost everywhere and is increasing). But the uniqueness of solutions of Lemma \ref{lemma: uniqueness of solutions} and subsequently of Theorem \ref{theorem: dX_t and dpX_t are equivalent} would be violated. Therefore we only consider the case of absolutely continuous finite variation part in this paper.

We have shown that the space of differentials we consider are finite dimensional and spanned by the generating Brownian motions. If we consider strange enough integrands, we can show that for more general stochastic processes, the space of differentials should be infinitely dimensional. We illustrate this in the following example. Nevertheless, we argue that the processes constructed in this manner cannot be meaningfully hedged. As the results of this section are used in our financial applications of Section \ref{section: financial applications}, we do not consider such general processes.

\begin{example}[`Infinitely fast buying and selling at $0$']
	Suppose we work in one dimension and $W$ is our generating Brownian motion. Consider the (deterministic) hedging strategy $\phi(s) = (-1)^n\mathds{1}_{s\in[\nicefrac{1}{n+1},\nicefrac{1}{n})}$. Then $\phi$ is c\`adl\`ag so the stochastic integral $\int \phi(s)dW_s$ exists and is another Brownian motion, call it $\tilde{W}$. In this case, the Fundamental Theorem of Calculus at $t=0$ cannot hold; we do not have $d^p(W)_0 \propto d^p(\tilde{W})_0$. Hence $d^p(W)_0$ cannot span the space of differentials of general stochastic integrals, only a subspace of those. For our financial applications we need not consider all possible stochastic integrals as some are representations of unobtainable hedging strategies (it is not realistic to buy and sell infinitely quickly near time $0$ in this case); instead we think of processes whose stochastic differential exists as those which we can realistically hedge locally.
\end{example} 

We now define a module inner product on a subspace of $\mathds{D}_t^\text{It\^o}$ using an expectation. In order for the expectation to be well-defined, we need the differentials to have square integrable It\^o processes as representatives. We use this definition later to illustrate some financial applications of stochastic differentials in Section \ref{section: financial applications}. This is not technically an inner product as it takes values in the space of $\mathcal{F}_t$-measurable random variables and not in $\mathbb{R}$. Nevertheless, one can immediately consider it as an inner product if we condition on the information provided by $\mathcal{F}_t$. Alternatively we can see it is an inner product in the pathwise sense too.

\begin{definition} \label{def: conditional expectation of an ito differential}
	Let $\mathds{D}^\text{It\^o}_t(L^2)$ be the space of It\^o differentials of twice integrable processes. Let $\eta \in\mathds{D}^\text{It\^o}_t(L^2)$. Then there exists a twice integrable real-valued It\^o process $\{X_t\}$ such that $d^p(X)_t = \eta$. Define the conditional expected value of the differential as
	\begin{equation*}
		\mathbb{E}_t[\eta] \coloneqq \lim\limits_{h\rightarrow 0^+}\frac{\mathbb{E}\left[X_{t+h}-X_t|\mathcal{F}_t\right]}{h},
	\end{equation*}
	where the convergence is in the $L^1$-sense.
\end{definition}

\begin{definition}
	We call
	\begin{equation*}
		\ker(\mathbb{E}_t) \coloneqq \left\{\eta \in \mathds{D}^\text{It\^o}_t(L^2):\mathbb{E}_t[\eta]=0\right\}
	\end{equation*}
	the \textit{space of martingale differentials} on $\mathds{D}^\text{It\^o}_t(L^2)$.
\end{definition}
Then there exists a bilinear form $q^\perp:\mathds{D}^\text{It\^o}_t(L^2)\times \mathds{D}^\text{It\^o}_t(L^2) \rightarrow \mathcal{F}_t$ such that $q^\perp(dt,dt)=1$ and $q^\perp(\eta,\tilde{\eta})=0$ for any $\eta \in \ker(\mathbb{E}_t)$. In fact, it is not too hard to see that $q^\perp(\eta,\tilde{\eta})\coloneqq \mathbb{E}_t[\eta]\mathbb{E}_t[\tilde{\eta}]$ will do. We can now define a pathwise inner product on the space of It\^o differentials.

\begin{lemma} \label{lemma: inner product on space of ito differentials}
	Define the map $\langle \cdot,\cdot\rangle : \mathds{D}^\text{It\^o}_t(L^2)\times \mathds{D}^\text{It\^o}_t(L^2) \rightarrow \mathcal{F}_t(\mathbb{R})$ as
	\begin{equation*}
		\langle \eta,\tilde{\eta}\rangle \coloneqq \mathbb{E}_t[\eta]\mathbb{E}_t[\tilde{\eta}] + \underbrace{\mathbb{E}_t[\eta\star \tilde{\eta}]}_{\eqqcolon q(\eta,\tilde{\eta})}=q^\perp(\eta,\tilde{\eta}) + q(\eta,\tilde{\eta}).
	\end{equation*}
	Then $\langle \cdot,\cdot\rangle$ defines a pathwise inner product on $\mathds{D}^\text{It\^o}_t(L^2)$ in the sense that it is a symmetric bilinear form and it is almost-surely positive definite.
\end{lemma}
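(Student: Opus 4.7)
The plan is to verify symmetry, $\mathcal{F}_t$-bilinearity, and almost-sure positive definiteness in turn, taking for granted that the map is well-defined because $\star$ and $\mathbb{E}_t$ respect the equivalence relation on $\mathds{D}^\text{It\^o}_t(L^2)$ and return $\mathcal{F}_t(\mathbb{R})$-valued quantities. Symmetry is immediate from the symmetry of quadratic covariation, $\eta \star \tilde{\eta} = d^p([X,Y])_t = d^p([Y,X])_t = \tilde{\eta}\star \eta$, together with commutativity of the product of $\mathcal{F}_t$-measurable random variables.

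For bilinearity I would fix the second argument and verify $\mathcal{F}_t$-linearity in the first. Linearity of $\mathbb{E}_t$ follows from the linearity of the conditional expectation applied to the defining $L^1$ limit, combined with operation (M) for pulling $\mathcal{F}_t$-measurable scalars out. Bilinearity of $\star$ over the $\mathcal{F}_t$-module structure reduces to the known bilinearity of the quadratic covariation bracket, i.e.\ $[HX+Y,Z] = H[X,Z] + [Y,Z]$. Assembling these gives $\langle H\eta + \xi,\tilde{\eta}\rangle = H\langle \eta,\tilde{\eta}\rangle + \langle \xi, \tilde{\eta}\rangle$ for all $H\in \mathcal{F}_t(\mathbb{R})$, and the symmetric statement in the second slot follows from symmetry.

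The main obstacle is positive definiteness. For nonnegativity, decompose $\langle \eta,\eta\rangle = \mathbb{E}_t[\eta]^2 + \mathbb{E}_t[\eta\star \eta]$; the first term is a square, and the second is the $\mathbb{E}_t$-value of a differential of a non-decreasing process (the quadratic variation $[X,X]$), hence almost surely nonnegative. Choosing an It\^o representative $X = X_0 + \int_0^\cdot \mu_s\,ds + \int_0^\cdot \sigma_s\cdot dW_s$ yields, after invoking the continuity hypotheses from Lemma \ref{lemma: ito processes have CBP} and Definition \ref{def: conditional expectation of an ito differential}, $\mathbb{E}_t[\eta] = \mu_t$ and $\mathbb{E}_t[\eta\star \eta] = \lVert \sigma_t\rVert^2$.

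For non-degeneracy, suppose $\langle \eta,\eta\rangle = 0$ almost surely. Then both nonnegative summands vanish a.s., giving $\mu_t = 0$ and $\sigma_t = 0$ almost surely. By continuity of the coefficients at $t$, on the full-measure event where these vanish we obtain $\int_t^{t+h}|\mu_s|\,ds = o(h)$ and $\int_t^{t+h}\lVert \sigma_s\rVert^2\,ds = o(h)$ pathwise; these are exactly the total variation of the finite-variation part and the quadratic variation of the martingale part of $X$ over $[t,t+h]$. Applying the alternative characterisation of the zero differential via Lemmas \ref{lemma: A has differential 0 iff total variation is little op h} and \ref{lemma: M has differential zero iff its QV has differential zero} then yields $d^p(X)_t = 0$, i.e.\ $\eta = 0$ in $\mathds{D}^\text{It\^o}_t(L^2)$. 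The delicate point is identifying the $\omega$-null set on which this implication fails and verifying that it is genuinely negligible; this should follow from the continuity and $L^2$-integrability hypotheses built into the definition of $\mathds{D}^\text{It\^o}_t(L^2)$.
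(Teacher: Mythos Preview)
The paper states this lemma without proof, so there is nothing to compare your approach against directly. Your argument is essentially correct and is the natural one: symmetry and $\mathcal{F}_t$-bilinearity follow from the corresponding properties of the bracket and of $\mathbb{E}_t$, while nonnegativity comes from writing $\langle\eta,\eta\rangle=\mathbb{E}_t[\eta]^2+\mathbb{E}_t[\eta\star\eta]$ and noting that both summands are almost surely nonnegative.

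One small point worth tightening in the non-degeneracy step: you invoke pathwise continuity of $\mu$ and $\sigma$ at $t$ to pass from $\mu_t=0$, $\sigma_t=0$ a.s.\ to $\int_t^{t+h}|\mu_s|\,ds=o(h)$ and $\int_t^{t+h}\lVert\sigma_s\rVert^2\,ds=o(h)$. The paper's framing (Lemma~\ref{lemma: ito processes have CBP}) also allows $L^1$- or $L^2$-continuity of the coefficients in place of pathwise continuity, and in that case the pathwise $o(h)$ argument is not available. The fix is routine: under $L^1$-continuity of $\mu$ with $\mu_t=0$ a.s., one has $\mathbb{E}\big[\int_t^{t+h}|\mu_s|\,ds\big]=o(h)$ and hence $TV(A)_{t,t+h}=o_p(h)$ by Markov, and similarly for $\sigma$. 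With that adjustment your conclusion $d^p(X)_t=0$ via Lemmas~\ref{lemma: A has differential 0 iff total variation is little op h} and~\ref{lemma: M has differential zero iff its QV has differential zero} goes through.
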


\section{Financial applications}
\label{section: financial applications}
In \cite{armstrong2018classifying}, the authors develop the notion of an isomorphism of markets. Thus it is natural to ask which financial concepts may be defined in an invariant manner, from the point of view of isomorphisms. In this section we use stochastic differentials to show that the notion of a portfolio of assets at a moment in time $t$ can be given an invariant meaning. We call these `instantaneous portfolios'. 

We define the instantaneous risk and return of such portfolios and see how Markowitz's classical theory may be understood as the theory of instantaneous portfolios. We see how the basic invariant of a market identified in \cite{armstrong2018classifying}, the absolute market price of risk, arises naturally by considering the Markowitz optimisation problem of instantaneous portfolios. Let us now recall the definitions of \cite{armstrong2018classifying}.

\begin{definition}[Multi-period market \cite{armstrong2018classifying}]
A multi-period market consists of the following:
\begin{enumerate}[label=(\roman*), itemsep=0pt, topsep=0pt]
	\item A filtered probability space $(\Omega,\mathcal{A},\mathcal{F}_t,\mathbb{P})$ where $t\in\mathcal{T}\subseteq [0,T]$ for some index set $\mathcal{T}$ containing both $0$ and $T$. We write $\mathcal{F}=\mathcal{F}_T$. We require that $\mathcal{F}_0=\{\emptyset,\Omega\}$.
	\item For each $X\in L^0(\Omega;\mathbb{R})$, an $\mathcal{F}_t$-adapted process $c_t(X)$ defined for each $t\in \mathcal{T}\setminus T$. 
\end{enumerate}
\end{definition}
Random variables $X\in L^0(\Omega,\mathcal{F}_t;\mathbb{R})$ are interpreted as contracts which have payoff $X$ at time $T$. The cost of such contracts at time $t$ are denoted $c_t(X)$.

\begin{definition}[Continuous time complete market \cite{armstrong2018classifying}]
	A continuous time market $(\Omega, \mathcal{A}, \mathcal{F}_t, \mathbb{P}, c_t)$ on $[0, T]$ is called a
	\emph{continuous time complete market} with risk free rate $r$ if there exists a measure
	$\mathbb{Q}$ equivalent to $\mathbb{P}$ with
	\begin{equation*}
	c_t(X) = e^{-r(T-t)} \mathbb{E}^\mathbb{Q}(X | \mathcal{F}_t)
	\end{equation*} 
	for $\mathbb{Q}$-integrable random variables $X$ and equal to $+\infty$ otherwise.
\end{definition} 

\begin{example}[Diffusion market \cite{armstrong2018classifying}] \label{ex: diffusion market}
	Let $(\Omega, \mathcal{A}, \mathcal{F}_t, \mathbb{P})$ be $d$-dimensional Wiener
	space, that is the probability space generated by the $d$-dimensional Brownian
	motion $\bm W_t$. A diffusion market is one in which all asset can be described by an $d$-dimensional stochastic process $\bm X_t$ defined by a
	stochastic differential equation of the form
	\begin{equation} \label{eq: diffusion market dynamics}
	d\bm X_t = \bm\mu(t,\bm X_t)dt + \bm\sigma(t,\bm X_t)d\bm W_t.
	\end{equation}
	Here $\bm\mu $ is a $\mathbb{R}^d$-valued function and $\bm\sigma$ is an invertible-matrix valued
	function. We assume the coefficients $\bm\mu$ and $\bm\sigma$ are sufficiently well-behaved for
	the solution of the equation to be well-defined on $[0, T ]$. The components, $X^i_t$,
	of the vector $\bm X_t$ are intended to model the prices of $d$ assets. 
	
	The diffusion market for (\ref{eq: diffusion market dynamics}) with risk-free rate $r$ over a time period $[0, T ]$ is
	given by defining $c_t : L_0(\Omega; \mathbb{R}) \rightarrow \mathbb{R}$ for $t \in [0, T)$ by
	\begin{equation*}
	 c_t(X) = 
	\begin{cases}
			\alpha_0e^{-r(T-t)} + \sum_{i=1}^{d}\alpha_iX^i_t, & \text{ if } X = \alpha_0 + \sum_{i=1}^{d}\alpha_i X_T^i,\\
            \infty, & \text{otherwise.}
		 \end{cases}
	\end{equation*}
\end{example}
This is well-defined so long as we assume that $X^i_T$ are linearly independent
random variables. This is the case in all situations of interest. The market defined above is called an exchange market because it models the basic assets that can be purchased directly on an exchange. 

Example \ref{ex: diffusion market} illustrates the classical coordinate approach to understanding markets. However, the decomposition of the market into basis assets corresponding to basis vectors of $\mathbb{R}^n$ is not invariant under market morphisms. 

Market morphisms are defined in \cite{armstrong2018classifying}. They can be understood financially as an embedding of one market into another. Isomorphic markets are markets which contain identical investment opportunities at identical costs. For completeness, we give a formal definition: a morphism of markets $M_1 = \left((\Omega_1,\mathcal{A}_1,(\mathcal{F}_{1,t})_{t\in\mathcal{T}},\mathbb{P}_1),c_1\right)$ and $M_2 = \left((\Omega_2,\mathcal{A}_2,(\mathcal{F}_{2,t})_{t\in\mathcal{T}},\mathbb{P}_2),c_2\right)$ is a function $\phi:\Omega_1\rightarrow\Omega_2$ satisfying $c_2(X)\geq c_1(X\circ \phi)$ for all $X\in L^0(\Omega_2;\mathbb{R})$ and which is also $\mathcal{F}_{1,t}$-$\mathcal{F}_{2,t}$-measurable for each $t\in\mathcal{T}$. The notion of invariance is now well-defined. See \cite{armstrong2018classifying} for an introduction to the notions of morphisms and invariance as applied to finance.

The fact that the basis assets are not invariant can be interpreted as saying that the choice of basis assets is not financially significant and is merely a convenience for calculations.

To understand a market in an invariant fashion we first note that the Radon-Nikodym density $Q_t$ is invariantly defined by the probability measures $\mathbb{P}$ and $\mathbb{Q}$. However from the point of view of calculations, $Q_t$ is difficult to use directly as it is non-local in time (it is not independent from $Q_0$ for instance). To obtain a basic local invariant we first consider the stochastic logarithm of $Q_t$, given by 
\begin{equation*}
\mathcal{L}(Q_t) \coloneqq \log Q_t + \int_{0}^{t}\frac{1}{2Q_s^2}d\lbrack Q,Q\rbrack_s, 
\end{equation*}
so that $Q_t = \mathcal{E}\left(\mathcal{L}\left(Q_t\right)\right)$ where $\mathcal{E}(\cdot)$ denote the usual Dol\'eans-Dade exponential, $\mathcal{E}(X)_t\coloneqq \exp\left(X_t - \frac{1}{2}\lbrack X,X\rbrack_t\right)$. $d^p\left(\mathcal{L}(Q)\right)_t$ is now the invariant, local quantity we are seeking. This is because taking logarithms and differentiating eliminates the dependency on the prior distribution of $\{Q_t\}$. This in turn is because differences of logarithms of $\{Q_t\}$ are independent of prior distribution. The locality comes from taking the differential rather than a difference. Our next Lemma demonstrates that this quantity can be simply calculated from the coefficients of a diffusion.

We refer to $d^p\left(\mathcal{L}(Q)\right)_t$ as the `market dynamics' or `market differential'. We give the market differential for a diffusion market below. To do so we have to extend our definition of the operation (M) to multiplication of vector-valued random variables with differentials of vector-valued processes.

\begin{definition}
		Let $d^p(X)_t \in\mathds{D}_t\left(\mathbb{R}^m\right)$ and $H\in\mathcal{F}_t\left(\mathbb{R}^m\right)$. We define the \emph{dot product multiplication} of $H$ and $d^p(X)_t$ as
		\begin{equation*}
		H\cdot d^p(X)_t \coloneqq d^p(H\cdot X)_t,
		\end{equation*}
		where the `$\cdot$' on the right-hand side represents the usual dot product between the vector-valued random variable $H$ and the vector-valued process $\{X_s\}_{s\geq 0}$.
\end{definition}

\begin{lemma}[Dynamics of a diffusion market] \label{lemma: dynamics of a diffusion market}
	For the diffusion market in (\ref{eq: diffusion market dynamics}), the market dynamics is $d^p\left(\mathcal{L}(Q)\right)_t = \bm\sigma_t^{-1}(r\bm X_t-\bm \mu_t)\cdot d^p (\bm W)_t$, where we have abbreviated $\bm\sigma(t,\bm X_t) = \bm\sigma_t$ and $\bm\mu(t,\bm X_t) = \bm\mu_t$.
\end{lemma}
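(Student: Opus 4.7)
The plan is to identify $\mathcal{L}(Q)$ explicitly as a stochastic integral against $\bm W$ using Girsanov's theorem, and then to apply the stochastic Fundamental Theorem of Calculus (Theorem \ref{thm: stochastic version of ftc}) componentwise.

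First I would pin down the market price of risk. Since the market is complete with risk-free rate $r$, the discounted asset prices $e^{-rt}\bm X_t$ must be $\mathbb{Q}$-martingales, so under $\mathbb{Q}$ the dynamics of $\bm X_t$ take the form $d\bm X_t = r\bm X_t \, dt + \bm\sigma_t \, d\tilde{\bm W}_t$ for some $\mathbb{Q}$-Brownian motion $\tilde{\bm W}$. By Girsanov, $d\tilde{\bm W}_t = d\bm W_t - \bm\theta_t \, dt$ where $\bm\theta_t$ is the market price of risk. Comparing drifts with the $\mathbb{P}$-dynamics $d\bm X_t = \bm\mu_t \, dt + \bm\sigma_t \, d\bm W_t$ gives $\bm\mu_t = r\bm X_t - \bm\sigma_t \bm\theta_t$, hence
\begin{equation*}
\bm\theta_t = \bm\sigma_t^{-1}(r\bm X_t - \bm\mu_t),
\end{equation*}
using the invertibility of $\bm\sigma_t$ assumed in Example \ref{ex: diffusion market}.

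Next I would write $Q_t$ as a Doléans-Dade exponential. Girsanov's theorem yields $Q_t = \mathcal{E}\bigl(\int_0^{\cdot} \bm\theta_s \cdot d\bm W_s\bigr)_t$, so by the very definition of the stochastic logarithm as the inverse of $\mathcal{E}$, we obtain
\begin{equation*}
\mathcal{L}(Q)_t = \int_0^t \bm\theta_s \cdot d\bm W_s = \sum_{i=1}^d \int_0^t \theta^i_s \, dW^i_s.
\end{equation*}
This is precisely the point at which the non-local object $Q_t$ is converted into a local one: the stochastic logarithm strips away the running exponential and the quadratic-variation correction, leaving a pure It\^o integral against $\bm W$.

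Now I would apply the stochastic Fundamental Theorem of Calculus. Under the standing regularity on $\bm\mu, \bm\sigma$, the integrand $\bm\theta$ is sample-continuous and adapted, and $\bm W$ has chords bounded in probability at $t$ (Lemma \ref{lemma: ito processes have CBP}). Theorem \ref{thm: stochastic version of ftc} applied componentwise therefore gives
\begin{equation*}
d^p\!\left(\int_0^{\cdot} \theta^i_s \, dW^i_s\right)_t = \theta^i_t \, d^p(W^i)_t,
\end{equation*}
and summing over $i$ (using additivity of $d^p(\cdot)_t$ and the dot-product multiplication just defined) produces
\begin{equation*}
d^p(\mathcal{L}(Q))_t = \bm\theta_t \cdot d^p(\bm W)_t = \bm\sigma_t^{-1}(r\bm X_t - \bm\mu_t) \cdot d^p(\bm W)_t.
\end{equation*}

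The only real obstacle is the first step: justifying $\bm\theta_t = \bm\sigma_t^{-1}(r\bm X_t - \bm\mu_t)$ rigorously from completeness of the market. In \cite{armstrong2018classifying} this identification of the market price of risk for a diffusion market is presumably standard, and one can cite it; otherwise it requires checking that the candidate $Q_t = \mathcal{E}(\int_0^\cdot \bm\theta_s \cdot d\bm W_s)_t$ defines an equivalent measure (Novikov or a localisation argument) under which $c_t(X) = e^{-r(T-t)}\mathbb{E}^{\mathbb{Q}}[X\mid \mathcal{F}_t]$ holds for basis contracts $X = \alpha_0 + \sum_i \alpha_i X^i_T$. Once this is in hand, the remaining steps are essentially formal consequences of Theorem \ref{thm: stochastic version of ftc} and the definition of $\mathcal{L}$.
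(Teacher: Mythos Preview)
Your proposal is correct and follows essentially the same route as the paper: identify $Q_t=\mathcal{E}\bigl(\int_0^{\cdot}\bm\sigma_s^{-1}(r\bm X_s-\bm\mu_s)\cdot d\bm W_s\bigr)_t$, take the stochastic logarithm, and apply the stochastic Fundamental Theorem of Calculus. The only difference is that the paper simply cites \cite{armstrong2018classifying} for the form of $Q_t$, whereas you sketch the Girsanov drift-matching argument for $\bm\theta_t$ explicitly; your final caveat about justifying this step is exactly what the paper handles by citation.
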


\begin{proof}
	For a diffusion market, the Radon--Nikodym derivative is given by \begin{equation*}
	Q_t = \mathcal{E}\left(\int_{0}^{\cdot}\bm\sigma^{-1}_s (r\bm X_t-\bm \mu_s)\cdot d \bm W_s\right)_t,
	\end{equation*}
	see \cite{armstrong2018classifying} for instance. Hence,
	\begin{equation*}
	d^p\left(\mathcal{L}(Q)\right)_t = d^p\left(\int_{0}^{\cdot}\bm\sigma^{-1}_s(r\bm X_t-\bm\mu_s)\cdot d\bm W_s\right)_t = \bm\sigma^{-1}_t(r\bm X_t-\bm \mu_t)\cdot d^p(\bm W)_t,
	\end{equation*}
	where we have used the Fundamental Theorem of Stochastic Calculus.
\end{proof}

\subsection{Instantaneous portfolios and the one-mutual fund separation theorem}

We now define instantaneous portfolios. An instantaneous portfolio at time $t$ consists of the value of the portfolio and its differential at $t$. In a complete market, the portfolio value and its differential have to obey a specific relationship given by the fact that each discounted asset price is a risk-neutral martingale. Let $\{\tilde{V}_t\}$ denote the discounted prices process of an asset. By Girsanov's theorem, we can show that $\left\{\tilde{V}_t + \left\lbrack \tilde{V}, \mathcal{L}(Q)\right\rbrack_t\right\}$ is a $\mathbb{P}$-martingale. By the chain rule
and the Fundamental Theorem of Stochastic Calculus we find that
\begin{equation*}
d^p\left(\tilde{V} + \left\lbrack \tilde{V}, \mathcal{L}(Q)\right\rbrack\right)_t = -re^{-rt}V_t dt + e^{-rt} d^p(V)_t + e^{-rt} d^p\left\lbrack V,\mathcal{L}(Q)\right\rbrack_t,
\end{equation*}
The conditional mean of the left hand side must be zero in a complete market. This gives us the relationship that a portfolio value and its differential must satisfy to be an instantaneous portfolio.

\begin{definition}[Instantaneous portfolios]
	We define the instantaneous portfolios at time $t$ as the set of all pairs $(X,\eta)\in\mathcal{F}_t\times\mathds{D}^\text{It\^o}_t(L^2)$ such that
	\begin{equation} \label{eq: infinitesimal portfolios condition}
	\langle \eta, dt\rangle + \langle d^p\left(\mathcal{L}(Q)\right)_t\star \eta,dt \rangle = rX.
	\end{equation}
	We denote the set of all instantaneous portfolios at time $t$ by $\mathcal{P}_t$.
\end{definition}
As the inner product is bilinear, it is straightforward to check that the set of all instantaneous-portfolios at time $t$ forms linear space. Furthermore, this space has co-dimension $1$ with the space $\mathcal{F}_t\times \mathds{D}^\text{It\^o}_t(L^2)$.

\begin{lemma}
	$(\mathcal{P}_t,+)$ is an $\mathcal{F}_t$-module where addition and $\mathcal{F}_t$-multiplication are defined component-wise.
\end{lemma}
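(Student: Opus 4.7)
The plan is to show that $\mathcal{P}_t$ is a submodule of the ambient $\mathcal{F}_t$-module $\mathcal{F}_t \times \mathds{D}^\text{It\^o}_t(L^2)$, from which the module axioms follow for free. The ambient space is an $\mathcal{F}_t$-module under component-wise operations: $\mathcal{F}_t$ is itself a ring (hence a module over itself), and by an earlier corollary $(\mathds{D}_t, +)$ is an $\mathcal{F}_t$-module, so $\mathds{D}^\text{It\^o}_t(L^2)$ inherits this structure (using that sums and $\mathcal{F}_t$-multiples of $L^2$ It\^o processes remain $L^2$ It\^o processes). So the only real work is showing that the defining relation
\begin{equation*}
\Phi(X,\eta) \;\coloneqq\; \langle \eta, dt\rangle + \langle d^p(\mathcal{L}(Q))_t\star \eta, dt\rangle - rX \;=\; 0
\end{equation*}
is preserved under sums and under multiplication by an $\mathcal{F}_t$-measurable random variable.

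First I would verify that $\Phi$ is $\mathcal{F}_t$-linear in $(X,\eta)$. The $rX$ term is trivially linear. The term $\langle \eta, dt\rangle$ is linear in $\eta$ by the bilinearity of $\langle \cdot,\cdot\rangle$ established in Lemma \ref{lemma: inner product on space of ito differentials}. For the term $\langle d^p(\mathcal{L}(Q))_t\star \eta, dt\rangle$, I would use two facts: (i) by definition (P), the operation $\star$ satisfies $\alpha \star (H\eta_1 + \eta_2) = H(\alpha\star \eta_1) + \alpha\star \eta_2$ for $H\in\mathcal{F}_t$, which is immediate from the definition $d^p(X)\star d^p(Y) = d^p([X,Y])$ together with the bilinearity of quadratic covariation and operation (M); and (ii) bilinearity of $\langle\cdot,\cdot\rangle$ again. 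Composing, the map $\eta \mapsto \langle d^p(\mathcal{L}(Q))_t \star \eta, dt\rangle$ is $\mathcal{F}_t$-linear.

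Combining, if $(X_1,\eta_1),(X_2,\eta_2)\in\mathcal{P}_t$ and $H\in\mathcal{F}_t(\mathbb{R})$, then
\begin{equation*}
\Phi(X_1+HX_2,\;\eta_1+H\eta_2) \;=\; \Phi(X_1,\eta_1) + H\,\Phi(X_2,\eta_2) \;=\; 0,
\end{equation*}
showing closure under addition and under $\mathcal{F}_t$-multiplication. The remaining module axioms (associativity, distributivity, the unit action, existence of the zero element $(0,0)\in\mathcal{P}_t$ and additive inverses $(-X,-\eta)$) are inherited from the ambient module $\mathcal{F}_t \times \mathds{D}^\text{It\^o}_t(L^2)$.

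The main technical point to be careful about is that the operations do not take us out of $\mathds{D}^\text{It\^o}_t(L^2)$: that is, that $\eta_1+H\eta_2$ and $d^p(\mathcal{L}(Q))_t\star \eta$ remain differentials of square-integrable It\^o processes, so that the conditional expectation in Definition \ref{def: conditional expectation of an ito differential} is well-defined and $\langle\cdot,\cdot\rangle$ makes sense on each term. This follows from closure of $L^2$ It\^o processes under linear combinations with $\mathcal{F}_t$-measurable coefficients, and from the fact that the quadratic covariation of two $L^2$ It\^o processes is an absolutely continuous finite variation process whose representative in $\mathds{D}^\text{It\^o}_t(L^2)$ can be taken to be square integrable provided $d^p(\mathcal{L}(Q))_t$ itself comes from a square-integrable representative; for the diffusion markets of interest (Lemma \ref{lemma: dynamics of a diffusion market}) this holds under standard integrability assumptions on the market price of risk $\bm\sigma_t^{-1}(r\bm X_t - \bm\mu_t)$.
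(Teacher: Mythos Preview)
Your proposal is correct and follows the same approach the paper takes: the paper does not give a formal proof of this lemma, but immediately before it observes that ``as the inner product is bilinear, it is straightforward to check that the set of all instantaneous-portfolios at time $t$ forms a linear space,'' and that it has co-dimension $1$ in $\mathcal{F}_t\times \mathds{D}^\text{It\^o}_t(L^2)$. Your argument simply makes this explicit by checking that the defining functional $\Phi$ is $\mathcal{F}_t$-linear, which is exactly the intended reasoning; your closing caveat about closure in $\mathds{D}^\text{It\^o}_t(L^2)$ under multiplication by unbounded $\mathcal{F}_t$-measurable scalars is a fair technical point that the paper leaves implicit.
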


\begin{definition}[Canonical portfolio market]
	We define the canonical portfolio market, $(\mathcal{P}_t,\mathcal{R},\mathcal{C},p)$, as the space of instantaneous portfolios $\mathcal{P}_t$ together with the following maps which respectively corresponds to measure of the risk, cost and payoff of instantaneous portfolios:
	\begin{itemize}
		\item A symmetric bilinear form that is almost-surely positive semi-definite 
		\begin{equation*}
		\mathcal{R}:\mathcal{P}_t\times \mathcal{P}_t \ni \left((X,\eta),(\tilde{X},\tilde{\eta})\right) \mapsto q(\eta,\tilde{\eta})\in\mathcal{F}_t,
		\end{equation*}
		\item Two linear functionals $\mathcal{C}:\mathcal{P}_t\ni (X,\eta)\mapsto X \in \mathcal{F}_t$ and $p:\mathcal{P}_t\ni(X,\eta)\mapsto X + \mathbb{E}_t[\eta]\in\mathcal{F}_t$,
	\end{itemize}
where $\mathbb{E}_t[\cdot]$ and $q(\cdot,\cdot)$ are defined in Definition \ref{def: conditional expectation of an ito differential} and Lemma \ref{lemma: inner product on space of ito differentials} respectively.
\end{definition}

Let us recall the following definitions from \cite{Armstrong_2018}.
\begin{definition}[Expected return and relative risk \cite{Armstrong_2018}]
Let $\pi\in\mathcal{P}_t$ and assume $\mathbb{P}\left(\mathcal{C}(\pi)= 0\right) = 0$. The expected return and the relative risk of $\pi$ are respectively defined by $\text{ER}(\pi)\coloneqq \frac{p(\pi)-\mathcal{C}(\pi)}{\mathcal{C}(\pi)}$ and	$\text{RR}(\pi) \coloneqq \frac{\sqrt{\mathcal{R}(\pi,\pi)}}{\mathcal{C}(\pi)}$.
\end{definition}

\begin{definition}[Risk-free portfolios \cite{Armstrong_2018}]
	An instantaneous portfolio $\pi\in\mathcal{P}_t$ is said to be risk-free if $\mathcal{R}(\pi,\pi)=0$ almost-surely.
\end{definition}
By Lemma 1.6 of \cite{Armstrong_2018}, it is equivalent to say that a risk-free portfolio is one for which $\mathcal{R}(\pi,\tilde{\pi})=0$ almost-surely for all $\tilde{\pi}\in\ker(\mathbb{E}_t)$.

\begin{theorem}[Instantaneous one-mutual fund separation theorem] \label{theorem: one mutual fund}
	In the canonical portfolio market, the set of risk-minimising portfolios is an $\mathcal{F}_t$-submodule of $\mathcal{P}_t$ of dimension $2$ and contains risk-free portfolios. For any feasible payoff and cost, there is an associated risk-minimising portfolio. This is called the one-mutual fund separation theorem because the space of risk-minimising portfolios over all $\mathcal{P}_t$ can be spanned by one risk-free portfolio and one portfolio that is risk-minimising amongst all those portfolios with zero expected return.
\end{theorem}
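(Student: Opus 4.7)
The plan is to recast the definition of $\mathcal{P}_t$ as a constrained quadratic minimisation on $\mathds{D}^\text{It\^o}_t(L^2)$ and solve it in closed form by a pathwise Cauchy--Schwarz argument; the module structure and the separation conclusion will then both fall out by inspecting the explicit minimiser. Throughout, I write $\mu\coloneqq d^p(\mathcal{L}(Q))_t$ and $q(\cdot,\cdot)\coloneqq\mathbb{E}_t[\cdot\star\cdot]$ as in Lemma \ref{lemma: inner product on space of ito differentials}.

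The first step is to simplify (\ref{eq: infinitesimal portfolios condition}). Because the process $t$ has zero quadratic variation, $dt\star\eta=0$ for every $\eta$, and $\mathbb{E}_t[dt]=1$, so $\langle\eta,dt\rangle=\mathbb{E}_t[\eta]$. Since $Q=\mathcal{E}(\mathcal{L}(Q))$ is a $\mathbb{P}$-martingale, $\mathcal{L}(Q)$ is a local martingale and hence $\mathbb{E}_t[\mu]=0$. Consequently (\ref{eq: infinitesimal portfolios condition}) is equivalent to $\mathbb{E}_t[\eta]+q(\mu,\eta)=rX$. I then decompose $\eta=m\cdot dt+\eta'$, where $m\coloneqq\mathbb{E}_t[\eta]=p(\pi)-\mathcal{C}(\pi)$ and $\eta'\in\ker(\mathbb{E}_t)$. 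Because $dt\star dt$ and $dt\star\eta'$ both vanish, $\mathcal{R}(\pi,\pi)=q(\eta',\eta')$, and using $q(\mu,dt)=0$ the constraint reduces to $q(\mu,\eta')=rX-m$, a single scalar linear constraint on $\eta'\in\ker(\mathbb{E}_t)$ once $(X,m)\in\mathcal{F}_t\times\mathcal{F}_t$ is fixed.

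By the almost-sure positive definiteness of $q$ on $\ker(\mathbb{E}_t)$ supplied by Lemma \ref{lemma: inner product on space of ito differentials}, the pathwise Cauchy--Schwarz inequality identifies (on the event $\{q(\mu,\mu)>0\}$) the unique minimiser as $\eta'=\tfrac{rX-m}{q(\mu,\mu)}\mu$, achieving minimal risk $(rX-m)^2/q(\mu,\mu)$. Hence every admissible $(X,m)$ yields the risk-minimising portfolio
\begin{equation*}
\pi^{*}(X,m)=\Bigl(X,\;m\cdot dt+\tfrac{rX-m}{q(\mu,\mu)}\mu\Bigr),
\end{equation*}
and the assignment $(X,m)\mapsto\pi^{*}(X,m)$ is $\mathcal{F}_t$-linear, exhibiting the set of risk-minimising portfolios as a free $\mathcal{F}_t$-submodule of $\mathcal{P}_t$ of rank $2$. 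Setting $m=rX$ eliminates the $\mu$-component, producing the risk-free subfamily $(X,rX\cdot dt)$ with $\mathcal{R}(\pi,\pi)=0$. For the one-mutual fund statement I would choose the basis $\pi_1=(1,r\cdot dt)$ (risk-free) and $\pi_2=(1,\tfrac{r}{q(\mu,\mu)}\mu)$ (risk-minimising with zero expected return, as $m/X=0$), and verify algebraically that $\pi^{*}(X,m)=\tfrac{m}{r}\pi_1+\tfrac{rX-m}{r}\pi_2$.

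The principal technical subtlety is the Cauchy--Schwarz step itself: the form $q$ takes values in $\mathcal{F}_t$ rather than in $\mathbb{R}$, so one must either condition on $\mathcal{F}_t$ and invoke the classical scalar inequality or use directly the pathwise version built into Lemma \ref{lemma: inner product on space of ito differentials}. A secondary point is the degenerate event $\{q(\mu,\mu)=0\}$, where the market carries no infinitesimal risk: there the constraint forces $m=rX$ and every $\eta'\in\ker(\mathbb{E}_t)$ is trivially risk-minimising, so the module description extends by convention even though the rank-$2$ parametrisation collapses.
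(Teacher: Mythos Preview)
Your argument is correct. The decomposition $\eta=m\cdot dt+\eta'$, the reduction of the constraint to $q(\mu,\eta')=rX-m$, and the pathwise Cauchy--Schwarz identification of the minimiser are all sound, and the rank-$2$ module description and basis verification follow exactly as you say. Your handling of the $\mathcal{F}_t$-valued form and of the degenerate event $\{q(\mu,\mu)=0\}$ is also appropriate.

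The route, however, is genuinely different from the paper's. The paper does not prove the theorem directly: it simply observes that the canonical portfolio market falls within the abstract Markowitz category of \cite{Armstrong_2018} and invokes Theorem~2.3 there. Your proof is self-contained and concrete, carrying out the constrained quadratic minimisation explicitly. In effect you have folded into the theorem the content that the paper defers to the subsequent results: the identification of the risk-free portfolio $(X,rX\,dt)$ in Lemma~\ref{lemma: instantaneous market portfolio}'s predecessor, the Cauchy--Schwarz argument for the market portfolio in Lemma~\ref{lemma: instantaneous market portfolio}, and the explicit spanning formula in the corollary that follows. What the paper's approach buys is modularity---the one-fund theorem is an instance of a general categorical result---whereas your approach buys transparency and independence from the external reference, at the cost of repeating computations that the paper has chosen to isolate in separate lemmas.
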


\begin{proof}
	As we are in the framework of the Markowitz Category, the proof is a direct application of Theorem $2.3$ of \cite{Armstrong_2018}.
\end{proof}

\begin{lemma}
	There is a unique risk-free instantaneous portfolio with cost $X$, namely $(X,rX dt)$. We refer to it as the risk-free portfolio with cost $X$ and write it $\pi_X^0$.
\end{lemma}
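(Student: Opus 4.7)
The plan is to first translate the risk-free condition into a structural statement about the differential $\eta$, and then substitute into the instantaneous portfolio equation to pin down its drift. Concretely, fix any representative It\^o process $X_s = X_t + \int_t^s \mu_u\,du + \int_t^s \sigma_u \cdot dW_u$ of $\eta \in \mathds{D}^{\text{It\^o}}_t(L^2)$. Then $\eta \star \eta = d^p([X])_t = \lVert \sigma_t\rVert^2\,dt$, and hence $\mathcal{R}(\pi,\pi) = q(\eta,\eta) = \mathbb{E}_t[\eta \star \eta] = \lVert \sigma_t\rVert^2$. The portfolio $\pi = (X,\eta)$ is risk-free if and only if $\sigma_t = 0$ almost surely, in which case $\eta$ is represented by a process with vanishing martingale part at $t$, so $\eta = \mu\,dt$ for some $\mu \in \mathcal{F}_t$.

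Next I would verify that $(X, rX\,dt)$ is an instantaneous portfolio. Using the bilinearity of $\langle \cdot,\cdot\rangle$ together with $\mathbb{E}_t[dt]=1$ and $dt \star dt = d^p([(s\mapsto s)])_t = 0$, one gets $\langle \mu\,dt,\,dt\rangle = \mathbb{E}_t[\mu\,dt]\,\mathbb{E}_t[dt] + \mathbb{E}_t[\mu\,dt\star dt] = \mu$ for any $\mathcal{F}_t$-measurable $\mu$. For the cross term, note that $\mathcal{L}(Q)$ is a continuous local martingale, and the process $s \mapsto s$ has finite variation, so $[\mathcal{L}(Q),(s\mapsto s)] \equiv 0$; consequently $d^p(\mathcal{L}(Q))_t \star \mu\,dt = \mu\,d^p([\mathcal{L}(Q),(s\mapsto s)])_t = 0$, and the second inner product $\langle d^p(\mathcal{L}(Q))_t \star \mu\,dt,\,dt\rangle$ vanishes. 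Plugging $\eta = rX\,dt$ into (\ref{eq: infinitesimal portfolios condition}) yields $rX + 0 = rX$, confirming feasibility.

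For uniqueness, suppose $(X,\eta)$ is another risk-free instantaneous portfolio with cost $X$. By the first paragraph we may write $\eta = \mu\,dt$ for some $\mu \in \mathcal{F}_t$. The same computation as above reduces (\ref{eq: infinitesimal portfolios condition}) to $\mu = rX$, so $\eta = rX\,dt$, and the risk-free portfolio with cost $X$ is unique.

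The main thing to get right is the cross term $\langle d^p(\mathcal{L}(Q))_t \star \eta,\,dt\rangle$: the key observation is that $\star$ is quadratic covariation of the underlying semimartingales, and pairing any semimartingale differential with $dt$ via $\star$ gives zero because $(s\mapsto s)$ has finite variation. Once this is in hand, the rest is routine bilinearity of $\langle\cdot,\cdot\rangle$ and the identification of $\mathbb{E}_t[\,\cdot\,]$ with the drift, so there is no substantial analytic obstacle beyond bookkeeping of the algebraic structure on $\mathds{D}^{\text{It\^o}}_t(L^2)$.
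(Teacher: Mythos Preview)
Your proof is correct and follows the same approach as the paper: identify that risk-free forces $\eta=\lambda\,dt$, then substitute into equation~(\ref{eq: infinitesimal portfolios condition}) to get $\lambda=rX$. The paper states this in two lines and leaves both steps implicit, whereas you spell out why $\mathcal{R}(\pi,\pi)=0$ forces the diffusion coefficient to vanish and carefully compute each term of the inner products (in particular the vanishing of the cross term via $[\mathcal{L}(Q),(s\mapsto s)]=0$); but the underlying argument is identical.
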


\begin{proof}
	All risk-free portfolios have the form $(X,\lambda  dt)$. Plugging this in equation (\ref{eq: infinitesimal portfolios condition}) we find that $\lambda = rX$.
\end{proof}

\begin{lemma}[Instantaneous market portfolio] \label{lemma: instantaneous market portfolio}
	There is a unique risk-minimising portfolio amongst all those portfolios with zero expected return and cost $X$. This is
	\begin{equation*}
\left(X,\frac{rX}{\left\langle d^p\left(\mathcal{L}(Q)\right)_t,d^p\left(\mathcal{L}(Q)\right)_t\right\rangle} d^p\left(\mathcal{L}(Q)\right)_t\right).
	\end{equation*}
	We refer to it as the instantaneous market portfolio with cost $X$ write $\pi_X^\text{market}$.
\end{lemma}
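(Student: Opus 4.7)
The plan is to simplify the portfolio constraint using the structure of the inner product, reduce the problem to a constrained minimisation on $\ker(\mathbb{E}_t)$, and then apply Cauchy--Schwarz.

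First I would simplify $\langle \eta, dt\rangle$. Because $dt = d^p(s \mapsto s)_t$ has zero quadratic covariation with any continuous semimartingale, $dt \star \eta = 0$, and clearly $\mathbb{E}_t[dt] = 1$; hence $\langle \eta, dt\rangle = \mathbb{E}_t[\eta]$ for every $\eta \in \mathds{D}^\text{It\^o}_t(L^2)$. In particular, the defining identity (\ref{eq: infinitesimal portfolios condition}) for an instantaneous portfolio becomes
\begin{equation*}
\mathbb{E}_t[\eta] + \mathbb{E}_t\!\left[d^p\left(\mathcal{L}(Q)\right)_t \star \eta\right] = rX.
\end{equation*}
Next I would observe that $Q$ is a strictly positive $\mathbb{P}$-martingale, so its stochastic logarithm $\mathcal{L}(Q)$ is a local martingale and therefore $d^p(\mathcal{L}(Q))_t \in \ker(\mathbb{E}_t)$. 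This means $\langle \eta, d^p(\mathcal{L}(Q))_t\rangle = \mathbb{E}_t[\eta \star d^p(\mathcal{L}(Q))_t]$, and the portfolio constraint reads $\mathbb{E}_t[\eta] + \langle \eta, d^p(\mathcal{L}(Q))_t\rangle = rX$. Imposing zero expected return, $\mathbb{E}_t[\eta] = 0$, reduces the constraint to
\begin{equation*}
\langle \eta, d^p(\mathcal{L}(Q))_t\rangle = rX.
\end{equation*}

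At this point the problem is a clean constrained minimisation. Since $\mathbb{E}_t[\eta] = 0$, the risk simplifies to $\mathcal{R}(\pi,\pi) = q(\eta,\eta) = \langle \eta,\eta\rangle$, so I would minimise $\langle \eta,\eta\rangle$ over the affine subspace of $\ker(\mathbb{E}_t)$ cut out by the single linear constraint above. Applying the Cauchy--Schwarz inequality associated with the almost-surely positive definite inner product from Lemma \ref{lemma: inner product on space of ito differentials} yields
\begin{equation*}
(rX)^2 \;=\; \langle \eta, d^p(\mathcal{L}(Q))_t\rangle^2 \;\le\; \langle \eta,\eta\rangle \,\cdot\, \left\langle d^p(\mathcal{L}(Q))_t, d^p(\mathcal{L}(Q))_t \right\rangle,
\end{equation*}
with equality almost-surely if and only if $\eta$ is $\mathcal{F}_t$-proportional to $d^p(\mathcal{L}(Q))_t$. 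Writing $\eta = \lambda \, d^p(\mathcal{L}(Q))_t$ and substituting into the constraint gives $\lambda = rX / \langle d^p(\mathcal{L}(Q))_t, d^p(\mathcal{L}(Q))_t\rangle$, which is exactly the claimed formula, and uniqueness follows from the equality case in Cauchy--Schwarz.

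The main point requiring care is ensuring $\langle d^p(\mathcal{L}(Q))_t, d^p(\mathcal{L}(Q))_t\rangle$ is almost-surely strictly positive, so that $\lambda$ and the division in the formula are well defined; this is where the positive definiteness conclusion of Lemma \ref{lemma: inner product on space of ito differentials} is essential, together with the market being non-degenerate (in the diffusion-market example of Lemma \ref{lemma: dynamics of a diffusion market}, $\boldsymbol{\sigma}_t$ being invertible delivers this whenever $r\boldsymbol{X}_t \neq \boldsymbol{\mu}_t$). The rest is routine verification that the candidate $\pi^{\text{market}}_X$ indeed lies in $\mathcal{P}_t$ and has zero expected return, both of which are immediate from $d^p(\mathcal{L}(Q))_t \in \ker(\mathbb{E}_t)$ and the linearity of the constraint.
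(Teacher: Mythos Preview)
Your proof is correct and follows essentially the same route as the paper: both reduce the portfolio constraint under zero expected return to $q(\eta,d^p(\mathcal{L}(Q))_t)=rX$ (equivalently $\langle\eta,d^p(\mathcal{L}(Q))_t\rangle=rX$ on $\ker(\mathbb{E}_t)$) and then apply Cauchy--Schwarz, with equality forcing $\eta$ proportional to $d^p(\mathcal{L}(Q))_t$. Your write-up is slightly more explicit about why $\langle\eta,dt\rangle=\mathbb{E}_t[\eta]$, why $d^p(\mathcal{L}(Q))_t\in\ker(\mathbb{E}_t)$, and about the nondegeneracy needed to divide, whereas the paper phrases the minimisation in terms of relative risk $\text{RR}$ and is terser; since the cost is fixed these formulations are equivalent.
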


\begin{proof}
	Let $(X,\eta)\in\mathcal{P}_t$ have zero expected return. Then $\mathbb{E}_t[\eta]=0$ and so $X= \frac{q\left(\eta,d^p\left(\mathcal{L}(Q)\right)_t\right)}{r}$. Note that $q(\cdot ,\cdot)$ is a pathwise inner product when restricted to portfolios with zero expected return. Hence, by the Cauchy-Schwartz inequality,
	\begin{equation*}
	\frac{r}{\sqrt{q\left(d^p\left(\mathcal{L}(Q)\right)_t,d^p\left(\mathcal{L}(Q)\right)_t\right)}} \leq \frac{r\sqrt{q(\eta,\eta)}}{q\left(\eta,d^p\left(\mathcal{L}(Q)\right)_t\right)} = \text{RR}\left(X,\eta\right).
	\end{equation*}
	This lower bound is attained if and only if $\eta$ and $d^p\left(\mathcal{L}(Q)\right)_t$ are linearly dependent which gives the result. Note that
	\begin{equation*}
	    \left\langle d^p\left(\mathcal{L}(Q)\right)_t,d^p\left(\mathcal{L}(Q)\right)_t\right\rangle = q\left( d^p\left(\mathcal{L}(Q)\right)_t,d^p\left(\mathcal{L}(Q)\right)_t\right).
	\end{equation*}
\end{proof}

\begin{corollary}
	For a given cost $X$ and payoff $\mu$ the risk-minimising portfolio in Theorem \ref{theorem: one mutual fund} is $\pi^\star = \frac{\mu}{r} \pi_X^0 + \frac{r-\mu}{r} \pi_X^\text{market}$.
\end{corollary}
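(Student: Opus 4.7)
The plan is to exploit Theorem \ref{theorem: one mutual fund}, which already establishes that the $\mathcal{F}_t$-submodule of risk-minimising portfolios is two-dimensional, containing risk-free portfolios, and spanned by one risk-free portfolio together with one risk-minimising portfolio of zero expected return. Since $\pi_X^0$ and $\pi_X^\text{market}$, constructed uniquely in the two preceding lemmas, are precisely such a pair of generators with common cost $X$, any risk-minimising portfolio admits a unique decomposition $\pi^\star = a\,\pi_X^0 + b\,\pi_X^\text{market}$ with $a, b \in \mathcal{F}_t$; linear independence follows from the fact that $\pi_X^0$ has nonzero expected return while $\pi_X^\text{market}$ has zero expected return by construction.

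The remaining task is to pin down $a$ and $b$ from the two constraints on cost and payoff. By $\mathcal{F}_t$-linearity of $\mathcal{C}$ together with $\mathcal{C}(\pi_X^0) = \mathcal{C}(\pi_X^\text{market}) = X$, one obtains $\mathcal{C}(\pi^\star) = (a+b)X$, so matching the required cost $X$ forces $a+b=1$. For the payoff, interpreted as the expected return rate $\mu = \mathrm{ER}(\pi^\star)$, the contribution of the market basis vector vanishes by Lemma \ref{lemma: instantaneous market portfolio}, and for the risk-free basis vector I would choose the representative $Y_s \coloneqq rXs$ of the differential $rX\,dt$ and apply Definition \ref{def: conditional expectation of an ito differential} to compute $\mathbb{E}_t[rX\,dt] = rX$. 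This yields $p(\pi_X^0) - \mathcal{C}(\pi_X^0) = rX$ and hence $\mathrm{ER}(\pi_X^0) = r$, whereas $\mathrm{ER}(\pi_X^\text{market}) = 0$.

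Combining linearity of $p$ and $\mathcal{C}$ with the constraint $a+b=1$, I would deduce $p(\pi^\star) - \mathcal{C}(\pi^\star) = arX$, so that $\mathrm{ER}(\pi^\star) = ar$, and matching $\mu$ produces $a = \mu/r$ and $b = (r-\mu)/r$, as claimed. The main potential subtlety is that expected return is not itself linear on $\mathcal{P}_t$, but this is circumvented by working with the genuinely linear quantities $p$ and $\mathcal{C}$ and dividing only at the end; the real substance of the result, namely the two-dimensionality of the risk-minimising submodule and the existence of a risk-minimiser for each feasible cost and payoff, has already been supplied by Theorem \ref{theorem: one mutual fund}.
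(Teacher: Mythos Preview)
Your proof is correct and follows essentially the same route as the paper's: write the risk-minimising portfolio as an $\mathcal{F}_t$-linear combination of $\pi_X^0$ and $\pi_X^{\text{market}}$ via Theorem~\ref{theorem: one mutual fund}, then solve for the two coefficients using the cost constraint $a+b=1$ and the payoff/expected-return constraint $\mathbb{E}_t[\eta^\star]=\mu X$, which gives $a=\mu/r$. Your extra remarks on linear independence and on working with the linear maps $p,\mathcal{C}$ rather than $\mathrm{ER}$ directly are sound but not needed beyond what the paper already assumes.
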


\begin{proof}
	By the one-mutual fund theorem, we know that the risk-minimising portfolio has differential $\lambda_0  \eta_X^0 + \lambda \eta_X^\text{market}$ for some $\mathcal{F}_t$-measurable random variables $\lambda_0$ and $\lambda$. But the cost of this portfolio is $\lambda_0 X + \lambda X$. Hence $\lambda_0 + \lambda = 1$. Furthermore,
	$\mu X = \mathbb{E}_t\left[\lambda_0 \eta_X^0+\lambda \eta_X^\text{market}\right]= \lambda_0rX$ which completes the proof.
\end{proof}

\begin{definition}[Absolute market price of risk]
We define the \textit{absolute market price of risk} as the reward-to-risk ratio of the instantaneous market portfolio. Namely,
	\begin{equation*}
	\text{AMPR}_t \coloneqq \left\lvert \frac{\text{ER}(\pi_X^\text{market})-\text{ER}(\pi_X^\text{hedge})}{\text{RR}(\pi_X^\text{market})} \right\rvert.
	\end{equation*}
\end{definition}	
For a diffusion market as in (\ref{eq: diffusion market dynamics}), this definition coincides with the one of $\text{AMPR}_t$ found in \cite{armstrong2018classifying}. Our definition generalises it and provides a financial motivation to the definition in terms of instantaneous portfolios.	
\begin{corollary}
\begin{equation*}
    \text{AMPR}_t = \sqrt{q\left(d^p\left(\mathcal{L}(Q)\right)_t,d^p\left(\mathcal{L}(Q)\right)_t\right)} = \sqrt{\left\langle d^p\left(\mathcal{L}(Q)\right)_t,d^p\left(\mathcal{L}(Q)\right)_t\right\rangle}.
\end{equation*}
\end{corollary}

\begin{theorem}
	The space of invariant portfolios in a market with deterministic $\text{AMPR}_t$ and generated by a $n$-dimensional Brownian motion is a two-dimensional $\mathcal{F}_t$-module spanned by $\pi^0_X$ and $\pi^\text{market}_X$ where $X$ is any non-zero cost.
\end{theorem}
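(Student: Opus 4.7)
The plan is to identify the invariant instantaneous differentials as the $\mathcal{F}_t$-submodule of $\mathds{D}^\text{It\^o}_t(L^2)$ generated by $dt$ and $d^p(\mathcal{L}(Q))_t$, to impose the portfolio constraint \eqref{eq: infinitesimal portfolios condition} on a generic element of this submodule, and then to verify that $\pi^0_X$ and $\pi^\text{market}_X$ form a basis of the resulting module. The underlying observation is that the only locally defined market invariants in this setting are the deterministic time differential $dt$ and the market differential $d^p(\mathcal{L}(Q))_t$, in the sense developed in \cite{armstrong2018classifying}; consequently any invariant portfolio must take the form $\pi = (X,\,\alpha\,dt + \beta\, d^p(\mathcal{L}(Q))_t)$ for some $X,\alpha,\beta \in \mathcal{F}_t$.

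First I would evaluate \eqref{eq: infinitesimal portfolios condition} on such a $\pi$. Because $\mathcal{L}(Q)$ is a local martingale, $\mathbb{E}_t[d^p(\mathcal{L}(Q))_t] = 0$, and because $dt$ has vanishing quadratic covariation with any It\^o differential, $dt\star\xi = 0$ for all $\xi$. Combining this with $\mathbb{E}_t[dt]=1$ and $\mathbb{E}_t[d^p(\mathcal{L}(Q))_t \star d^p(\mathcal{L}(Q))_t]= \text{AMPR}_t^2$, the two inner products in \eqref{eq: infinitesimal portfolios condition} reduce to $\alpha$ and $\beta\,\text{AMPR}_t^2$ respectively, so the constraint collapses to the single linear relation $\alpha + \beta\, \text{AMPR}_t^2 = rX$. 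Hence the invariant portfolios form a rank-two $\mathcal{F}_t$-module, parametrised for example by the free pair $(X,\beta)$.

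Next I would check that $\pi^0_X$ and $\pi^\text{market}_X$ span this module whenever $X$ is almost-surely non-zero. Both are manifestly invariant. For linear independence, a relation $c_1 \pi^0_X + c_2\pi^\text{market}_X = 0$ forces $(c_1+c_2)X = 0$ at the cost level and $c_1 r X\, dt + c_2 (rX/\text{AMPR}_t^2)\, d^p(\mathcal{L}(Q))_t = 0$ at the differential level; the linear independence of $dt$ and $d^p(\mathcal{L}(Q))_t$ in $\mathds{D}^\text{It\^o}_t$ (valid because deterministic non-zero $\text{AMPR}_t$ ensures that $d^p(\mathcal{L}(Q))_t$ is a non-degenerate generator) then gives $c_1 = c_2 = 0$. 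For spanning, the assignment $c_1 = \alpha/(rX)$ and $c_2 = \beta\,\text{AMPR}_t^2/(rX)$ recovers any target invariant portfolio $(X',\alpha\,dt+\beta\,d^p(\mathcal{L}(Q))_t)$ satisfying the constraint, as one checks by using $\alpha+\beta\,\text{AMPR}_t^2 = rX'$ to evaluate the cost of the combination.

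The hard part will be the first step, namely characterising the invariant submodule of $\mathds{D}^\text{It\^o}_t(L^2)$. One must argue that any differential constructed functorially from the market data --- that is, preserved by arbitrary market morphisms in the sense of \cite{armstrong2018classifying} --- is expressible as an $\mathcal{F}_t$-linear combination of $dt$ and $d^p(\mathcal{L}(Q))_t$; the remaining steps are then essentially formal linear algebra in the 2-dimensional module obtained after imposing the portfolio constraint. The deterministic AMPR hypothesis enters exactly here, ensuring that the two generators are genuinely independent so that the module has dimension exactly two, rather than degenerating (for instance, if $\text{AMPR}_t = 0$ then $d^p(\mathcal{L}(Q))_t$ is null in the $q$-seminorm and the space collapses).
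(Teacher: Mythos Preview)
Your linear-algebra reduction is clean and correct once you have the characterisation of invariant differentials, and you correctly flag that this characterisation is the crux. But as written you have only \emph{asserted} that the invariant submodule of $\mathds{D}^\text{It\^o}_t(L^2)$ is generated by $dt$ and $d^p(\mathcal{L}(Q))_t$; you have not supplied an argument, and appealing to \cite{armstrong2018classifying} in the abstract does not constitute one. That is the genuine gap.

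The paper closes this gap by a different and more concrete route. Rather than first classifying invariant differentials and then imposing the portfolio constraint, it works directly at the level of portfolios. Using the classification theorem of \cite{armstrong2018classifying}, it reduces (via isomorphism) to a canonical Bachelier market, in which $d^p(\mathcal{L}(Q))_t$ is proportional to $d^p(W^1)_t$. It then completes $\{\pi^0_X,\pi^\text{market}_X\}$ to an orthonormal system $\{\pi^0_X,\pi^\text{market}_X,\pi_2,\dots,\pi_n\}$ of portfolios and exhibits an \emph{explicit} market isomorphism --- the reflection $(e_1,e_2,\dots,e_n)\mapsto(e_1,-e_2,\dots,-e_n)$ on the driving Brownian directions --- which fixes the first two basis portfolios and negates the remaining ones. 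Any invariant portfolio must be fixed by this map, forcing the $\pi_2,\dots,\pi_n$ coefficients to vanish. This symmetry argument is what your proposal is missing: you need at least one non-trivial automorphism acting non-trivially on the complement of $\operatorname{span}\{dt,d^p(\mathcal{L}(Q))_t\}$ to rule out extra invariant directions, and the paper produces exactly such a map after passing to canonical form.

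So your outline would become a complete proof if you inserted this reduction-plus-reflection step (or an equivalent symmetry argument) in place of the unproved ``hard part''; the subsequent constraint computation and basis verification you give would then carry through essentially unchanged.
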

This is a general mutual fund theorem that improves Theorem $3.15$ of \cite{armstrong2018classifying} by identifying the invariant instantaneous portfolios and not just invariant processes. The interpretation is that any instantaneous portfolio choice arising from a convex, invariant financial problem lies in the span of $\pi^0_X$ and $\pi^\text{market}_X$.

\begin{proof}
	Without loss of generality the market is a canonical Bachelier market (by the classification theorem of \cite{armstrong2018classifying}). The risk-free portfolio is given by $\pi^0_X = (X,rX dt)$. By Lemmas \ref{lemma: dynamics of a diffusion market} and \ref{lemma: instantaneous market portfolio}, the market portfolio is given by $\pi^\text{market}_X = \left(X, -\frac{rX}{A(t)}e_1\cdot d^p(\bm W)_t\right)=\left(X,-\frac{rX}{A(t)}d^p(W^1)_t\right)$, where $\{e_1,\dots,e_n\}$ is the standard basis for $\mathbb{R}^n$. As $dt$ and $d^p(W^1)_t$ are invariant, so are these two portfolios. Choose orthonormal portfolios $\pi_2,\dots,\pi_{n}$ such that the $\{\pi_i\}_{i=2}^n$ are orthogonal to both $\pi^0_X$ and $\pi^\text{market}_X$. Any instantaneous portfolio may be written as $\alpha_0 \pi^0_X + \alpha_1 \pi^\text{market}_X + \sum_{i=2}^{n}\alpha_i \pi^i$. The map $(e_1,e_2,\dots,e_n)\rightarrow(e_1,-e_2,\dots,-e_n)$ acting on $\mathbb{R}^n$ induces an isomorphism of the Bachelier market which sends
	\begin{equation*}
	(\alpha_0,\alpha_1,\alpha_2,\dots,\alpha_{n})\mapsto (\alpha_0,\alpha_1,-\alpha_2,\dots,-\alpha_{n})
	\end{equation*}
	
	Any invariant portfolio must be fixed under this mapping. So such portfolios must be a linear combination of $\pi^0_X$ and $\pi_X^\text{market}$.
\end{proof}

\section{It\^o stochastic differentials using convergence in mean} \label{section: stochastic differentials using convergence in mean}
This is close in spirit to the work of Nelson in \cite{nelson1967bm}. The definitions of mean forward derivative is taken from \cite{nelson1967bm} and we use it to define the zero differential. A similar definition is used in \cite{backhoffveraguas2019adapted} to define a metric between probability measures.

We prove a Fundamental Theorem of Calculus is possible, but only for uniformly bounded integrands. Unfortunately, no version of It\^o's lemma can hold for processes that do not have all moments defined. Indeed, our definition of the differential using convergence in expectation relies on the existence of the first two moments of the stochastic process. Suppose $X_t$ has only its first $N$ moments defined, then there is no hope of formulating It\^o's lemma for the function $f(x)=x^{N+1}$.

\begin{definition}[Mean-forward derivative \cite{nelson1967bm}]
Let $X_t$ be a sample continuous process which has finite second moments. Let
	\begin{align*}
	\mu(X)_t &\coloneqq  \lim_{h\rightarrow 0^+}\frac{1}{h}\mathbb{E}\left[X_{t,t+h}\big\lvert \mathcal{F}_t\right]
	\intertext{and}
	\sigma\sigma^\top(X)_t &\coloneqq  \lim_{h\rightarrow 0^+}\frac{1}{h}\mathbb{E}\left[(X_{t,t+h}-h\mu(X)_t)(X_{t,t+h}-h\mu(X)_t)^\top\big\lvert \mathcal{F}_t\right],
	\end{align*}
	when those limits exist in $L^1$ and the mappings $t\mapsto \mu(X)_t$ and $t\mapsto \sigma\sigma^\top(X)_t$ are $L^1$-continuous. $\mu(X)_t$ is known as the \emph{mean forward derivative}.
\end{definition}

In virtue of Theorem 11.9 of \cite{nelson1967bm}, we may now focus only on the case when $\left\{X_t\right\}$ to be an It\^o process of the form $X_t = X_0 + \int_{0}^{t}\mu_sds + \int_{0}^{t}\sigma_s dW_s$ for all $t\in[0,T]$, where $t\mapsto \mu_t$ and $t\mapsto \sigma_t\sigma^\top_t$ are $L^1$-continuous and $\sigma_t\sigma^\top_t$ is invertible for a.e.~$t\geq 0$. Note that in this case, $\mu(X)_t = \mu_t$ and $\sigma\sigma^\top(X)_t = \sigma_t\sigma^\top_t$.
\begin{definition}
	$d^E(X)_t = 0$ if and only if $\mu(X)_t = 0$ and $\sigma\sigma^\top(X)_t = 0$.
\end{definition}

\begin{lemma}\label{lemma: dE implies dp for true martingales}
	$d^E(M)_t = 0 \Rightarrow d^p(M)_t = 0$ for any continuous true martingale $M$.
\end{lemma}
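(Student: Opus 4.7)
The plan is to reduce the conclusion $d^p(M)_t = 0$ to the quadratic-variation characterisation and then exploit the martingale identity $\mathbb{E}[(M_{t,t+h})^2 \mid \mathcal{F}_t] = \mathbb{E}[[M]_{t,t+h} \mid \mathcal{F}_t]$ together with a Markov inequality.

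First I would invoke Lemma \ref{lemma: M has differential zero iff its QV has differential zero}: for a continuous local martingale $M$, showing $d^p(M)_t = 0$ is equivalent to showing $[M]_{t,t+h} = o_p(h)$. So it suffices to prove the latter. Next, because $M$ is a true martingale, $\mu(M)_t = 0$ automatically, so the hypothesis $d^E(M)_t = 0$ reduces to $\sigma\sigma^\top(M)_t = 0$, i.e.\
\begin{equation*}
\frac{1}{h}\,\mathbb{E}\bigl[M_{t,t+h}M_{t,t+h}^\top \,\bigl\lvert\, \mathcal{F}_t\bigr] \xrightarrow[h\to 0^+]{L^1} 0.
\end{equation*}
Since $M$ is a square-integrable true martingale, the process $M_s^2 - [M]_s$ is a martingale (componentwise this becomes an obvious polarisation statement), and hence
\begin{equation*}
\mathbb{E}\bigl[(M_{t,t+h})^2 \,\bigl\lvert\, \mathcal{F}_t\bigr] = \mathbb{E}\bigl[[M]_{t,t+h} \,\bigl\lvert\, \mathcal{F}_t\bigr].
\end{equation*}

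Combining the previous two displays gives $\tfrac{1}{h}\mathbb{E}\bigl[[M]_{t,t+h}\bigl\lvert\mathcal{F}_t\bigr] \to 0$ in $L^1$. Since $[M]_{t,t+h} \geq 0$, taking expectations and using the tower property yields $\tfrac{1}{h}\mathbb{E}\bigl[[M]_{t,t+h}\bigr] \to 0$ as $h \to 0^+$. Then for any $\epsilon > 0$, Markov's inequality gives
\begin{equation*}
\mathbb{P}\!\left([M]_{t,t+h} > \epsilon h\right) \leq \frac{\mathbb{E}\bigl[[M]_{t,t+h}\bigr]}{\epsilon h} \xrightarrow[h\to 0^+]{} 0,
\end{equation*}
which is exactly the statement $[M]_{t,t+h} = o_p(h)$, and hence $d^p(M)_t = 0$ by the cited lemma.

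The only mildly delicate step is the passage from the $L^1$-convergence of the conditional second moment to a probability-level statement on $[M]_{t,t+h}$ itself; this is handled by chaining the tower property with Markov's inequality, exploiting non-negativity. I do not anticipate a real obstacle, since the true-martingale hypothesis is precisely what turns the conditional second moment of the increment into the conditional bracket, and the rest is a routine Markov argument; the hypothesis cannot be loosened to local martingales without further integrability.
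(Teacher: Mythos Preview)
Your proposal is correct and follows essentially the same route as the paper: both arguments use the martingale identity $\mathbb{E}[(M_{t,t+h})^2\mid\mathcal{F}_t]=\mathbb{E}[[M]_{t,t+h}\mid\mathcal{F}_t]$ to convert the hypothesis into a first-moment statement about the bracket, and then apply Markov's inequality to conclude $[M]_{t,t+h}=o_p(h)$. Your write-up is more explicit (spelling out why $\mu(M)_t=0$, invoking the tower property, and citing Lemma~\ref{lemma: M has differential zero iff its QV has differential zero} for the final step), but there is no substantive difference in strategy.
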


\begin{proof}
	We have $\mathbb{E}\left[M_{t,t+h}M_{t,t+h}^\top\right]=\mathbb{E}\left[\lbrack M\rbrack_{t,t+h}\right]$. By the Markov inequality, this implies $\lbrack M\rbrack_{t,t+h} = o_p(h)$.
\end{proof}	

\begin{remark}
	The same is not true of the finite variation part. Indeed, if $A_t = \pm t$ with probability $\nicefrac{1}{2}$, then $\lim_{h\rightarrow 0^+}\frac{1}{h}\mathbb{E}_0[A_{t,t+h}]=0$ but $TV(A)_{0,h}=h\neq o(h)$.
\end{remark}

\begin{theorem}[Uniqueness of solutions]
	If $d^E(X)_t = 0$ for each $t\in[0,T]$, then $X_t=X_0$ for each $t\in[0,T]$.
\end{theorem}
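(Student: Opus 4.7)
The plan is to translate the hypothesis directly through the Itô representation assumed just before the theorem, and then argue the resulting integrals vanish. Recall that $X$ is assumed to be an Itô process $X_t = X_0 + \int_0^t \mu_s\, ds + \int_0^t \sigma_s\, dW_s$ satisfying $\mu(X)_t = \mu_t$ and $\sigma\sigma^\top(X)_t = \sigma_t\sigma_t^\top$. The hypothesis $d^E(X)_t = 0$ for every $t \in [0,T]$ therefore forces, for each such $t$,
\begin{equation*}
\mu_t = 0 \quad \text{in } L^1, \qquad \sigma_t\sigma_t^\top = 0 \quad \text{in } L^1,
\end{equation*}
and hence almost surely.

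The first auxiliary step is to upgrade the vanishing of $\sigma_t\sigma_t^\top$ to vanishing of $\sigma_t$ itself. This uses the identity $\mathrm{tr}(\sigma\sigma^\top) = \|\sigma\|_F^2$: a real matrix-valued random variable $\sigma$ whose Gram matrix is almost surely zero must itself be almost surely zero, because the Gram matrix is positive semidefinite and its trace equals the squared Frobenius norm of $\sigma$. Thus $\sigma_t = 0$ almost surely for each $t \in [0,T]$.

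Next I would use Fubini (justified by the assumed $L^1$-continuity of $t \mapsto \mu_t$ and $t \mapsto \sigma_t\sigma_t^\top$, which provides joint measurability in $(s,\omega)$) to convert pointwise-in-$t$ vanishing into integrated vanishing. For the drift, $\mathbb{E}\bigl[\int_0^t |\mu_s|\, ds\bigr] = \int_0^t \mathbb{E}[|\mu_s|]\, ds = 0$, so $\int_0^t \mu_s\, ds = 0$ almost surely. For the martingale part, after a standard localisation one applies the Itô isometry to obtain $\mathbb{E}\bigl[(\int_0^t \sigma_s\, dW_s)^2\bigr] = \int_0^t \mathbb{E}[|\sigma_s|^2]\, ds = 0$, so the stochastic integral vanishes almost surely as well. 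Substituting back yields $X_t = X_0$ almost surely for each $t \in [0,T]$, and sample-path continuity promotes this to a simultaneous identity on $[0,T]$.

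The argument is direct rather than delicate; the only mild subtlety is checking that the pointwise-in-$t$ null sets assemble correctly into a single null set on $[0,T] \times \Omega$, which is precisely what the $L^1$-continuity hypotheses on $t \mapsto \mu_t$ and $t \mapsto \sigma_t\sigma_t^\top$ are there to guarantee. I do not expect a significant obstacle beyond this bookkeeping.
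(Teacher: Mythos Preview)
Your argument is correct and takes a genuinely different route from the paper's proof. You exploit the standing identification $\mu(X)_t=\mu_t$ and $\sigma\sigma^\top(X)_t=\sigma_t\sigma_t^\top$ to read off $\mu_t=0$ and $\sigma_t=0$ almost surely for each fixed $t$, and then kill the two integrals directly via Fubini and the It\^o isometry (equivalently, by noting that the quadratic variation $\int_0^t\lvert\sigma_s\rvert^2\,ds$ vanishes almost surely, so the local martingale part is identically zero). The paper instead works from the raw definition of the mean forward derivative: a covering argument in the style of Lemma~\ref{lemma: uniqueness of solutions} yields $\lVert\mathbb{E}[X_{s,t}\mid\mathcal{F}_s]\rVert_{L^1}=0$ for all $s\leq t$, so $X$ is a true martingale; the paper then invokes Lemma~\ref{lemma: dE implies dp for true martingales} to pass from $d^E(X)_t=0$ to $d^p(X)_t=0$ and finishes by quoting the $d^p$-uniqueness lemma already proved in Section~\ref{section: stochastic differentials using convergence in probability}. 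Your route is shorter and entirely self-contained; the paper's route is more structural, deliberately exhibiting the $d^E$-theory as reducing to the $d^p$-theory rather than reproving uniqueness from scratch. One small simplification you could make: once $\sigma_s=0$ a.s.\ for each $s$, Fubini gives $[M]_t=\int_0^t\lvert\sigma_s\rvert^2\,ds=0$ a.s.\ directly, and the localisation step becomes unnecessary.
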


\begin{proof}
Let $\{X_t\}$ have its usual canonical decomposition. We first show that $A_t=0$ a.s. for each $t\in[0,T]$. Pick $s\in[0,T]$. By hypothesis, we have that
\begin{equation}
\forall\epsilon>0,~\exists\delta>0 \text{ s.t. }\forall0< h<\delta, ~\left\Vert\mathbb{E}[X_{s,s+h}\big\lvert \mathcal{F}_s]\right\Vert_{L^1}\leq\epsilon h \label{condition 1}
\end{equation}

Fix $[t,s]\subseteq [0,T]$. Using a similar covering as in the proof of Lemma \ref{lemma: uniqueness of solutions}, we can find a finite number of sub-intervals $\{(t_i,s_i)\}_{i=1}^m$ which cover $[s,t]$ such that none of them is contained in another and (\ref{condition 1}) is satisfied ($\delta_i = s_i-t_i$ for each $i=1,\dots,m$). Using the triangle inequality for the $L^1$-norm and the tower property of expectation, find that
\begin{align*}
\left\Vert\mathbb{E}[X_{s,t}\big\lvert \mathcal{F}_s]\right\Vert_{L^1} &\leq \sum_{j=0}^{m-1}\left\Vert\mathbb{E}[X_{s_j,s_{j+1}}\big\lvert \mathcal{F}_s]\right\Vert_{L^1} \\
&\leq \sum_{j=0}^{m-1}\left\Vert\mathbb{E}\Big[\mathbb{E}[X_{s_j,s_{j+1}}\lvert \mathcal{F}_{s_j}]\big\lvert \mathcal{F}_s\Big]\right\Vert_{L^1} \\
&\leq \sum_{j=0}^{m-1}\left\Vert\mathbb{E}[X_{s_j,s_{j+1}}\big\lvert \mathcal{F}_{s_j}]\right\Vert_{L^1} \\
&\leq \epsilon\sum_{j=0}^{m-1}(s_{j+1}-s_j) = \epsilon(t-s).
\end{align*}
As $(t-s)$ is fixed and $\epsilon$ can be chosen arbitrarily small, we conclude that $\left\Vert\mathbb{E}_s[X_{s,t}]\right\Vert_{L^1}=0$. This implies $\mathbb{E}_s[X_t]= X_s$. In other words, $\{X_t\}$ is a true martingale and by Lemmas \ref{lemma: uniqueness of solutions} and \ref{lemma: dE implies dp for true martingales} we are done. 
\end{proof}

\begin{theorem}[Stochastic Fundamental Theorem of Calculus]
	Let $\{X_t\}$ be an It\^o process of the form $X_t = X_0 + \int_{0}^{t}\mu_sds + \int_{0}^{t}\sigma_s dW_s$ for all $t\in[0,T]$ where $t\mapsto \mu_t$ and $t\mapsto \sigma_t\sigma^\top_t$ are $L^1$-continuous. Then, for any sample continuous, uniformly bounded and adapted process $\{H_t\}$, we have
	\begin{equation*}
	d^E\Big(\int_{0}^{\cdot}H_s dX_s\Big)_t = H_t d^E(X)_t, 
	\end{equation*}
	for all $t\in[0,T]$.
\end{theorem}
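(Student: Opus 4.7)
The plan is to reduce the assertion, exactly as in the proof of Theorem \ref{thm: stochastic version of ftc}, to showing that the auxiliary process $Z_s \coloneqq \int_t^s (H_u - H_t)\, dX_u$ satisfies $d^E(Z)_t = 0$. Indeed, $\int_0^s H_u\, dX_u - H_t X_{t,s}$ differs from $Z_s$ only by an $\mathcal{F}_t$-measurable shift, and $d^E$ is linear under multiplication by $\mathcal{F}_t$-measurable random variables because both $\mu(\cdot)_t$ and $\sigma\sigma^\top(\cdot)_t$ are taken conditionally on $\mathcal{F}_t$. Expanding with the form of $X$ gives $Z_s = \int_t^s (H_u - H_t)\mu_u\, du + \int_t^s (H_u - H_t)\sigma_u\, dW_u$; uniform boundedness of $H$ together with the $L^1$-continuity of $\sigma\sigma^\top$ makes the stochastic integral an $L^2$-martingale, so it does not contribute to $\mathbb{E}[Z_{t,t+h}\mid\mathcal{F}_t]$.

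For the mean forward derivative, Jensen and Fubini yield
\begin{equation*}
\left\| \tfrac{1}{h}\mathbb{E}[Z_{t,t+h}\mid\mathcal{F}_t] \right\|_{L^1} \leq \tfrac{1}{h} \int_t^{t+h} \mathbb{E}\bigl|(H_u - H_t)\mu_u\bigr|\, du .
\end{equation*}
I would split $\mu_u = (\mu_u - \mu_t) + \mu_t$: the first piece is handled by $|H_u - H_t| \leq 2K$ (where $K$ is the uniform bound on $H$) and the $L^1$-continuity of $\mu$, while the second piece is handled by the dominated convergence theorem with dominating function $2K|\mu_t|\in L^1$, since $H_u \to H_t$ almost surely by sample continuity. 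This shows $\mu(Z)_t = 0$ in $L^1$.

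For the diffusion coefficient, with $\mu(Z)_t = 0$ the centred increment is $Z_{t,t+h}$ itself, which I write as $A + B$ with $A = \int_t^{t+h}(H_u - H_t)\mu_u\, du$ (finite-variation) and $B = \int_t^{t+h}(H_u - H_t)\sigma_u\, dW_u$ (martingale). The It\^o isometry gives $\mathbb{E}[BB^\top\mid\mathcal{F}_t] = \mathbb{E}\bigl[\int_t^{t+h}(H_u - H_t)(H_u - H_t)^\top\,\sigma_u\sigma_u^\top\, du\, \big|\, \mathcal{F}_t\bigr]$, and a verbatim repeat of the argument above --- now with $\sigma\sigma^\top$ in place of $\mu$ --- shows $\tfrac{1}{h}\mathbb{E}[BB^\top\mid\mathcal{F}_t] \to 0$ in $L^1$. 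For the drift-squared contribution, Cauchy--Schwarz on the time integral gives $|A|^2 \leq h\int_t^{t+h} (H_u - H_t)^2 \mu_u^2\, du$, hence $\tfrac{1}{h}\mathbb{E}|A|^2 \leq \int_t^{t+h} \mathbb{E}[(H_u - H_t)^2 \mu_u^2]\, du \to 0$ by the same dominated-convergence argument. Finally the mixed term is controlled by the elementary bound $2|\mathbb{E}[AB^\top]| \leq \tfrac{1}{h}\mathbb{E}|A|^2 + h\,\mathbb{E}\|B\|^2/h$ and the two estimates just obtained.

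The main obstacle is the drift-squared term $\tfrac{1}{h}\mathbb{E}|A|^2$: the $L^1$-continuity of $\mu$ alone is not strong enough, since my Cauchy--Schwarz step requires $\mu_u^2$ to be locally integrable near $t$. This is implicit in Nelson's setting --- the very definition of $\sigma\sigma^\top(X)_t$ already demands that $X_{t,t+h}$ have finite second moments, and since $\int_t^{t+h}\sigma_u\, dW_u \in L^2$ the drift $\int_t^{t+h}\mu_u\, du$ must also lie in $L^2$, giving the needed local $L^2$-control of $\mu$ by Fubini. With that caveat flagged, the three estimates combine to give $\sigma\sigma^\top(Z)_t = 0$, and therefore $d^E(Z)_t = 0$, which together with the linearity observation in the first paragraph yields the claimed identity.
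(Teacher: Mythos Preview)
Your reduction to showing $d^E(Z)_t = 0$ for $Z_s = \int_t^s (H_u - H_t)\,dX_u$ is exactly the paper's reduction. The paper's argument is much terser because, in the preamble to this section, it has already invoked Nelson's Theorem~11.9 to identify $\mu(\cdot)_t$ and $\sigma\sigma^\top(\cdot)_t$ with the It\^o coefficients whenever those coefficients are $L^1$-continuous. It then simply notes that $Z$ has drift $(H_s-H_t)\mu_s$ and diffusion-squared $(H_s-H_t)\sigma_s\sigma_s^\top(H_s-H_t)^\top$, asserts that uniform boundedness of $H$ keeps these $L^1$-continuous, and evaluates at $s=t$ to obtain $(H_t-H_t)\mu_t = 0$ and $(H_t-H_t)\sigma_t\sigma_t^\top(H_t-H_t)^\top = 0$ in one line. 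Your direct computation of the $L^1$ limits---splitting $\mu_u = (\mu_u-\mu_t)+\mu_t$, applying dominated convergence, and handling the square via the It\^o isometry plus Cauchy--Schwarz---is precisely the verification that underlies that one line, so your proof is a self-contained expansion of the same idea rather than a different route. The $L^2$-integrability caveat you flag for the $\tfrac{1}{h}\mathbb{E}|A|^2$ term is appropriate; the paper's appeal to Nelson already sits in a finite-second-moment framework, so this is not an additional gap introduced by your approach.
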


\begin{proof}
	For $s\geq t$, define the process $Y_s \coloneqq  \int_{0}^{s}H_u dX_u - H_tX_s = \int_{0}^{s}H_{t,u} dX_u$. As $H$ is uniformly bounded, we know that the drift and diffusion of $Y$ are $L^1$-continuous in $t$. Hence, $\mu(Y)_t = (H_t-H_t)\mu_t = 0$ and $\sigma\sigma^\top(Y)_t = (H_t-H_t)\sigma_t\sigma_t^\top(H_t-H_t)^\top = 0$.
\end{proof}

\section{It\^o stochastic differentials using almost sure convergence} \label{section: stochastic differentials using almost sure convergence}
\begin{definition}\label{def: differential zero almost sure case}
	Let $\{X_t\}$ be a continuous semimartingale with canonical decomposition $X_t = X_0 + A_t+M_t$ and such that there exist progressively measurable processes $\mu$ and $\sigma$ with $A = \int_{0}^{\cdot}\mu_sds$ and $\lbrack M\rbrack = \int_{0}^{\cdot}\sigma^2_sds$, $dt\otimes d\mathds{P}$-almost surely. We will write $d^\text{a.s.}(X)_t=0$ if and only if
	\begin{equation*}
	\lim_{h\rightarrow 0^+}\frac{1}{h}\int_{t}^{t+h}\lVert \mu_s\rVert ds = 0 \text{ and }\lim_{h\rightarrow 0^+}\frac{1}{h}\int_{t}^{t+h} \sigma_s\sigma_s^\top ds = 0,
	\end{equation*} 
	where the limit is taken in the $\mathds{P}$-almost sure sense.
\end{definition}

\begin{lemma}
	Let $\{X_t\}$ be a continuous semimartingale as in Definition \ref{def: differential zero almost sure case}. Then $d^\text{a.s.}(X)_t = 0 \Rightarrow d^p(X)_t=0$.
\end{lemma}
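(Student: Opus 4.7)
The plan is to reduce this to the alternative characterisation of the zero differential in terms of total variation and quadratic variation, and then exploit the fact that almost sure convergence implies convergence in probability.

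First, I would observe that because $A = \int_0^{\cdot}\mu_s\,ds$ is absolutely continuous with density $\mu$, its total variation over $[t,t+h]$ has the explicit form
\begin{equation*}
TV(A)_{t,t+h} = \int_t^{t+h}\lVert \mu_s\rVert\,ds.
\end{equation*}
Dividing both sides by $h$, the hypothesis $d^{\text{a.s.}}(X)_t = 0$ says exactly that $h^{-1}TV(A)_{t,t+h} \to 0$ $\mathbb{P}$-almost surely as $h\searrow 0^+$. Since almost sure convergence implies convergence in probability, this gives $TV(A)_{t,t+h} = o_p(h)$.

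Next I would treat the martingale part in the same manner. By hypothesis $[M] = \int_0^{\cdot}\sigma_s\sigma_s^\top\,ds$, so
\begin{equation*}
[M]_{t,t+h} = \int_t^{t+h}\sigma_s\sigma_s^\top\,ds,
\end{equation*}
and the second limit in the definition of $d^{\text{a.s.}}(X)_t = 0$ is precisely the almost sure statement that $h^{-1}[M]_{t,t+h}\to 0$. Promoting this to convergence in probability yields $[M]_{t,t+h} = o_p(h)$.

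Finally, combining these two facts with the alternative characterisation of $d^p(X)_t = 0$ (Lemmas \ref{lemma: A has differential 0 iff total variation is little op h} and \ref{lemma: M has differential zero iff its QV has differential zero}) immediately gives $d^p(X)_t = 0$. I do not anticipate any obstacle here; the proof is essentially a direct translation between two asymptotic regimes, and the only ingredient beyond the definitions is the standard implication "a.s. $\Rightarrow$ in probability", together with the absolute continuity structure built into Definition \ref{def: differential zero almost sure case} which makes $TV(A)$ and $[M]$ explicit Lebesgue integrals.
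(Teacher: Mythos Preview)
Your proposal is correct and follows essentially the same route as the paper's proof: use the absolute continuity of $A$ and $[M]$ to identify the limits in Definition~\ref{def: differential zero almost sure case} with $h^{-1}TV(A)_{t,t+h}$ and $h^{-1}[M]_{t,t+h}$, then pass from almost sure convergence to convergence in probability and invoke the alternative characterisation of $d^p(X)_t=0$. The paper's version is terser and writes $TV(A)_{t,t+h}\leq \int_t^{t+h}\lVert\mu_s\rVert\,ds$ rather than equality, but your observation that equality holds is correct and the argument is otherwise identical.
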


\begin{proof}
As almost-sure convergence implies convergence in probability, we have that $TV(A)_{t,t+h}\leq \int_{t}^{t}\lVert \mu_s\rVert ds=o_p(h)$. Similarly, $\lbrack M\rbrack_{t,t+h} \leq \int_{t}^{t+h}\sigma_s\sigma^\top_s ds = o_p(h)$.
\end{proof}

\begin{corollary}
	Let $\{X_t\}$ be a continuous semimartingale of the form above. If $d^\text{a.s.}(X)_t = 0$ for each $t\in[0,T]$, then $X=X_0$, $dt\otimes d\mathds{P}$-almost surely.
\end{corollary}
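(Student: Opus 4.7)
The plan is to chain the two preceding results together: the lemma immediately above already gives us the implication $d^{\text{a.s.}}(X)_t = 0 \Rightarrow d^p(X)_t = 0$ at any fixed time $t$, and Lemma \ref{lemma: uniqueness of solutions} tells us that a semimartingale whose $d^p$ differential vanishes throughout $[0,T]$ must be constant in time. So the corollary should follow by transporting the hypothesis from the almost-sure setting into the in-probability setting and then invoking uniqueness.

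More precisely, I would proceed as follows. First, fix an arbitrary $t \in [0,T]$. By assumption $d^{\text{a.s.}}(X)_t = 0$, so the preceding lemma applies and yields $d^p(X)_t = 0$. Since $t$ was arbitrary, $d^p(X)_t = 0$ for every $t \in [0,T]$. Next, apply Lemma \ref{lemma: uniqueness of solutions} directly to $X$: this gives the pointwise-in-time conclusion $X_t = X_0$ almost surely for every $t \in [0,T]$. Finally, by Fubini, a.s.-equality in $t$ for every $t$ implies equality $dt \otimes d\mathbb{P}$-almost surely, which is the stated conclusion (indeed, weaker than what we have actually proved).

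There is no real obstacle here; the work was done in the previous lemma and in Lemma \ref{lemma: uniqueness of solutions}. The only point worth checking is that the hypothesis of Lemma \ref{lemma: uniqueness of solutions} is genuinely met: namely, that the canonical decomposition $X = X_0 + A + M$ of $X$ with $A = \int_0^\cdot \mu_s ds$ and $[M] = \int_0^\cdot \sigma_s^2 ds$ lies in $\mathcal{S}^{\text{CBP}}_t$ for each $t \in [0,T]$. But this is automatic once $d^p(X)_t = 0$ holds, by the lemma showing $d^p(X)_t = 0 \Rightarrow X \in \mathcal{S}^{\text{CBP}}_t$. So the proof collapses to one line invoking the two lemmas in sequence.
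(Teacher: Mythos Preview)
Your proposal is correct and matches the paper's proof exactly: the paper simply writes ``The result follows Lemma \ref{lemma: uniqueness of solutions},'' relying implicitly on the preceding lemma to pass from $d^{\text{a.s.}}(X)_t=0$ to $d^p(X)_t=0$. Your added remarks about Fubini and the $\mathcal{S}^{\text{CBP}}_t$ hypothesis are fine but not needed beyond what the paper already assumes.
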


\begin{proof}
	The result follows Lemma \ref{lemma: uniqueness of solutions}.
\end{proof}

While we do not prove a Fundamental Theorem of Calculus, instead we get an analogue to the Lebesgue differentiation theorem,

\begin{lemma}
	Let $\{X_t\}$ be of the form above. For a sample continuous, adapted process $\{H_t\}$, we have that, for almost all $\omega\in\Omega$,
	\begin{equation*}
	d^\text{a.s.}\Big(\int_{0}^{\cdot}H_s(\omega) dX_s(\omega)\Big)_t = H_t(\omega) d^\text{a.s.}\big(X(\omega)\big)_t, 
	\end{equation*}
	for almost all $t\in[0,T]$.
\end{lemma}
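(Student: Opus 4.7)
The plan is to reduce the statement to a pathwise Lebesgue differentiation argument. By the equivalence-class interpretation of the differential, the claimed identity at time $t$ is equivalent to $d^{\text{a.s.}}(Z^{(t)})_t = 0$, where $Z^{(t)}_s := \int_0^s H_u\,dX_u - H_t X_s$ is viewed as a process in $s \geq t$. Writing $dX_s = \mu_s\,ds + \sigma_s\,dW_s$ and subtracting, for $s \geq t$ we obtain
\begin{equation*}
Z^{(t)}_s - Z^{(t)}_t = \int_t^s (H_u - H_t)\,\mu_u\,du + \int_t^s (H_u - H_t)\,\sigma_u\,dW_u,
\end{equation*}
so that $Z^{(t)}$ has drift density $(H_u - H_t)\mu_u$ and diffusion coefficient $(H_u - H_t)\sigma_u$ on $[t,\infty)$. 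According to Definition \ref{def: differential zero almost sure case}, I need to show, for $\mathbb{P}$-a.e.\ $\omega$ and for Lebesgue-a.e.\ $t \in [0,T]$,
\begin{equation*}
\lim_{h \to 0^+}\frac{1}{h}\int_t^{t+h}|H_u - H_t|\,|\mu_u|\,du = 0 \quad\text{and}\quad \lim_{h \to 0^+}\frac{1}{h}\int_t^{t+h}(H_u - H_t)^2\sigma_u^2\,du = 0.
\end{equation*}

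The key step is to first fix $\omega$ in the full-measure set on which $\int_0^T (|\mu_s(\omega)| + \sigma_s(\omega)^2)\,ds < \infty$ and on which $s \mapsto H_s(\omega)$ is continuous; this is a $\mathbb{P}$-null exclusion. The classical Lebesgue differentiation theorem then supplies a set $\Lambda(\omega) \subseteq [0,T]$ of full Lebesgue measure such that for every $t \in \Lambda(\omega)$,
\begin{equation*}
\frac{1}{h}\int_t^{t+h}|\mu_s(\omega)|\,ds \xrightarrow[h \to 0^+]{} |\mu_t(\omega)|, \qquad \frac{1}{h}\int_t^{t+h}\sigma_s(\omega)^2\,ds \xrightarrow[h \to 0^+]{} \sigma_t(\omega)^2.
\end{equation*}
For such $t$, the bounds
\begin{equation*}
\frac{1}{h}\int_t^{t+h}|H_u - H_t|\,|\mu_u|\,du \leq \Big(\sup_{u \in [t,t+h]}|H_u - H_t|\Big)\cdot\frac{1}{h}\int_t^{t+h}|\mu_u|\,du,
\end{equation*}
and the analogous inequality with $(H_u - H_t)^2\sigma_u^2$ in place of $|H_u - H_t||\mu_u|$, yield the desired limits because the supremum factor vanishes by sample continuity of $H$ at $t$, while the integral factor stays bounded (it converges to $|\mu_t(\omega)|$ or $\sigma_t(\omega)^2$). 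This gives $d^{\text{a.s.}}(Z^{(t)})_t = 0$ for every $t \in \Lambda(\omega)$, which is the claim.

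The only delicate point is a bookkeeping one: I need to confirm that the exceptional set can be organised as ``$\mathbb{P}$-a.e.\ $\omega$, then Lebesgue-a.e.\ $t$'' rather than the other way around. This is fine because $\Lambda(\omega)$ is produced by applying the deterministic Lebesgue differentiation theorem to the fixed integrable path $s \mapsto (|\mu_s(\omega)|, \sigma_s(\omega)^2)$ \emph{after} freezing $\omega$. No Fubini-type argument is required and no joint measurability of $\Lambda(\omega)$ is needed in the statement. I do not expect any serious obstacle beyond writing these ingredients carefully; the only item that could warrant a sentence of caveat is that $H_t$ is not adapted to $\mathcal{F}_u$ for $u < t$, but this is irrelevant because the differential at $t$ only examines the process $Z^{(t)}$ on $[t,\infty)$, where the decomposition used above is legitimate.
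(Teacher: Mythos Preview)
Your proposal is correct and follows exactly the approach the paper indicates: the paper's proof is the single sentence ``This follows from the Lebesgue differentiation theorem applied pathwise,'' and you have simply unpacked what that sentence means by fixing $\omega$, invoking the Lebesgue differentiation theorem for $|\mu_\cdot(\omega)|$ and $\sigma_\cdot(\omega)^2$, and combining with the sample continuity of $H$. Your discussion of the order of the exceptional sets and of the adaptedness caveat is sound and goes slightly beyond what the paper writes, but the underlying idea is identical.
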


\begin{proof}
	This follows from the Lebesgue differentiation theorem applied pathwise.
\end{proof}

\appendix
\section{Proof of results} \label{appendix: proof of results}
We could not find proofs of the next twoo results in the literature, but do not claim they are new.
\begin{lemma}[Results using total and quadratic variation do not depend on choice of norm in their definition] \label{lemma: tv and qv definitions do not depend on choice of norm}
	Let $X$ be a continuous semimartingale with canonical decomposition $X_t = X_0 + A_t + M_t$ be a continuous semimartingale on $\mathbb{R}^m$. Suppose that $q_1$ and $q_2$ are norms on $\mathbb{R}^m$. Let $TV(A)_{t,t+h;q_i}$ and $\lbrack X\rbrack_{t,t+h;q_i}$ denote the total and quadratic variation processes over the interval $[t,t+h]$ where the definition uses the norm $q_i$, $i=1,2$. Then we have that
	\begin{equation*}
		TV(A)_{t,t+h;q_1} + \lbrack X\rbrack_{t,t+h;q_1} = \mathcal{O}_p(h) \iff TV(A)_{t,t+h;q_2} + \lbrack X\rbrack_{t,t+h;q_2} = \mathcal{O}_p(h).
	\end{equation*} 
	Moreover, 
	\begin{equation*}
		TV(A)_{t,t+h;q_1} + \lbrack X\rbrack_{t,t+h;q_1} = o_p(h) \iff TV(A)_{t,t+h;q_2} + \lbrack X\rbrack_{t,t+h;q_2} = o_p(h).
	\end{equation*} 
\end{lemma}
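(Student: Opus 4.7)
The plan is to exploit the equivalence of norms on $\mathbb{R}^m$. Because any two norms $q_1$ and $q_2$ on a finite-dimensional vector space are equivalent, there exist positive constants $c_1, c_2 > 0$ with $c_1 q_1(x) \leq q_2(x) \leq c_2 q_1(x)$ for every $x \in \mathbb{R}^m$. Squaring gives $c_1^2 q_1(x)^2 \leq q_2(x)^2 \leq c_2^2 q_1(x)^2$. Both double inequalities pass through finite sums, so for any partition $\pi$ of $[t,t+h]$ one has
\begin{equation*}
c_1 \sum_{(t_i,t_{i+1})\in\pi} q_1(A_{t_i,t_{i+1}}) \leq \sum_{(t_i,t_{i+1})\in\pi} q_2(A_{t_i,t_{i+1}}) \leq c_2 \sum_{(t_i,t_{i+1})\in\pi} q_1(A_{t_i,t_{i+1}}),
\end{equation*}
and analogously for the squared-norm sums defining quadratic variation.

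The next step is to pass these inequalities to the limit in probability. If $U_n \leq V_n$ almost surely and $U_n \to U$, $V_n \to V$ in probability, then $U \leq V$ almost surely: extract a subsequence along which both converge almost surely and apply the inequality pointwise. Applying this to the partition sums above (taken along a sequence of partitions with mesh tending to zero) yields
\begin{equation*}
c_1\, TV(A)_{t,t+h;q_1} \leq TV(A)_{t,t+h;q_2} \leq c_2\, TV(A)_{t,t+h;q_1},
\end{equation*}
and
\begin{equation*}
c_1^2\, [X]_{t,t+h;q_1} \leq [X]_{t,t+h;q_2} \leq c_2^2\, [X]_{t,t+h;q_1}.
\end{equation*}

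From here the conclusion is immediate. The asymptotic classes $\mathcal{O}_p(h)$ and $o_p(h)$ are preserved under multiplication by positive constants, by Lemma \ref{lemma: properties of little and big o nonation in probability}. Adding the total variation and quadratic variation bounds and using these invariances gives the two stated equivalences. The only step with any content is the passage to the limit under an inequality, which is handled by the standard subsequence-a.s.\ argument; the rest is linearity of sums and invariance of $o_p$ and $\mathcal{O}_p$ under positive scaling, so I do not anticipate a genuine obstacle.
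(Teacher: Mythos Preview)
Your proof is correct and follows essentially the same route as the paper: equivalence of norms gives the sandwich inequalities on the partition sums, these pass to the limits defining $TV(A)$ and $[X]$, and the conclusion follows from the scaling-invariance of $o_p$ and $\mathcal{O}_p$. Your explicit subsequence argument for preserving inequalities under limits in probability is a small added detail that the paper leaves implicit, but otherwise the arguments coincide.
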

\begin{proof}
	As $q_1$ and $q_2$ are equivalent on $\mathbb{R}^m$, there exist constants $a,b>0$ such that for each $\bm x \in\mathbb{R}^m$, $a\cdot q_1(\bm x) \leq q_2(\bm x) \leq b\cdot q_1(\bm x)$. By the definition of total and quadratic variations, we deduce that
	\begin{equation*}
		a\cdot TV(A)_{t,t+h;q_1} \leq TV(A)_{t,t+h;q_2}\leq b\cdot TV(A)_{t,t+h;q_1}
	\end{equation*}
	and
	\begin{equation*}
		a^2\cdot \lbrack X\rbrack_{t,t+h;q_1} \leq \lbrack X \rbrack_{t,t+h;q_2} \leq b^2 \cdot \lbrack X\rbrack_{t,t+h;q_1}.
	\end{equation*}
	By the properties of $o_p(\cdot)$ and $\mathcal{O}_p(\cdot)$, we deduce the results.
\end{proof}

\begin{lemma}[All results are true if and only if they hold component-wise]\label{lemma: all results are true iff they hold component wise}
	Let $X:[0,\infty)\times\Omega \rightarrow \mathbb{R}^m$ be a continuous semimartingale. Then
	\begin{equation*}
		X\in\mathcal{S}^\text{CBP}_t(\mathbb{R}^m) \iff X^{(i)}\in\mathcal{S}^\text{CBP}_t(\mathbb{R}),~\forall i=1,\dots,m.
	\end{equation*}
	Moreover, $d^p(X)_t = 0$ on $\mathbb{R}^m$ if and only if $d^p\left(X^i\right)_t = 0$ on $\mathbb{R}$, for all $i=1,\dots,m$.
\end{lemma}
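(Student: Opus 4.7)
The plan is to reduce everything to the VBP characterisation (using Corollary~\ref{lemma: VBP is equivalent to CBP}, which identifies $\mathcal{S}^\text{CBP}_t$ with $\mathcal{S}^\text{VBP}_t$), because VBP is phrased directly in terms of total and quadratic variation, which interact transparently with the Euclidean norm on $\mathbb{R}^m$. First I would observe that if $X = X_0 + A + M$ is the canonical decomposition of the $\mathbb{R}^m$-valued semimartingale $X$, then applying it component-wise gives $X^{(i)} = X^{(i)}_0 + A^{(i)} + M^{(i)}$, which is the canonical decomposition of the real-valued semimartingale $X^{(i)}$ (uniqueness of the decomposition forces this).

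Next I would write down the elementary inequalities on $\mathbb{R}^m$, namely for any $v\in\mathbb{R}^m$,
\begin{equation*}
|v^{(i)}| \leq \lVert v\rVert_2 \leq \sum_{j=1}^m |v^{(j)}|, \qquad \lVert v\rVert_2^2 = \sum_{j=1}^m (v^{(j)})^2,
\end{equation*}
and plug them into the defining Riemann sums for $TV(A)$ and $[M]$. This yields, for every $h>0$,
\begin{equation*}
TV(A^{(i)})_{t,t+h} \leq TV(A)_{t,t+h} \leq \sum_{j=1}^m TV(A^{(j)})_{t,t+h}, \qquad [M]_{t,t+h} = \sum_{j=1}^m [M^{(j)}]_{t,t+h},
\end{equation*}
where the second identity is the standard fact that quadratic variation is additive across independent components of the Euclidean norm. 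In particular $[M^{(i)}]_{t,t+h} \leq [M]_{t,t+h} \leq \sum_j [M^{(j)}]_{t,t+h}$ as well.

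From here the argument is routine bookkeeping with Lemma~\ref{lemma: properties of little and big o nonation in probability}. If each $X^{(i)}\in\mathcal{S}^\text{VBP}_t(\mathbb{R})$, then $TV(A^{(i)})_{t,t+h}$ and $[M^{(i)}]_{t,t+h}$ are each $\mathcal{O}_p(h)$, so the finite sums are $\mathcal{O}_p(h)$, hence $TV(A)_{t,t+h}$ and $[M]_{t,t+h}$ are $\mathcal{O}_p(h)$, giving $X\in\mathcal{S}^\text{VBP}_t(\mathbb{R}^m)$. Conversely, if $X\in\mathcal{S}^\text{VBP}_t(\mathbb{R}^m)$, the left-hand inequalities above show immediately that each component inherits VBP. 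Applying the same two-sided inequalities but now with $o_p(h)$ in place of $\mathcal{O}_p(h)$, together with the alternative characterisation of the zero differential (Lemma~\ref{lemma: A has differential 0 iff total variation is little op h} combined with Lemma~\ref{lemma: M has differential zero iff its QV has differential zero}), yields the componentwise characterisation of $d^p(X)_t = 0$.

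The only place where one has to be mildly careful is in justifying that the definitions really are norm-independent in the sense used above; this is precisely the content of Lemma~\ref{lemma: tv and qv definitions do not depend on choice of norm}, which I would cite to dispense with any worry about the mismatch between the $\max$-of-components definition of $X^*_{t,s}$ in the $\mathbb{R}^m$-valued CBP definition and the Euclidean norm used in the VBP definition. No deep idea is required; the main (very small) obstacle is simply remembering to funnel everything through VBP rather than trying to compare componentwise suprema of martingale parts directly, which would be awkward without first invoking the good-$\lambda$ machinery.
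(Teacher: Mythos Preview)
Your proposal is correct and follows essentially the same route as the paper: both funnel through the VBP characterisation, use the norm comparison $|v^{(i)}|\leq \lVert v\rVert_2\leq \lVert v\rVert_1$ to sandwich $TV(A)_{t,t+h}$ between componentwise total variations, use the additivity $[M]_{t,t+h}=\sum_j[M^{(j)}]_{t,t+h}$, and conclude via the calculus of $o_p$ and $\mathcal{O}_p$. Your version is if anything a little more explicit in citing Corollary~\ref{lemma: VBP is equivalent to CBP}, Lemma~\ref{lemma: tv and qv definitions do not depend on choice of norm}, and the alternative characterisation of the zero differential, but the substance is identical.
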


\begin{proof}
	The proof essentially relies on the following inequality between $p$-norms on $\mathbb{R}^m$: for $\bm x\in\mathbb{R}^m$,
	\begin{equation*}
		\lVert \bm x\rVert_2 \leq \lVert \bm x\rVert_1 \leq \sqrt{m}\cdot \lVert \bm x\rVert_2.
	\end{equation*}
	This implies that for any $\mathbb{R}^m$-valued continuous semimartingale $X$ with decomposition $X=X_0 + A+M$, we have
	\begin{equation*}
		TV(A)_{t,t+h} \leq \sum_{i=1}^{m}TV(A^i)_{t,t+h} \leq \sqrt{m}\cdot \sum_{i=1}^{m}TV(A)_{t,t+h}.
	\end{equation*}
	Moreover, $\lbrack X\rbrack_{t,t+h} = \sum_{i=1}^{m}\lbrack X^i\rbrack_{t,t+h}$, when these quantities exist. By the properties of $o_p(\cdot)$ and $\mathcal{O}_p(\cdot)$ we conclude the result. 
\end{proof}

\begin{proof}[Proof of Example \ref{ex: ito processes have CBP}]
	Suppose first that $\mu$ is continuous with probability one at $t$. By Lemma \ref{lemma: sup of continuous process is op1}
	\begin{multline*}
		\sup_{s\in(t,t+h)}\left\lvert \int_{t}^{t+h}\mu_u du \right\rvert \leq \sup_{s\in(t,t+h)}\int_{t}^{s}\lvert \mu_u\rvert du \\
		\leq \sup_{s\in(t,t+h)}\left\{\sup_{u\in(t,s)}\lvert \mu_u\rvert (s-t)\right\} \leq \sup_{u\in(t,t+h)}\lvert\mu_u\rvert\cdot h = \mathcal{O}_p(1_h)\cdot h = \mathcal{O}_p(h).
	\end{multline*}
Similarly, if $\sigma$ is continuous with probability one at $t$ then
\begin{equation*}
	\left\lbrack \int_{t}^{\cdot}\sigma_udW_u \right\rbrack_{t,t+h} = \int_{t}^{t+h}\sigma^2_udu \leq \sup_{u\in(t,t+h)}\sigma_u^2\cdot h = \mathcal{O}_p(h).
\end{equation*}
Hence by Lemma \ref{lemma: CBP is equivalent to vbp for local martingales} we deduce that $\sup_{s\in(t,t+h)}\left( \int_{t}^{s}\sigma_udW_u \right)^2 = \mathcal{O}_p(h)$.

Suppose instead that $\mu$ is $L^1$-continuous at $t$. Then for each $\epsilon>0$, there exists some $\eta=\eta(\epsilon)>0$ such that $\sup_{u\in(t,t+\eta)}\mathbb{E}\left[\lvert \mu_{t,u}\rvert\right]<\epsilon$. By the triangle inequality this means that
\begin{equation*}
	\sup_{u\in(t,t+\eta)}\mathbb{E}\left[\lvert \mu_u\rvert\right] < \epsilon + \mathbb{E}\left[\lvert \mu_t\rvert\right].
\end{equation*}
Using Markov's inequality, this means that, for all $h\in(0,\eta)$,
\begin{multline*}
	\mathds{P}\left(\sup_{s\in(t,t+h)}\left\lvert \int_{t}^{s}\mu_u du \right\rvert\geq Kh\right) \leq \mathds{P}\left( \int_{t}^{t+h}\lvert \mu_u\rvert du \geq Kh\right) \\
	\leq \frac{1}{Kh}\mathbb{E}\left[ \int_{t}^{t+h}\lvert\mu_u\rvert du\right] 
	\leq \frac{1}{K}\sup_{u\in(t,t+h)}\mathbb{E}\left[\lvert \mu_u\rvert\right]<\frac{\epsilon+\mathbb{E}\left[\lvert \mu_t\rvert\right]}{K}.
\end{multline*}
Picking $K = 1 + \frac{\mathbb{E}\left[\lvert \mu_t\rvert\right]}{\epsilon}$ makes the right-hand side less than $\epsilon$ for any $h\in(0,\eta)$ which proves that $\sup_{s\in(t,t+h)}\left\lvert\int_{t}^{s}\mu_u du \right\rvert = \mathcal{O}_p(h)$ as required. 

Suppose that $\sigma$ were $L^2$-continuous at $t$. Then for any $\epsilon>0$, there exists $\eta=\eta(\epsilon)>0$ such that $\sup_{u\in(t,t+\eta)}\mathbb{E}\left[\left\lvert \sigma_{t,u}\right\rvert^2\right] < \epsilon$. By the triangle inequality for $L^2$ norms, 
\begin{equation*}
	\sup_{u\in(t,t+\eta)}\mathbb{E}\left[ \sigma_u^2\right]^{\nicefrac{1}{2}} < \epsilon^{\nicefrac{1}{2}}+\mathbb{E}\left[\sigma_t^2\right]^{\nicefrac{1}{2}}.
\end{equation*} 
By Markov's inequality and the It\^o isometry, this means that for all $h\in(0,\eta)$,
\begin{multline*}
	\mathds{P}\left(\left\lbrack\int_{t}^{\cdot}\sigma_u dW_u\right\rbrack_{t,t+h}\geq Kh\right) \leq \frac{1}{Kh}\mathbb{E}\left[\int_{t}^{t+h}\sigma^2_u du\right] \\
	\leq \frac{1}{K}\sup_{u\in(t,t+h)}\mathbb{E}\left[\sigma^2_u\right]< \frac{1}{K}\left(\epsilon^{\nicefrac{1}{2}}+\mathbb{E}\left[\sigma^2_t\right]^{\nicefrac{1}{2}}\right)^2.
\end{multline*}
Picking $K = \nicefrac{\epsilon}{\left(\epsilon^{\nicefrac{1}{2}}+\mathbb{E}\left[\sigma^2_t\right]^{\nicefrac{1}{2}}\right)^2}$ makes the right-hand side less than $\epsilon$ for all $h\in(0,\eta)$, which proves that $\left\lbrack\int_{t}^{\cdot}\sigma_u dW_u\right\rbrack_{t,t+h} = \mathcal{O}_p(h)$. By Lemma \ref{lemma: CBP is equivalent to vbp for local martingales} this implies that $\sup_{s\in(t,t+h)}\left(\int_{t}^{s}\sigma_udW_u\right)^2 = \mathcal{O}_p(h)$ as required.
\end{proof}

\section{Useful pathological examples} \label{appendix: useful examples}

We give an example of a continuous function $f$ of bounded variation which satisfies that $\frac{f(h)}{h}\rightarrow 0$ and yet $\frac{TV(f)_{h}}{h}$ is unbounded as $h\rightarrow 0^+$.

\begin{example} \label{ex: pathological example}
	Choose some $(\alpha,\beta)$ in the set
	\begin{equation*}
		\left\{ (x,y)\in\mathbb{R}^2: y>0,~2x+y < -1 < x + y \right\}.
	\end{equation*}
	For instance, $(\alpha,\beta)=(-2,2)$ will do. Construct $f:[0,\infty)\rightarrow [0,\infty)$ as follows:
	\begin{enumerate}[label=(\roman*), itemsep=0pt, topsep=0pt]
		\item $f(0)=0$,
		\item $0 \leq f(t)\leq t^2$, for all $0\leq t\leq 1$,
		\item $f(t) = 0$, for all $t\geq 1$,
		\item The set $\{x_n\}_{n\geq 1}\coloneqq \{n^\alpha\}_{n\geq 1}$ is a set of roots with $x_{n+1} < x_n$, $\forall n\geq 1$,
		\item On $(x_{n+1},x_n)$, there are $\omega_n\coloneqq \lfloor n^\beta\rfloor$ points at which $f(t)=t^2$ and $(\omega_n-1)$ roots in the open interval.
	\end{enumerate}
	It is clear that $0\leq \frac{f(h)}{h}\leq h$, $\forall h>0$. Hence $\frac{f(h)}{h}\rightarrow 0$ as $h\rightarrow 0^+$. It remains to show that $f$ is of bounded variation and that $\frac{TV(f)_h}{h}$ is unbounded as $h\rightarrow 0$. Over any of the intervals $(x_{n+1},x_n)$, we have $2\omega_n x^2_{n+1} < TV(f)_{x_{n+1},x_n} < 2\omega_n x^2_n$.
	Hence,
	\begin{equation*}
		TV(f)_{1} < 2\sum_{m\geq 1} \omega_m x^2_m \text{, and }
		\frac{1}{x_n}TV(f)_{x_n} > \frac{2}{x_n}\sum_{m\geq n}\omega_m x^2_{n+1}. 
	\end{equation*}
	By definition of the floor function, $\lfloor n^\beta\rfloor \leq n^\beta$, so that $TV(f)_{1} < 2\sum_{m\geq 1}m^{2\alpha+\beta}$.
	The right-hand side converges to a finite value since $2\alpha + \beta < -1$. This shows that $f$ is of bounded variation. On the other hand, $\beta>0$, which implies that
	\begin{equation*}
		(m+1)^\beta > m^\beta \text{ and } (m+1)^\beta - \lfloor (m+1)^\beta \rfloor \leq 1.
	\end{equation*}
	This implies that $\lfloor (m+1)^\beta \rfloor \geq (m+1)^\beta - 1 > m^\beta -1$ and thus
	\begin{equation*}
		\frac{1}{x_n}TV(f)_{x_n} > \frac{2}{n^\alpha}\sum_{m\geq n} m^{2\alpha} \lfloor (m+1)\rfloor^\beta > \frac{2}{n^\alpha}\sum_{m\geq n}(m^{2\alpha+\beta}-m^{2\alpha}).
	\end{equation*}
	By looking at the graphs of the functions $y=x^{2\alpha+\beta}$ and $y=x^{2\alpha}$, we can bound the terms above by integrals.
	\begin{align*}
		\sum_{m\geq n}m^{2\alpha+\beta} &> \int_{n}^{\infty}x^{2\alpha+\beta}dx = -\frac{1}{2\alpha+\beta+1}n^{2\alpha+\beta+1}, \intertext{and}
		\sum_{m\geq n}m^{2\alpha} &< \int_{n-1}^{\infty}x^{2\alpha} dx = -\frac{1}{2\alpha+1}(n-1)^{2\alpha+1}.
	\end{align*}
	Hence,
	\begin{align*}
		\frac{1}{x_n}TV(f)_{x_n} &> \frac{2}{n^\alpha}\left(-\frac{1}{2\alpha+\beta+1}n^{2\alpha+\beta+1} + \frac{1}{2\alpha+1}(n-1)^{2\alpha+1}\right) \\
		&= -\frac{2}{2\alpha+\beta+1}n^{\alpha+\beta+1} + \frac{1}{2\alpha+1}\left(\frac{n-1}{n}\right)^\alpha (n-1)^{\alpha+1}.
	\end{align*}
	As $2\alpha+\beta+1<0<\alpha+\beta+1$, the right hand side diverges as $n\rightarrow \infty$. Hence $\frac{TV(f)_h}{h}$ is unbounded as $h\rightarrow 0^+$.
\end{example}

\printbibliography

\end{document}